\newcommand{\ove}{\overline}
\newcommand{\what}{\widehat}
\newcommand{\sbs}{\subseteq}
\newcommand{\sps}{\supseteq}
\newcommand{\pa}[1]{\left(#1\right)}
\DeclareFontFamily{U}{shuffle}{}
\DeclareFontShape{U}{shuffle}{m}{n}{%
<5-8>shuffle7%
<8->shuffle10
}{}
\DeclareSymbolFont{Shuffle}{U}{shuffle}{m}{n}
\DeclareMathSymbol\shuffle{\mathbin}{Shuffle}{"001} \DeclareMathSymbol\cshuffle{\mathbin}{Shuffle}{"002}
\newlist{casesp}{enumerate}{3} 
\setlist[casesp]{align=left, 
                 listparindent=\parindent, 
                 parsep=\parskip, 
                 font=\normalfont\bfseries, 
                 leftmargin=0pt, 
                 labelwidth=0pt, 
                 itemindent=.4em,labelsep=.4em, 
                 partopsep=0pt, 
                 }
\setlist[casesp,1]{label=Case~\arabic*:,ref=\arabic*}
\setlist[casesp,2]{label=Case~\thecasespi.\arabic*:,ref=\thecasespi.\arabic*}
\setlist[casesp,3]{label=Case~\thecasespii.\arabic*:,ref=\thecasespii.\arabic*}
\DeclareMathOperator{\del}{del}
\DeclareMathOperator{\sh}{sh}
\numberwithin{equation}{section}
\newcommand{\Z}{\mathbb Z} 
\newcommand{\sm}{\setminus} 
\newcommand{\Fcal}{\mathcal F} 
\newcommand{\Hc}{\mathcal H} 
\newcommand{\R}{\mathbb R} 
\newcommand{\bfx}{\mathbf x}
\DeclareMathOperator{\Des}{Des}
\DeclareMathOperator{\SSYT}{SSYT}
\DeclareMathOperator{\Sym}{Sym}
\DeclareMathOperator{\QSym}{QSym}
\DeclareMathOperator{\SYT}{SYT}
\DeclareMathOperator{\std}{std}
\theoremstyle{plain} 
\newtheorem{thm}{Theorem}[section]
\newtheorem{prop}[thm]{Proposition} 
\newtheorem{lm}[thm]{Lemma}
\newtheorem{cor}[thm]{Corollary}
\newtheorem{ex}[thm]{Example}
\theoremstyle{definition} 
\newtheorem{defn}[thm]{Definition} 
\newtheorem{rmk}[thm]{Remark} 
\DeclarePairedDelimiter\abs{\lvert}{\rvert}
\DeclarePairedDelimiter\floor{\lfloor}{\rfloor}
\DeclarePairedDelimiter\ceil{\lceil}{\rceil}
\title{On symmetric pattern avoidance sets}
\author{Tuong Le}
\address[Le]{
Dept.\ of Mathematics\\
   Princeton University\\
   Princeton, NJ 08544, USA
}
\email{tl0101@princeton.edu}
\date{\today}
\begin{document}

\begin{abstract}
    For a set of permutations $S\sbs S_n$, consider the quasisymmetric generating function 
    \[Q(S): = \sum_{w\in S}F_{n, \Des(w)},\]
    where $\Des(w) := \{i\mid w(i)> w(i+1)\}$ is the descent set of $w$ and $F_{n, \Des(w)}$ is Gessel's fundamental quasisymmetric function. A set of permutations is said to be symmetric (respectively, Schur-positive) if its quasisymmetric generating function is symmetric (respectively, Schur-positive). Given a set $\Pi$ of permutations, let $S_n(\Pi)$ denote the set of permutations in $S_n$ that avoid all patterns in $\Pi.$ A set $\Pi$ is said to be symmetrically avoided (respectively, Schur-positively avoided) if $S_n(\Pi)$ is symmetric (respectively, Schur-positive) for all $n.$

    Marmor proved in 2025 that for $n\ge 5$, a symmetric set $S\sbs S_n$ has size at least $n-1$ unless $S\sbs \{12\cdots n, n\cdots 21\}$ and asked for a general classification of the possible sizes of symmetric sets not containing the monotone elements $12\cdots n $ and $n\cdots 21$. We give a complete answer to this question for $n\ge 52.$ 
    We also give a classification of symmetric sets of size at most $n-1$, thereby showing that they are actually Schur-positive, resolving a conjecture of Marmor. 
    Finally, we give a classification of symmetrically avoided sets of size at most $n-1$, thereby showing that they are actually Schur-positively avoided.
\end{abstract}

\maketitle

\section{Introduction}
For a set of permutations $S\subseteq S_n$, consider the quasisymmetric generating function 
\begin{equation}
    Q(S):= \sum_{w\in S}F_{n, \Des(w)},
\end{equation}
where $\Des(w) := \{i\mid w(i) > w(i+1)\}$ is the descent set of $w$ and $F_{n, \Des(w)}$ is Gessel's fundamental quasisymmetric function (see Section \ref{ssec:symandqsym} for more definitions and background). Gessel's fundamental quasisymmetric functions were first introduced by Gessel in 1984 \cite{gessel}, and the problem of finding sets of permutations for which the associated quasisymmetric function is symmetric was first posed by Gessel and Reutenauer in 1993 \cite{gesselreutenauer}. We say a set $S$ is \textit{symmetric} if $Q(S)$ is symmetric, and $S$ is \textit{Schur-positive} if $Q(S)$ is Schur-positive. 

It is natural to ask for a classification of the possible sizes of symmetric sets. It turns out (Remark \ref{rmk:delmonotone}) that $S$ is symmetric if and only if $S\sm\{\iota_n, \delta_n\}$ is, where $\iota_n := 12\dots n$ and $\delta_n := n\dots 21$ are the monotone elements. Thus, it suffices to ask for the possible sizes of a symmetric set without the monotone elements (see, e.g., \cite[Question 5.3]{M25}). Marmor \cite{M25} showed in 2025 that for $n\ge 5$, if $S\sbs S_n$ is symmetric and $S\not\sbs \{\iota_n, \delta_n\}$ then $|S|\ge n-1.$ In this paper, we classify all the possible sizes of a symmetric set without monotone elements in $S_n$ for $n\ge 52.$ It turns out that $S$ is symmetric if and only if $S_n\sm(S\cup \{\iota_n, \delta_n\})$ is (Remark \ref{rmk:flipsize}), so it suffices to classify all the possible sizes up to $\frac{n!-2}2.$ The classification follows from the following two theorems.
\begin{thm}\label{thm:symsetlargsize}
    For $n\ge 4$ and $(n-1)(n-3)\le p\le \frac{n!-2}2,$ there is a symmetric set of size $p$ without monotone elements.
\end{thm}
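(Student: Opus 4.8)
The plan is to build symmetric sets of every size in the stated range by starting from a known ``atomic'' supply of symmetric sets and closing under disjoint union. The first step is to record the easy building blocks: the Knuth classes (Gessel--Reutenauer) are symmetric, so any union of Knuth classes is symmetric, and more flexibly, any set that is a union of inverse descent classes (i.e.\ closed under the operation $w\mapsto w'$ whenever $\Des(w)=\Des(w')$) has a symmetric generating function because $Q$ then becomes a nonnegative integer combination of the $r_S:=\sum_{\Des(w)=S}F_{n,S}$, and each such ribbon sum is symmetric. This already gives a fairly dense set of attainable sizes; the job is to fill in the gaps and, crucially, to avoid the monotone elements $\iota_n$ and $\delta_n$, which lie in their own singleton Knuth classes and whose descent classes ($\emptyset$ and $\{1,\dots,n-1\}$) are also singletons, so omitting them costs us exactly the two sizes $0$ and $n!-2$ at the boundary but is otherwise free.

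Next I would get a handful of small symmetric sets with consecutive sizes near the bottom of the range. Concretely, for a Knuth class (equivalently a fixed standard Young tableau shape $\lambda$, with the class being the set of $w$ with $Q$-tableau of shape $\lambda$), one has a symmetric set of size $f^\lambda$ (the number of SYT of shape $\lambda$). Taking $\lambda$ a hook or near-hook shape in $S_n$ and combining a few of these disjoint Knuth classes, one produces symmetric sets realizing a block of consecutive integers somewhere around size $(n-1)(n-3)$; the two-row and hook shapes give sizes like $\binom{n}{2}-1$, $\binom{n-1}{2}$, etc., and small perturbations (swapping one Knuth class for another of size one less) let us slide by $1$. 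The key combinatorial input is a ``numerical semigroup'' style lemma: once we have symmetric sets of all sizes in some interval $[a,a+L]$ with $L$ at least the largest ``gap'' among our atomic sizes, disjoint unions of these with additional Knuth classes (whose sizes we control) reach every integer from $a$ up to $(n!-2)/2$. Because $S_n$ has Knuth classes of size $1$ (from column/row shapes other than the two monotone ones once $n\ge 3$: e.g.\ shape $(2,1,\dots,1)$ and $(n-1,1)$ have $f^\lambda=n-1$, but shapes with $f^\lambda=1$ are only the single row and single column), we should instead use that there exist pairs of disjoint symmetric sets differing in size by $1$ to do the fine adjustment; alternatively use inverse descent classes, whose sizes are more flexible, to pad by $1$.

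Then the bulk of the argument is the induction/packing step: suppose we have symmetric sets without monotone elements of all sizes in $[(n-1)(n-3), M]$ for some $M$; I want to push $M$ up to $(n!-2)/2$. Partition $S_n\sm\{\iota_n,\delta_n\}$ into Knuth classes (none of which is $\{\iota_n\}$ or $\{\delta_n\}$), list their sizes $c_1\ge c_2\ge\cdots$, and note $\sum c_i=n!-2$. Given a target $p\le (n!-2)/2$, greedily select Knuth classes whose sizes sum to something within $c_{\max}$ of $p$, then correct the $O(c_{\max})$-size discrepancy using the stock of small symmetric sets of consecutive sizes from the previous paragraph (chosen disjoint from the greedily selected classes — possible because we only used a bounded number of small classes and $n$ is large). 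Making this rigorous requires: (i) a clean bound on the largest Knuth class size we are forced to use, (ii) enough ``room'' (unused permutations) to place the correcting small set disjointly, and (iii) checking the base of the range $p=(n-1)(n-3)$ is actually hit — this is where the hypothesis $n\ge 4$ and the precise constant $(n-1)(n-3)$ enter, matching up with Theorem~\ref{thm:symsetsmallsize}-type results (the companion classification of small symmetric sets) so that there is no gap between ``small'' and ``large.'' I expect the main obstacle to be exactly this seam: producing symmetric sets of \emph{every} size in a long initial interval starting precisely at $(n-1)(n-3)$ while keeping them disjoint-unionizable and monotone-free, rather than the asymptotic packing, which is comparatively soft once the initial interval is long enough.
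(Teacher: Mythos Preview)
Your high-level strategy---disjoint unions of Knuth classes plus a numerical-semigroup argument---is exactly what the paper does for the upper part of the range: via hook-length arithmetic with the shapes $(n-1,1)$, $(n-2,2)$, $(n-2,1,1)$ and an interval-addition lemma, Knuth-closed sets alone already realize every size from $(n-2)\tfrac{n(n-3)}{2}$ up to $\tfrac{n!-2}{2}$. However, one of your proposed tools is broken: unions of descent classes are \emph{not} symmetric. If $S$ is closed under $w\mapsto w'$ whenever $\Des(w)=\Des(w')$, then $Q(S)$ is a nonnegative combination of single fundamentals $F_{n,D}$, and $F_{n,D}$ is not symmetric unless $D\in\{\varnothing,[n-1]\}$. (Perhaps you mean \emph{inverse} descent classes, fixing $\Des(w^{-1})$; those are symmetric, but only because they are already unions of Knuth classes, so they contribute no sizes beyond what Knuth-closed sets give.)

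The substantive gap is the step-by-$1$ adjustment at the seam. The smallest nonmonotone Knuth class has size $n-1$, so ``swapping one Knuth class for another of size one less'' is simply not available, and you offer no alternative. The paper bridges $(n-1)(n-3)\le p<(n-2)\tfrac{n(n-3)}{2}$ with a non-Knuth construction: sets $\{\pi_1,\dots,\pi_n\}$ with $\Des(\pi_i)=\{i-1,i\}\cap[n-1]$ (or the complementary descent sets), shown symmetric by a direct check on the associated set system. These have size exactly $n$, so mixing $u$ hook Knuth classes (size $n-1$) with $v$ such sets (size $n$) and $t$ classes of shape $(n-2,2)$ produces $u(n-1)+vn+t\tfrac{n(n-3)}{2}$, which covers the seam; the delicate point is arranging all pieces disjointly, handled by bounding the number of descents of permutations in each piece. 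This explicit size-$n$ symmetric set is the missing ingredient in your plan, and without it the interval starting at $(n-1)(n-3)$ cannot be reached.
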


\begin{thm}\label{thm:symsetsmallsize}
    For $n\ge 52$ and $p < (n-1)(n-3)$, there exists a symmetric set of size $p$ without monotone elements if and only if \textup{(}exactly\textup{)} one of the following is true
    \begin{enumerate}[label=\textup(\arabic*\textup)]
        \item $p = qn-r$ for some $0\le r \le q < n-3,$
        \item $n$ is even and $p = \frac{n(n-3)}2+c(n-1)$ for some $c\ge 0,$
        \item $n$ is odd and $p = \binom{n-1}2+cn$ for some $c\ge 0.$
    \end{enumerate}
\end{thm}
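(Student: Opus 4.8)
The plan is to pass from sets to their \emph{descent multiplicity vectors} and treat the two implications separately. For $S\sbs S_n$ put $m_U(S):=\#\{w\in S:\Des(w)=U\}$ for $U\sbs[n-1]$, so $Q(S)=\sum_U m_U(S)\,F_{n,U}$ and $|S|=\sum_U m_U(S)$. Three basic facts are recorded first: (i) an integer vector $(m_U)$ is realized by some $S\sbs S_n$ iff $0\le m_U\le\beta_n(U)$, where $\beta_n(U)$ is the number of permutations of $[n]$ with descent set exactly $U$, and the realizing $S$ omits both $\iota_n,\delta_n$ iff $m_\emptyset=m_{[n-1]}=0$; (ii) writing $g_S(T):=\sum_{U\sbs T}m_U(S)$, $Q(S)$ is symmetric iff $g_S(T)$ depends only on the partition $\lambda(T)$ obtained by sorting the composition of $n$ that $T$ encodes; (iii) in $s_\mu$ the coefficient of $F_{n,U}$ is the number of standard Young tableaux of shape $\mu$ with descent set $U$, which forces $\mu$ to have at most $|U|+1$ rows, and one has the explicit expansions $s_{(n-1,1)}=\sum_i F_{n,\{i\}}$, $s_{(n-2,1,1)}=\sum_{i<j}F_{n,\{i,j\}}$, $s_{(n-2,2)}=\sum_{2\le i\le n-2}F_{n,\{i\}}+\sum_{j\ge i+2}F_{n,\{i,j\}}$, together with the conjugate identities.

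For the existence direction I use four ``atoms'': a Knuth class of shape $(n-1,1)$ (size $n-1$; here $m_{\{i\}}=1$ for all $i$), a Knuth class of shape $(n-2,1,1)$ (size $\binom{n-1}{2}$), a Knuth class of shape $(n-2,2)$ (size $\tfrac{n(n-3)}{2}$), and a non--Schur-positive gadget of size $n$: any $S$ with $m_{\{1\}}=m_{\{n-1\}}=1$, $m_{\{i,i+1\}}=1$ for $1\le i\le n-2$, and all other $m_U=0$. By the identities in (iii), $Q(S)=s_{(n-1,1)}+s_{(n-2,1,1)}-s_{(n-2,2)}$, which is symmetric, and $|S|=(n-1)+\binom{n-1}{2}-\tfrac{n(n-3)}{2}=n$ since $\binom{n-1}{2}-\tfrac{n(n-3)}{2}=1$. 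Because each relevant descent set supports many permutations (at least $n-1$ for the borderline singletons) and $n\ge 52$, we may take numerous pairwise disjoint copies of the atoms with all the bounds $m_U\le\beta_n(U)$ slack. The disjoint union of $q-j$ copies of the size-$(n-1)$ atom and $j$ copies of the size-$n$ atom ($0\le j\le q$) has every size in $[q(n-1),qn]$, giving case (1); one $(n-2,2)$-atom together with $c$ size-$(n-1)$ atoms gives $\tfrac{n(n-3)}{2}+c(n-1)$, case (2); one $(n-2,1,1)$-atom together with $c$ size-$n$ atoms gives $\binom{n-1}{2}+cn$, case (3). A short congruence computation modulo $n-1$ shows that of the two numbers $\tfrac{n(n-3)}{2}$ and $\binom{n-1}{2}$, exactly one already lies in some interval $[q(n-1),qn]$ with $q<n-3$ --- the first when $n$ is odd, the second when $n$ is even --- which is the source of the parity split and also shows cases (1)--(3) are pairwise disjoint for $p<(n-1)(n-3)$.

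For the converse the heart is a \emph{structure statement}: if $S$ is symmetric, omits $\iota_n,\delta_n$, and $|S|<(n-1)(n-3)$, then $m_U(S)=0$ for $3\le|U|\le n-4$, whence $Q(S)$ lies in the $\Z$-span of $\mathcal L:=\{s_{(n-1,1)},s_{(2,1^{n-2})},s_{(n-2,1,1)},s_{(3,1^{n-3})},s_{(n-2,2)},s_{(2,2,1^{n-4})}\}$. Granting this, write $Q(S)=a\,s_{(n-1,1)}+a's_{(2,1^{n-2})}+b\,s_{(n-2,1,1)}+b's_{(3,1^{n-3})}+d\,s_{(n-2,2)}+d's_{(2,2,1^{n-4})}$; reading off $m_U$ on singletons, on consecutive pairs, and on their complements shows $F$-positivity is equivalent to $a,a',b,b'\ge 0$ and $a+d,\ a'+d',\ b+d,\ b'+d'\ge 0$. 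Using $\binom{n-1}{2}=\tfrac{n(n-3)}{2}+1$ one rewrites $|S|=A(n-1)+B+W\cdot\tfrac{n(n-3)}{2}$ with $A:=a+a'\ge 0$, $B:=b+b'\ge 0$, $W:=(b+d)+(b'+d')\ge 0$, and $B\le A+W$, and the hypothesis $|S|<(n-1)(n-3)$ forces $W\le 1$. If $W=0$ then $|S|=An-r$ with $0\le r\le A$, and $A\ge n-3$ is impossible since then $|S|\ge(n-3)(n-1)$ --- this is case (1). If $W=1$ then $|S|=A(n-1)+B+\tfrac{n(n-3)}{2}$ with $0\le B\le A+1$; running over the values of $B$ and reducing modulo $n-1$ one finds each such $|S|$ is either in case (1), or (after possibly replacing $S$ by its value-reverse, which swaps the two parity subcases) equals $\tfrac{n(n-3)}{2}+c(n-1)$ with $n$ even, or $\binom{n-1}{2}+cn$ with $n$ odd. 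This matches the list exactly.

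The main obstacle is the structure statement. It amounts to excluding every $s_\mu$ with $\mu\notin\{(n),(1^n)\}\cup\{\text{shapes of }\mathcal L\}$ from the expansion of $Q(S)$: all such $\mu$ have $\#\SYT(\mu)\ge(n-1)(n-3)$ once $n\ge 52$, so a \emph{positive} coefficient forces $|S|$ too large at once, but a \emph{negative} coefficient must be ruled out by exhibiting a descent set $U$ at which $s_\mu$ contributes while no cheaper shape compensates, so that $m_U(S)<0$ --- a contradiction with $F$-positivity. Doing this uniformly over all such $\mu$, and in particular over the ``second layer'' shapes $(n-3,3)$, $(n-3,2,1)$, $(n-3,1,1,1)$ and their conjugates --- whose descent sets overlap those of $\mathcal L$ and so require a careful cancellation analysis --- is the delicate step, and it is precisely this (together with needing a size-$(n-1)$ atom inside the borderline descent classes) that produces the hypothesis $n\ge 52$. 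A secondary, more clerical, difficulty is checking disjointness and the bounds $m_U\le\beta_n(U)$ when assembling the atoms in the existence direction.
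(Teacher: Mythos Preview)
Your existence direction is essentially the paper's argument (your size-$n$ gadget is exactly Proposition~4.8), and the disjointness bookkeeping, while sketchier than the paper's Lemma~4.9, can be made to work.

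The converse direction, however, has a genuine gap: your ``structure statement'' that $m_U(S)=0$ for $3\le |U|\le n-4$ is not proven, and the sketch you give cannot prove it, because the statement is very likely \emph{false}. Your argument for excluding a negative coefficient $c_\mu$ is to find a descent set $U$ where $s_\mu$ is the only contributor, forcing $m_U<0$. But nothing prevents several shapes outside $\mathcal L$ from appearing simultaneously with mixed signs and compensating one another on every such $U$; your sketch does not address this interaction at all, and you yourself flag it as ``the delicate step'' without resolving it. More seriously, the paper's own analysis shows that symmetric sets of size below $(n-1)(n-3)$ \emph{can} contain permutations with three or four descents: in Lemma~4.18, Case~1.1 permits $H_{(3)}>0$ (elements with descent set $\{i,i+1,i+2\}$), and Case~2.1.1 forces $H_{(4)}=H_{(2,2)}=1$ (elements with four descents). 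The paper does not rule these cases out; it computes $|S|$ for them and shows the answer is still a nonnegative combination of $n-1,\,n,\,\binom{n-1}{2},\,\tfrac{n(n-3)}{2}$. So the paper's conclusion about $|S|$ holds without your structure statement, and your statement is strictly stronger than what is needed---and apparently stronger than what is true.

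The paper's route is genuinely different: rather than controlling the Schur expansion of $Q(S)$, it works with the harmonic set system $A(S)$ and the intersection sizes $|H_I|$. The key technical input is a pigeonhole bound (Proposition~4.13, requiring $m\ge 51$) forcing $H_{\{1,3,5\},\{7,9,11\}}=\varnothing$, which after a splitting lemma (Lemma~4.17) reduces to a system where only run decompositions $\lambda\in\{\varnothing,(1),(2),(1,1),(3),(2,1),(4),(2,2)\}$ can occur. A lengthy but elementary casework on the values $H_\lambda$ (Lemma~4.18) then pins down $|S|$ as a nonnegative combination of the four building blocks. This analysis is about intersection counts, not about $m_U$ or the Schur expansion, and it never needs $m_U=0$ for $|U|\in\{3,4\}$. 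If you want to salvage your approach, you would need to replace the structure statement by something weaker---allowing Schur support on shapes like $(n-3,3)$, $(n-3,2,1)$, $(n-3,1,1,1)$ and their conjugates---and then redo the size computation in that larger span; but at that point you are essentially reinventing the paper's casework in a different language.
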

Marmor showed that their lower bound $n-1$ is tight by giving an example of a Schur-positive set of size exactly $n-1$, and conjectured that all symmetric sets realizing this lower bound are actually Schur-positive \cite[Conjecture 5.2]{M25}. We give a classification of symmetric sets of sizes at most $n-1$, which gives a new proof of Marmor's result and show that all symmetric sets of size at most $n-1$ are Schur-positive. 
\begin{thm}\label{thm:symmk}
    Suppose $S\subseteq S_n$ and $|S|\le n-1$. Then the following are equivalent:
    \begin{enumerate}[label=\textup(\arabic*\textup)]
        \item $S$ is symmetric.
        \item One of the following is true:
    \begin{enumerate}[label=\textup(\alph*\textup)]
        \item $S\subseteq \{\iota_n, \delta_n\}.$
        \item $S = \{\pi_1, \dots, \pi_{n-1}\}$ where either $\Des(\pi_i) = \{i\}$ for all $i$, or $\Des(\pi_i) = [n-1]\sm\{i\}$ for all $i$.
        \item $n = 4$ and $S = \{\pi_1, \pi_2\}$ or $S = \{\pi_1, \pi_2, \iota_4\}$ or $S = \{\pi_1, \pi_2, \delta_4\}$, where $\Des(\pi_1) = \{1, 3\}$  and $\Des(\pi_2) = \{2\}$.
        \item $n = 6$ and $S = \{\pi_1, \pi_2, \dots, \pi_5\}$ where $\Des(\pi_1) = \{1, 3, 5\},$ $\Des(\pi_2) = \{2, 5\},$ $\Des(\pi_3) = \{3\}, \Des(\pi_4) = \{1,4\}, \Des(\pi_5) = \{2, 4\}$; or $\Des(\pi_1) = \{2,4\},$ $\Des(\pi_2)$ $ = \{1,3,4\}$, $\Des(\pi_3) = \{1,2,4,5\},$ $\Des(\pi_4) = \{2,3,5\},$ $\Des(\pi_5) = \{1,3,5\}$.
    \end{enumerate}
    \item $S$ is Schur-positive.
\end{enumerate}
\end{thm}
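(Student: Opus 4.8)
The plan is to prove the cycle of implications $(3)\Rightarrow(1)\Rightarrow(2)\Rightarrow(3)$. The implication $(3)\Rightarrow(1)$ is immediate, since Schur functions are symmetric. For $(2)\Rightarrow(3)$ I would use that $Q(S)$ depends only on the multiset $\{\!\{\Des(w):w\in S\}\!\}$ and evaluate $Q$ on each listed family via $s_\lambda=\sum_{T\in\SYT(\lambda)}F_{n,\Des(T)}$: the SYT of shape $(n-1,1)$ have descent sets exactly the singletons $\{1\},\dots,\{n-1\}$, so the first option of (b) gives $Q(S)=s_{(n-1,1)}$, and applying $\omega$ (which sends $F_{n,T}$ to $F_{n,[n-1]\sm T}$ and $s_{(n-1,1)}$ to $s_{(2,1^{n-2})}$) gives the second option with $Q(S)=s_{(2,1^{n-2})}$; in (a), $Q(S)\in\{0,\,s_{(n)},\,s_{(1^n)},\,s_{(n)}+s_{(1^n)}\}$; and in (c),(d) a short finite check gives $F_{4,\{1,3\}}+F_{4,\{2\}}=s_{(2,2)}$ and identifies the two size-$5$ families for $n=6$ as the descent multisets of $\SYT((3,3))$ and $\SYT((2,2,2))$, with values $s_{(3,3)}$ and $s_{(2,2,2)}$ respectively (adding a monotone element in (c) adds $s_{(4)}$ or $s_{(1^4)}$). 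All of these are Schur-positive.

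The substance is $(1)\Rightarrow(2)$. Since $Q(S)$ depends only on $\mathcal D:=\{\!\{\Des(w):w\in S\}\!\}$, since the coefficient of $F_{n,\emptyset}$ (resp.\ $F_{n,[n-1]}$) in $Q(S)$ equals both the multiplicity of $\emptyset$ (resp.\ $[n-1]$) in $\mathcal D$ and the coefficient of $s_{(n)}$ (resp.\ $s_{(1^n)}$) in $Q(S)$, and since $\emptyset=\Des(\iota_n)$ and $[n-1]=\Des(\delta_n)$ occur in $\mathcal D$ with multiplicity at most $1$, it is enough to classify multisets $\mathcal D$ of subsets of $[n-1]$ containing neither $\emptyset$ nor $[n-1]$, with $|\mathcal D|\le n-1$ and $\sum_{T\in\mathcal D}F_{n,T}$ symmetric, and then to re-attach the monotone terms while tracking sizes. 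The main tool is the well-known criterion (via the expansion $F_{n,S}=\sum_{T\sps S}M_{\mathrm{co}(T)}$, the sum over $T\sbs[n-1]$, with $\mathrm{co}(T)$ the composition of $n$ associated to $T$): $\sum_{T\in\mathcal D}F_{n,T}$ is symmetric if and only if, for every partition $\mu\vdash n$, the quantity $\#\{T\in\mathcal D:T\sbs U\}$ is the same for every $U\sbs[n-1]$ whose associated composition of $n$ has parts rearranging $\mu$. Three consequences I would extract are: (i) $\mathcal D$ is invariant under $T\mapsto\{n-i:i\in T\}$; (ii) the complement multiset $\{[n-1]\sm T:T\in\mathcal D\}$ is again symmetric of the same size; (iii) taking $\mu=(2,1^{n-2})$, every point of $[n-1]$ lies in the same number $d$ of members of $\mathcal D$, and taking $\mu=(k,1^{n-k})$ and finer shapes controls how many members of $\mathcal D$ avoid or contain prescribed points.

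The reduction then comes down to the Main Lemma: for $n\ge7$, such a $\mathcal D$ is empty, the complete multiset of singletons, or the complete multiset of $(n-2)$-subsets; for $n=4$ it may additionally be $\{\!\{\{2\},\{1,3\}\}\!\}$, and for $n=6$ it may additionally be the descent multiset of $\SYT((3,3))$ or of $\SYT((2,2,2))$. For $n\le6$ this is a finite verification. For $n\ge7$ I would first carry out the hard part, namely showing that $\mathcal D$ contains no set of size $j$ with $2\le j\le n-3$: one argues that a single such ``intermediate'' set forces, through successively finer partition constraints, so many further members of $\mathcal D$ that $|\mathcal D|>n-1$. Once $\mathcal D$ consists only of singletons and $(n-2)$-subsets, the covering condition (iii) together with its refinements (from $\mu=(3,1^{n-3})$, $(4,1^{n-4})$, $(n-2,1,1)$, etc.) forces the singletons to occur with a common multiplicity $v$ and, by (ii), the $(n-2)$-subsets with a common multiplicity $w$; then $(n-1)(v+w)=|\mathcal D|\le n-1$ gives $v+w\le1$, leaving exactly the three stated possibilities. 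Reading off $Q(S)$, which is then $0$, $s_{(n-1,1)}$, or $s_{(2,1^{n-2})}$ (or a small exceptional Schur function when $n\le6$), shows $Q(S)$ is Schur-positive and, after the monotone terms are restored, matches the families in (2) exactly; in particular this reproves Marmor's lower bound $|S|\ge n-1$ and confirms $(1)\Rightarrow(3)$.

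The main obstacle is the ``no intermediate set'' step for $n\ge7$: quantifying the cascade of partition constraints so as to show, uniformly in $n$ and in $j\in\{2,\dots,n-3\}$, that a symmetric multiset with a member of size $j$ must have more than $n-1$ members in total — rather than doing this shape-by-shape. The genuine exceptions at $n=4$ and $n=6$ make clear both that the bound $|\mathcal D|\le n-1$ is used in an essential and somewhat delicate way and that the argument cannot survive below $n=7$; organizing the constraint cascade into a clean uniform bound is where the real work lies.
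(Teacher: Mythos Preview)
Your overall architecture matches the paper's: the same cycle $(3)\Rightarrow(1)\Rightarrow(2)\Rightarrow(3)$, the same identification of the $(2)\Rightarrow(3)$ cases with $s_{(n-1,1)}$, $s_{(2,1^{n-2})}$, $s_{(2,2)}$, $s_{(3,3)}$, $s_{(2,2,2)}$, and the same reduction of $(1)\Rightarrow(2)$ to a classification statement about the descent multiset under the ``equal counts for every $U$ of a given composition type'' criterion. That criterion is exactly the paper's notion of a \emph{harmonic set system}: setting $A_i=\{T\in\mathcal D:i\in T\}$, your condition on $\#\{T\in\mathcal D:T\sbs U\}$ is the paper's condition that $|H_I|$ depend only on the run decomposition of $I$.

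The genuine gap is precisely the step you flag as the obstacle. You propose to exclude any $T\in\mathcal D$ with $2\le|T|\le n-3$ by a ``cascade'' of partition constraints forcing $|\mathcal D|>n-1$, but you give no mechanism for carrying this out uniformly in $j$ and $n$, and the exceptions at $n=4,6$ show that any such argument must be sharp. The paper bypasses this entirely with a structural induction you have not found: if $H=(U,(A_1,\dots,A_m))$ is harmonic, then the restricted systems $B_i=A_i\cap A_m$ and $C_i=A_i\sm A_m$ for $i\le m-2$ are again harmonic (Lemma~\ref{lm:bcharmonic}). Since $|A_m|=k\le m/2$ (after possibly complementing) and $|U\sm A_m|\le m-k$, both restricted systems have universe of size at most $m-2$ with $m-2$ sets, so the inductive hypothesis applies. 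This reduces everything to the small cases $k\le 2$ and to a handful of explicit lemmas (\ref{lm:4to2}, \ref{lm:523}, \ref{lm:star}) that pin down exactly when the $n=4$ and $n=6$ exceptions appear. Your proposal, as it stands, is a correct outline missing its engine; the restriction-and-induct trick is that engine.
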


A remarkable example of a symmetric set of permutations is the arc permutations \cite{er}, which is the set of all permutations in $S_n$ avoiding the pattern set $\Pi := \{w\in S_4\mid |w(1)-w(2)| = 2\}.$ This has motivated a line of work in finding sets of patterns $\Pi$ such that $S_n(\Pi)$, the set of all permutations in $S_n$ avoiding $\Pi$, is symmetric for all $n$ \cite{bloomsagan, ER17, HZP, M25, S15}. Such sets are called \textit{symmetrically avoided}. If $S_n(\Pi)$ is actually Schur-positive for all $n$, we say $\Pi$ is \textit{Schur-positively avoided}.

Bloom and Sagan \cite{bloomsagan} initiated the study of symmetrically avoided sets of small size. They proved that for $k\ge 4,$ if $\Pi\sbs S_k$ is symmetrically avoided and $|\Pi|\le 2,$ then $\Pi\sbs\{ \iota_k, \delta_k\}.$
Marmor \cite{M25} extended this result, showing that for $k\ge 5$, if $\Pi\sbs S_k$ is symmetrically avoided and $|\Pi|\not \sbs \{ \iota_k, \delta_k\},$ then $|\Pi|\ge k-1.$ 
We give a complete classification of all symmetrically avoided sets of size at most $k-1,$ which as a corollary shows that these sets are actually Schur-positively avoided.
\begin{thm}\label{thm:smallsaset}
    Given $\Pi\subseteq S_k$ such that $|\Pi|\le k-1$, the following are equivalent.
    \begin{enumerate}[label=\textup(\arabic*\textup)]
        \item $\Pi$ is symmetrically avoided,
        \item $\Pi$ is a partial shuffle or the complement of a partial shuffle, or $\Pi\subseteq \{\iota_k, \delta_k\},$
        \item $\Pi$ is Schur-positively avoided.
    \end{enumerate}
\end{thm}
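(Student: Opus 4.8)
The implication $(3)\Rightarrow(1)$ is immediate, since a Schur-positive function is by definition symmetric; equivalently, a Schur-positively avoided set is symmetrically avoided. All the content lies in $(1)\Rightarrow(2)$ and $(2)\Rightarrow(3)$, and only the former will use the hypothesis $|\Pi|\le k-1$.

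For $(1)\Rightarrow(2)$ the plan is to bootstrap upward from $n=k$. Since every pattern in $\Pi\sbs S_k$ already has length $k$, a permutation $w\in S_k$ avoids $\Pi$ exactly when $w\notin\Pi$, so $S_k(\Pi)=S_k\sm\Pi$. By Remarks~\ref{rmk:delmonotone} and \ref{rmk:flipsize}, $S_k\sm\Pi$ is symmetric if and only if $\Pi\sm\{\iota_k,\delta_k\}$ is symmetric; hence if $\Pi$ is symmetrically avoided then $\Pi\sm\{\iota_k,\delta_k\}$ is a symmetric subset of $S_k$ of size at most $k-1$, and Theorem~\ref{thm:symmk} pins it down: it is empty, or a family $\{\pi_i\}$ with $\Des(\pi_i)=\{i\}$ (case~(b)), or the complementary family with $\Des(\pi_i)=[k-1]\sm\{i\}$, or one of the sporadic $k\in\{4,6\}$ families of cases~(c)--(d). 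Thus $\Pi$ itself lies on a short list (such a family, possibly with $\iota_k$ or $\delta_k$ adjoined), and it remains to decide which members of the list stay symmetric for \emph{every} larger $n$. For this I would analyze $S_{k+1}(\Pi)$, and $S_{k+2}(\Pi)$ where needed: the permutations of $[k+1]$ failing to avoid $\Pi$ are controlled locally by how the individual $\pi_i$ overlap, and I expect that forcing $Q(S_{k+1}(\Pi))$ to be symmetric is equivalent to the coherence condition defining a partial shuffle (resp.\ the complement of one); the sporadic $k\in\{4,6\}$ candidates that are neither should then be ruled out by exhibiting an explicit small $n$ with $Q(S_n(\Pi))$ non-symmetric.

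For $(2)\Rightarrow(3)$ there are three cases. If $\Pi\sbs\{\iota_k,\delta_k\}$, apply RSK: $S_n(\iota_k)$ is precisely the set of $w\in S_n$ whose insertion shape $\lambda$ has $\lambda_1\le k-1$, and since $\Des(w)$ equals the descent set of the recording tableau of $w$, summing Gessel's expansion $s_\lambda=\sum_{T\in\SYT(\lambda)}F_{n,\Des(T)}$ over recording tableaux gives $Q(S_n(\iota_k))=\sum_{\lambda_1\le k-1}f^\lambda s_\lambda$ with $f^\lambda=|\SYT(\lambda)|$, which is manifestly Schur-positive; the same argument handles $\delta_k$ (bounding the number of rows of $\lambda$) and $\{\iota_k,\delta_k\}$ (restricting $\lambda$ to the $(k-1)\times(k-1)$ box). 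If $\Pi$ is a partial shuffle, I would first describe $S_n(\Pi)$ explicitly---it should again be the set of shuffles of the same type in $S_n$---and then show it is Schur-positive, either by verifying it is a union of Knuth classes (so $Q(S_n(\Pi))$ is a nonnegative combination of Schur functions) or by computing $Q(S_n(\Pi))$ directly and recognizing Schur-positivity via the identity $h_a h_{n-a}=F_{n,\emptyset}+F_{n,\{a\}}$ and Pieri's rule. Finally, if $\Pi$ is the complement of a partial shuffle $\Pi'$, then $S_n(\Pi)$ is the value-reversal of $S_n(\Pi')$, so $Q(S_n(\Pi))$ is obtained from $Q(S_n(\Pi'))$ by the substitution $F_{n,D}\mapsto F_{n,[n-1]\sm D}$, i.e.\ by the standard involution $\omega$ (conjugation of partitions), which preserves Schur-positivity.

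I expect the classification step inside $(1)\Rightarrow(2)$ to be the main obstacle: converting ``$Q(S_{k+1}(\Pi))$ is symmetric'' into a clean combinatorial constraint on $\{\pi_i\}$ is delicate, because $S_n(\Pi)$ carries no structure for a general $\Pi$, and the $k\in\{4,6\}$ sporadic cases have to be disposed of by separate finite computations.
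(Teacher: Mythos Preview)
Your strategy for $(1)\Rightarrow(2)$ is exactly the paper's: reduce to Theorem~\ref{thm:symmk} at level $n=k$, dispose of the sporadic cases (c)--(d) by finite computation, and for case (b) show that symmetry of $S_{k+1}(\Pi)$ alone forces the partial-shuffle structure. That last step is Lemma~\ref{lm:symmshuffle}, and it is where essentially all the work lies. The paper passes to the complement $Z=S_{k+1}\sm S_{k+1}(\Pi)$ and uses harmonicity of $A(Z)$ together with a battery of explicit counts (Lemmas~\ref{lm:countconone}--\ref{lm:inductstep}): how many $\pi\in S_{k+1}$ with a prescribed one- or two-element descent set contain a given $\sigma_i$, or a given pair $\sigma_i,\sigma_j$. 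These counts convert harmonicity into equalities among the overlap data $P_i=\{\std(\del(\sigma_i,i)),\std(\del(\sigma_i,i+1))\}$, which after further case analysis force $\sigma_i=Q(i,t)$ for a common $t$. Your phrase ``controlled locally by how the individual $\pi_i$ overlap'' is the right intuition, but you should not underestimate the amount of careful counting needed; on the other hand, level $k+1$ suffices and your ``$S_{k+2}(\Pi)$ where needed'' is not required.

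For $(2)\Rightarrow(3)$ the paper simply invokes \cite[Theorem~4.1]{bloomsagan}, \cite[Proposition~2.6]{HZP}, and Corollary~\ref{cor:crpat}; you instead propose a self-contained argument. Your RSK treatment of $\Pi\sbs\{\iota_k,\delta_k\}$ is correct and standard. However, your description of $S_n(\Pi)$ for a partial shuffle $\Pi$ as ``the set of shuffles of the same type in $S_n$'' is wrong: already for $\Pi=(2,3)\cshuffle(1)=\{213,231\}$ one has $|S_n(\Pi)|=2^{n-1}$ (these are the permutations in which every $w(i)$ is the maximum or minimum of the suffix $w(i),\dots,w(n)$), not $n$. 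So neither the ``union of Knuth classes'' nor the ``$h_ah_{n-a}$'' route goes through as stated; if you want to avoid citing Bloom--Sagan you will need their actual description of $S_n(\Pi)$.
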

Here, the word ``complement'' refers to the element-wise complement of a set, which we define in Section \ref{sec:background} (Definition \ref{def:permcr}). Partial shuffles were defined by Bloom and Sagan \cite{bloomsagan}; we recap this in Definition \ref{def:partialsh}.

Our proof technique involves recasting the problem of symmetric sets in terms of \textit{harmonic set systems}, which we define in Section \ref{sec:har} (Definition \ref{def:harmonicset}). Informally, a harmonic set system is a collection of sets such that the sizes of their intersections are uniform in a certain sense. Inspired by \cite{M25}, for each set of permutations one can define an associated set system, which we prove is harmonic if and only if the set of permutation is symmetric.

The rest of the paper is structured as follows. In Section \ref{sec:background} we go over relevant background and preliminaries. In Section \ref{sec:har}, we introduce harmonic set systems. In Section \ref{sec:sizeofsymm}, we prove Theorem \ref{thm:symsetlargsize} and Theorem \ref{thm:symsetsmallsize}. In Section \ref{sec:minsymmset}, we prove Theorem \ref{thm:symmk} and Theorem \ref{thm:smallsaset}. 
\subsection*{Acknowledgements}
This research was conducted at the University of Minnesota Duluth REU with support from Jane Street Capital, NSF Grant 1949884, and donations from Ray Sidney and Eric Wepsic. The author thanks Mitchell Lee for extensive feedback and guidance during the research and writing process, and Katherine Tung for helpful writing advice. The author also thanks Joe Gallian and Colin Defant for their support and for the invitation to participate in the Duluth REU, and Maya Sankar, Noah Kravitz
as well as other students and visitors for helpful discussions. 
\section{Background and preliminaries}\label{sec:background}

\subsection{Permutations and pattern avoidance}

We use the notation $[n]:=\{1, 2, \dots, n\}.$ Let $S_n$ be the symmetric group on $[n]$, i.e. the set of bijections from $[n]$ to $[n]$. One can think of a permutation $\sigma\in S_n$ as a sequence $\sigma(1), \dots, \sigma(n)$. Given any sequence $\sigma$ of $k$ distinct numbers, its standardization, denoted $\std(\sigma)$, is the sequence obtained by replacing the $i$-th smallest number in the sequence by $i$ for $1\le i \le k$.
\begin{defn}
    A permutation $w\in S_n$ is said to \textit{contain} $\pi\in S_k$ if there is a subsequence $\sigma$ of $w$ such that $\std(\sigma) = \pi$. It is said to \textit{avoid} $\pi$ otherwise. When considering permutations containing or avoiding $\pi$, we will sometimes refer to $\pi$ as a \textit{pattern}.
\end{defn}
\begin{ex}
The permutation $4\underline{1}52\underline{6}\underline{73}\in S_7$ contains $1342\in S_4$ since $1673$ is a subsequence \textup(underlined in the original sequence\textup) and $\std(1673) = 1342$. However, it avoids $3241$, since no subsequence standardize to $3241$.
\end{ex}
\begin{defn}
    Given $\Pi\subseteq \bigcup\limits_{k = 1}^\infty S_k$, we say $w\in S_n$ avoids $\Pi$ if it avoids all patterns $\pi \in \Pi$.
\end{defn}
\begin{defn}
    Given a pattern set $\Pi\subseteq \bigcup\limits_{k = 1}^\infty S_k$, we define $S_n(\Pi)$ to be the set of all permutations in $S_n$ avoiding $\Pi$:
    \begin{equation}
        S_n(\Pi) := \{w\in S_n\mid w\text{ avoids }\Pi\}
    \end{equation}
\end{defn}
\begin{defn}
    Define $\iota_k := 1,2,\dots, k\in S_k$ and $\delta_k = k, k-1,\dots, 1\in S_k$. We call $\iota_k, \delta_k$ \textit{the monotone elements} of $S_k$.
\end{defn}
It will be helpful to have some geometric perspective of pattern avoidance. We will roughly follow the convention as in \cite{ggc}.
\begin{defn}
    A \textit{figure} is a subset of $\R^2$. A \textit{plot} is a finite figure that is \textit{independent}; that is, no two points lie on the same horizontal or vertical line. Given a plot $P$, one can associate a corresponding permutation $\sigma$ as follows. Consider two ordering of the points from left to right (i.e. increasing $x$-coordinate), and from bottom up (i.e. increasing $y$-coordinate). For each point $p\in P$, let $x(p)$ and $y(p)$ be the position of $p$ in the ordering. Then $\sigma$ is the permutation that maps $x(p)$ to $y(p)$ for each $p$. We say $\sigma$ is the permutation of $P$ and $P$ is a plot of $\sigma$. For each permutation $\sigma\in S_n$, define its \textit{standard plot} to be $\{(i, \sigma(i))\mid 1\le i\le n\}$.
\end{defn}
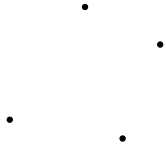
\begin{figure}[h!]
    \begin{tikzpicture}[scale=0.5]
        \filldraw (0,1) circle (2pt);
        \filldraw (2,4) circle (2pt);
        \filldraw (3,0.5) circle (2pt);
        \filldraw (4,3) circle (2pt);
    \end{tikzpicture}
    \caption{A plot of $2413$}\label{fig:plo2413}
\end{figure}
\begin{ex}
    Figure \ref{fig:plo2413} shows a plot of $2413$. 
\end{ex}
\begin{rmk}
    A permutation $w$ contains a pattern $\pi$ if and only if every plot of $w$ contains a plot of $\pi$. In that case, every plot of $\pi$ can be extended into a plot of $w$ by adding some points.
\end{rmk}

\subsection{Symmetric and quasisymmetric functions}\label{ssec:symandqsym}
Let $\mathbf x = \{x_1, x_2,\dots\}$, and let $\Z[[\mathbf x]]$ denotes the ring of formal power series in the variables $x_1, x_2, \dots$ with coefficients in $\Z$. An element $f\in \Z[[\bfx]]$ is said to be symmetric if it is invariant under swapping any two variables. Let $\Sym$ denote the ring of symmetric functions; i.e. symmetric elements $f\in \Z[[\bf x]]$ with bounded degree.

\begin{defn}
A partition $\lambda$ of $n$ is a weakly decreasing sequence of positive integers $\lambda = (\lambda_1\ge  \dots\ge  \lambda_k)$ that adds up to $n$, i.e. $\lambda_1+\dots +\lambda_k = n$. If $\lambda$ is a partition of $n$ we write $\lambda\vdash n$ or $\abs*{\lambda} = n$.
\end{defn}
A partition $\lambda$ has an associated Young diagram, which has $\lambda_i$ boxes on row $i$ for $1\le i \le k$.  We will draw Young diagrams using English notation, e.g. the first row is on top, and the boxes in each row are left-justified. We will usually identify a partition with its diagram. The conjugate partition  of $\lambda$, denoted $\lambda^t$, is the partition whose Young diagram is obtained by flipping the Young diagram of $\lambda$ along the main diagonal (and thus interchanging rows and columns). A semistandard Young tableau (SSYT) of shape $\lambda$ is a filling of the boxes of $\lambda$ such that numbers in the same row are weakly increasing and numbers in the same column are strictly increasing. If $T$ has shape $\lambda$ we write $\sh(T) = \lambda$. A standard Young tableau (SYT) of shape $\lambda$ is a SSYT of shape $\lambda$ such that each number from $1$ to $|\lambda|$ appears exactly once. Let $\SSYT(\lambda)$ and $\SYT(\lambda)$ denotes the set of SSYT and SYT of shape $\lambda$, respectively. 
\begin{defn}
For each $\lambda$, let $s_\lambda$ denote the corresponding Schur function, that is 
\begin{equation}
    s_\lambda:= \sum\limits_{T\in \SSYT(\lambda)}x^T.
\end{equation}
\end{defn}
The Schur functions form a $\Z$-basis for $\Sym$ as a $\Z$-module. A symmetric function $f$ is said to be \textit{Schur-positive} if its expansion in the Schur basis has only nonnegative coefficients.  

Quasisymmetric functions are a generalization of symmetric functions. We say $f\in \Z[[\bfx]]$ is quasisymmetric if for all $i_1 < \dots < i_k$, all $j_1 < \dots < j_k$, and all $a_1, a_2, \dots, a_k> 0$, the coefficient of $x_{i_1}^{a_1}\cdots x_{i_k}^{a_k}$ in $f$ is the same as the coefficient of $x_{j_1}^{a_1}\cdots x_{j_k}^{a_k}$. Let $\QSym$ denotes the ring of quasisymmetric functions; i.e. quasisymmetric elements $f\in \Z[[\bfx]]$ with bounded degree.

\begin{defn}
A composition $\alpha$ of $n$ is a sequence of positive integers $\alpha= (\alpha_1, \dots, \alpha_k)$ such that $\alpha_1 + \dots + \alpha_k = n$. If $\alpha$ is a composition of $n$ we write $\alpha \vDash n$ or $\abs{\alpha}  = n$.
\end{defn}

There is an explicit bijection between compositions of $n$ and subsets of $[n-1]$, given by 
\begin{equation}
(\alpha_1, \dots, \alpha_k)\mapsto \{\alpha_1, \alpha_1 + \alpha_2 , \dots, \alpha_1 + \alpha_2 + \dots+ \alpha_{k-1}\}
\end{equation}
The subset of $[n-1]$ corresponding to $\alpha$ will be denoted $S_\alpha$. Given $S\subseteq [n-1]$, the corresponding composition of $n$ will be denoted $\alpha_S$.

\begin{defn}\label{def:fundamentalquasi}
For each subset $S\subseteq [n-1]$, let $F_{n,S}$ denote the corresponding fundamental quasisymmetric function:
\begin{equation}
    F_{n, S} := \sum_{\substack{i_1\le \dots \le i_n\\ i_j < i_{j+1}\text{ if }j\in S}}x_{i_1}\cdots x_{i_n}.
\end{equation}
\end{defn}
We may omit the $n$ and write $F_S$ if the $n$ is clear from context. The fundamental quasisymmetric functions form a $\Z$-basis for $\QSym$ as a $\Z$-module. Since the Schur functions are symmetric, they are quasisymmetric, and thus can be expanded in this basis. To explain exactly how they expand, we need the following definition.
\begin{defn}
    Given a SYT $Q$, define its descent set to be $\Des(Q):= \{i\mid i+1$ is on a lower row than $i\text{ in }Q\}$.
\end{defn}
\begin{thm}[{\cite[Theorem~7.19.7]{stanley}}]\label{thm:slambda} We have 
    \begin{equation}
        s_\lambda = \sum_{Q\in\SYT(\lambda)}F_{\Des(Q)}.
    \end{equation}
\end{thm}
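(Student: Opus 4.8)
The plan is to prove the identity directly from the combinatorial definition $s_\lambda = \sum_{T\in\SSYT(\lambda)}x^T$ by constructing a content-preserving bijection; this is the standard route and seems cleanest here. Write $n = \abs{\lambda}$. By Definition \ref{def:fundamentalquasi}, for a fixed $Q\in\SYT(\lambda)$ the function $F_{n,\Des(Q)}$ is the sum of $x_{i_1}\cdots x_{i_n}$ over all weakly increasing sequences $1\le i_1\le\cdots\le i_n$ with $i_j < i_{j+1}$ whenever $j\in\Des(Q)$. So it suffices to biject $\SSYT(\lambda)$ with the set of pairs $(Q,\mathbf i)$, where $Q\in\SYT(\lambda)$ and $\mathbf i = (i_1\le\cdots\le i_n)$ satisfies that descent condition, in such a way that the SSYT attached to $(Q,\mathbf i)$ has monomial $x_{i_1}\cdots x_{i_n}$; summing over all $Q$ and all admissible $\mathbf i$ then gives exactly $\sum_Q F_{n,\Des(Q)}$.

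For the bijection, from $(Q,\mathbf i)$ I would build the filling $T$ of $\lambda$ that places the entry $i_j$ in whichever cell carries the label $j$ in $Q$. In the reverse direction, from an SSYT $T$ I would take $Q := \std(T)$, the standardization obtained by replacing, for each value $v$ in increasing order, the cells of $T$ containing $v$ by the next block of consecutive integers, ordered by increasing column (the cells holding a fixed $v$ lie in distinct columns, so this is well defined); then $\mathbf i$ is recovered by reading the entries of $T$ off in the order prescribed by the labels of $Q$. Content is preserved by construction, so what remains is to verify: (i) the $T$ built from $(Q,\mathbf i)$ is semistandard; (ii) $\std(T)$ really is a standard Young tableau; (iii) the two maps are mutually inverse, which amounts to checking that $\std$ applied to the $T$ built from $(Q,\mathbf i)$ returns $Q$, and that the sequence extracted from an arbitrary $T$ meets the descent condition for $\std(T)$.

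The crux of all of these is one small lemma about standard Young tableaux: if $c'$ lies directly below $c$ in a column of $Q$, with $a = Q(c) < b = Q(c')$, then some $j$ with $a\le j < b$ is a descent of $Q$. This follows by contradiction: absent such a descent, the row indices of the cells labeled $a, a+1, \dots, b$ would be weakly decreasing, contradicting that $c'$ sits strictly below $c$. The lemma forces a strict increase in $\mathbf i$ between vertically adjacent cells, giving column-strictness of the constructed $T$, while row-weak-increase is immediate from $\mathbf i$ weakly increasing and rows of $Q$ increasing. For the reverse direction I would record the dual observation that among the cells of an SSYT holding a fixed value, those in lower rows occupy strictly smaller columns; together with the definition of $\std$ this simultaneously shows $\std(T)$ is standard and that every descent of $\std(T)$ forces $i_j < i_{j+1}$. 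The part demanding the most care — the main obstacle — is pinning down the standardization map and its interplay with the descent set: making the tie-breaking convention (increasing column within each value class) consistent across rows so that (iii) genuinely holds. Once that is watertight the rest is bookkeeping.
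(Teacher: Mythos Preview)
The paper does not supply a proof of this statement; it is quoted as background from \cite[Theorem~7.19.7]{stanley} and used as a black box. Your proposal is the standard bijective proof via standardization and is correct as outlined, including the key lemma that between any two vertically adjacent labels $a<b$ in a standard tableau there lies a descent. One small point worth writing out when you execute the argument is the claim that $\std$ applied to the filling $T$ built from $(Q,\mathbf i)$ recovers $Q$: this amounts to showing that among the cells of $Q$ labeled by a block $[a,b]$ on which $\mathbf i$ is constant, the column indices are strictly increasing in the label. You have all the ingredients for this (no descents in $[a,b-1]$ forces the rows to be weakly decreasing, and then a two-cell comparison with the cell $(r',s)$ rules out a non-increasing column step), but since you flagged it as the main obstacle it is worth spelling out rather than leaving to ``bookkeeping.''
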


\subsection{Symmetric and Schur-positive sets}
\begin{defn}
    Given an injective function $w\colon [n]\to \R$ (for example, a permutation in $S_n$), we define its descent set 
    \begin{equation}
    \Des(w):= \{i\mid w(i) > w(i+1)\}.
\end{equation}
\end{defn}
\begin{defn}
    Given a set $S\subseteq S_n$, we define its  quasisymmetric generating function 
\begin{equation}
    Q(S):= \sum_{w\in S}F_{\Des(w)}.
\end{equation}
\end{defn}
\begin{defn}
    A set $S\subseteq S_n$ is said to be \textit{symmetric} if $Q(S)$ is a symmetric function. It is said to be \textit{Schur-positive} if $Q(S)$ is a symmetric function that is Schur-positive.
\end{defn}
\begin{rmk}
    There is another more common definition of symmetric subsets of a group as follows. If $S\sbs G$ where $G$ is a group, then $S$ is said to be symmetric if $s^{-1}\in S$ for all $s\in S$. This is \textit{not} the definition used here.
\end{rmk}
\begin{prop}\label{prop:permcupsm}
    Let $A, B$ be subsets of $S_n$.
    \begin{itemize}
        \item Suppose that $A\cap B = \varnothing$. If $A$ and $B$ are symmetric, then $A\cup B$ is symmetric. If $A$ and $B$ are Schur-positive, then $A\cup B$ is Schur-positive. 
        \item If $A\supseteq B$ and $A$ and $B$ are symmetric, then $A\sm B$ is symmetric. In particular, $S_n\sm B$ is symmetric if $B$ is symmetric.
    \end{itemize}
    \begin{proof}
        The first claim follows from the fact that $Q(A\cup B) = Q(A)+Q(B)$ for $A\cap B = \varnothing$ and that symmetric and Schur-positive functions are closed under addition. The second claim follows from the fact that $Q(A\sm B) = Q(A)- Q(B)$ if $A\supseteq B$ and that symmetric functions are closed under subtraction. The claim about $S_n\sm B$ being symmetric follows from the fact that $S_n$ is symmetric (see, e.g. \cite[Theorem 1.2]{HZP}).
    \end{proof}
\end{prop}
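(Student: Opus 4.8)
The plan is to reduce all parts of the proposition to the single identity
\[ Q(A \cup B) = Q(A) + Q(B) \qquad \text{whenever } A \cap B = \varnothing, \]
which is immediate from the definition $Q(S) = \sum_{w \in S} F_{\Des(w)}$: a disjoint union simply splits the indexing set of the sum. I would first record this, together with its consequence that $Q(A \sm B) = Q(A) - Q(B)$ when $A \sps B$, obtained by applying additivity to the disjoint decomposition $A = B \cup (A \sm B)$.

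With these in hand, the first bullet becomes a closure statement for the ambient spaces: $\Sym$ is a subring of $\Z[[\bfx]]$, hence closed under addition, which handles the symmetric case; and a sum of two nonnegative $\Z$-linear combinations of Schur functions is again of that form, which handles the Schur-positive case. The second bullet uses that $\Sym$, being a ring, is also closed under subtraction, so $Q(A) - Q(B) \in \Sym$. There is no analogous Schur-positivity claim here, since a difference of Schur-positive functions need not be Schur-positive; this is why the proposition only asserts symmetry in that case, and I would flag that explicitly.

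For the last assertion, I would apply the second bullet with $A = S_n$, which requires that $S_n$ itself be symmetric. One can cite this as known (e.g.\ \cite[Theorem~1.2]{HZP}); a short self-contained proof goes through RSK: the correspondence $w \mapsto (P(w), Q(w))$ is a bijection from $S_n$ onto pairs of standard Young tableaux of equal shape, and it satisfies $\Des(w) = \Des(Q(w))$, so by Theorem \ref{thm:slambda},
\[ Q(S_n) = \sum_{\lambda \vdash n} \sum_{P, Q \in \SYT(\lambda)} F_{\Des(Q)} = \sum_{\lambda \vdash n} \abs*{\SYT(\lambda)}\, s_\lambda, \]
which is Schur-positive, hence symmetric.

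I do not anticipate any genuine obstacle: every step is routine bookkeeping, and the only external input — symmetry of $S_n$ — admits the short RSK argument just sketched. The one subtlety worth stating carefully is the asymmetry between the two bullets regarding Schur-positivity, so that later sections can invoke the symmetric-difference statement without mistakenly transferring it to the Schur-positive setting.
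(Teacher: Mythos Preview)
Your proposal is correct and follows essentially the same route as the paper: both proofs reduce to the additivity identity $Q(A\cup B)=Q(A)+Q(B)$ for disjoint $A,B$ (and its subtraction consequence), then invoke closure of $\Sym$ under addition/subtraction and of Schur-positive functions under addition, with the final assertion coming from the symmetry of $S_n$. Your added RSK argument for $Q(S_n)=\sum_{\lambda\vdash n}f^\lambda s_\lambda$ and your explicit remark on why Schur-positivity does not pass to differences are welcome elaborations but not a departure in approach.
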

\begin{defn}
    A pattern set $\Pi\subseteq \bigcup\limits_{k = 1}^\infty S_k$ is said to be \textit{symmetrically avoided} if $S_n(\Pi)$ is symmetric for all $n$. It is said to be \textit{Schur-positively avoided} if $S_n(\Pi)$ is Schur-positive for all $n$.
\end{defn}
\begin{defn}\label{def:permcr}
    Given $w\in S_n$, its complement $w^c$ is defined as follows: $w^c(i) = n+ 1 - w(i)$. Its reversal $w^r$ is defined as follows  $w^r(i) = w(n+1-i)$. 
\end{defn}

Note that taking complement and reversal are both involutions. Given a set $S\subseteq S_n$, we write $S^c:= \{w^c: w\in S\}$ and $S^r:= \{w^r: w\in S\}$.
\begin{prop}[{\cite[Proposition 2.1 and 2.4]{HZP}}]\label{prop:crpat}
    If $S_n(\Pi)$ is symmetric \textup{(}respectively, Schur-positive\textup{)}, then $S_n(\Pi^c)$ and $S_n(\Pi^r)$ are symmetric \textup(respectively, Schur-positive\textup).
\end{prop}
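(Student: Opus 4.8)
The plan is to reduce the statement to a short computation with fundamental quasisymmetric functions. Standardization commutes with complementation and reversal of finite sequences of distinct reals, so for a pattern $\pi$ a permutation $w\in S_n$ contains $\pi^c$ iff $w^c$ contains $\pi$, and contains $\pi^r$ iff $w^r$ contains $\pi$. Hence $S_n(\Pi^c) = S_n(\Pi)^c$ and $S_n(\Pi^r) = S_n(\Pi)^r$, and since complementation and reversal are involutions on $S_n$ it suffices to prove: for any $S\sbs S_n$, if $S$ is symmetric (resp.\ Schur-positive) then so are $S^c$ and $S^r$.

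Next I would do the descent-set bookkeeping. Directly from the definitions, $\Des(w^c) = [n-1]\sm\Des(w)$ and $\Des(w^r) = [n-1]\sm(n-\Des(w))$, where $n - T := \{n-i : i \in T\}$ for $T\sbs[n-1]$. Reindexing the sums defining $Q(S^c)$ and $Q(S^r)$ (again using that the two operations are bijections of $S_n$) gives
\[
Q(S^c) = \sum_{w\in S} F_{n,\,[n-1]\sm\Des(w)}, \qquad Q(S^r) = \sum_{w\in S} F_{n,\,[n-1]\sm(n-\Des(w))}.
\]
Introduce the linear maps $D, R\colon \QSym_n \to \QSym_n$ with $D(F_{n,T}) = F_{n,\,[n-1]\sm T}$ and $R(F_{n,T}) = F_{n,\,n-T}$ (well defined since the $F_{n,T}$ are a basis of $\QSym_n$); since reflection and complementation within $[n-1]$ commute, the displayed identities read $Q(S^c) = D(Q(S))$ and $Q(S^r) = (R\circ D)(Q(S))$.

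The crux is that $D$ restricts, on the subspace $\Sym_n\sbs\QSym_n$, to the classical involution $\omega$ with $\omega(s_\lambda) = s_{\lambda^t}$, while $R$ restricts to the identity on $\Sym_n$. Both follow from $s_\lambda = \sum_{Q\in\SYT(\lambda)} F_{\Des(Q)}$ (Theorem \ref{thm:slambda}): transposing standard Young tableaux is a bijection $\SYT(\lambda)\to\SYT(\lambda^t)$ carrying $\Des(Q)$ to $[n-1]\sm\Des(Q)$, which gives $D(s_\lambda) = s_{\lambda^t}$; and evacuation (the Sch\"utzenberger involution) is a bijection $\SYT(\lambda)\to\SYT(\lambda)$ carrying $\Des(Q)$ to $n-\Des(Q)$, which gives $R(s_\lambda) = s_\lambda$. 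Granting this and arguing by linearity: if $S$ is symmetric then $Q(S)\in\Sym_n$, so $Q(S^c) = \omega(Q(S))$ and $Q(S^r) = \omega(Q(S))$ are symmetric; and if $S$ is Schur-positive, writing $Q(S) = \sum_\lambda c_\lambda s_\lambda$ with all $c_\lambda\ge 0$ gives $Q(S^c) = Q(S^r) = \sum_\lambda c_\lambda s_{\lambda^t}$, which is Schur-positive.

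The only substantive input is the pair of tableau facts above: that transposition complements the descent set (a short case analysis on the position of $i+1$ relative to $i$ in a SYT) and that evacuation reflects it (classical; see \cite{stanley}). I expect the reversal case to be the one place to be careful: unlike the complement, it genuinely needs the evacuation statement --- equivalently, that the composition-reversal map $F_{n,T}\mapsto F_{n,\,n-T}$ fixes every symmetric function --- rather than merely transposition.
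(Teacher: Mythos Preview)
Your proof is correct. The paper does not supply its own proof of this proposition; it is quoted from \cite{HZP}, and the paper explicitly remarks that the set-level statement (Corollary~\ref{cor:crset}: $S^c$ and $S^r$ inherit symmetry and Schur-positivity from $S$) is implicit in that reference's argument. You proceed in the natural direction---first proving the set-level statement via the involutions $D$ and $R$ on $\QSym_n$ and their restrictions to $\Sym_n$, then specializing to $S = S_n(\Pi)$ using $S_n(\Pi^c)=S_n(\Pi)^c$ and $S_n(\Pi^r)=S_n(\Pi)^r$---while the paper records the reverse deduction (Corollary~\ref{cor:crset} from Proposition~\ref{prop:crpat} by taking $\Pi=S_n\sm S$). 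The content is the same either way. One minor remark: to see that $R$ fixes $\Sym_n$ you appeal to evacuation, which is correct but heavier than necessary; it suffices to note that $R$ sends $M_\alpha$ to $M_{\mathrm{rev}(\alpha)}$ and hence fixes each monomial symmetric function $m_\lambda$.
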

The following corollary is implicitly proved in the proof of Proposition \ref{prop:crpat} in \cite{HZP}. 
\begin{cor}\label{cor:crset}
    If $S\subseteq S_n$ is symmetric \textup(respectively, Schur-positive\textup), then $S^c$ and $S^r$ are symmetric \textup(respectively, Schur-positive\textup).
    \begin{proof}
        Let $T = S_n \sm S$. Then $S_n(T) = S$ and $S_n(T^c) = S^c$ and $S_n(T^r) = S^r$, so the claim follows by Proposition \ref{prop:crpat}.
    \end{proof}
\end{cor}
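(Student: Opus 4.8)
The plan is to derive this from the pattern-avoidance statement in Proposition~\ref{prop:crpat}, using the elementary observation that inside a fixed $S_n$, ``avoiding'' a set of patterns that all already lie in $S_n$ just means ``not being equal to'' any of them: for $w,\pi\in S_n$ the only length-$n$ subsequence of $w$ is $w$ itself and $\std(w)=w$, so $w$ contains $\pi$ if and only if $w=\pi$. Concretely, given a symmetric (resp.\ Schur-positive) set $S\sbs S_n$, I would set $T:=S_n\sm S$ and first note that $S_n(T)=S_n\sm T=S$. Next I would verify the bookkeeping identities $S_n(T^c)=S^c$ and $S_n(T^r)=S^r$: since $w\mapsto w^c$ is an involution of $S_n$ (Definition~\ref{def:permcr}), we have $w\in T^c\iff w^c\in T$, hence $w\in S_n\sm T^c\iff w^c\in S_n\sm T=S\iff w\in S^c$, and the reversal case is identical. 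Finally, applying Proposition~\ref{prop:crpat} with the pattern set $\Pi:=T$ yields that $S^c=S_n(T^c)$ and $S^r=S_n(T^r)$ are symmetric (resp.\ Schur-positive).

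I do not expect a genuine obstacle here: the statement is a formal consequence of Proposition~\ref{prop:crpat}, and the only point needing any care is the involution bookkeeping of the previous step, which uses nothing beyond the fact that $w\mapsto w^c$ and $w\mapsto w^r$ are involutive bijections of $S_n$.

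For completeness I would also record a proof that bypasses the pattern-avoidance machinery and argues directly in $\QSym$. Reindexing $Q(S^c)=\sum_{v\in S}F_{n,\Des(v^c)}$ and using $\Des(v^c)=[n-1]\sm\Des(v)$ shows that $Q(S^c)$ is the image of $Q(S)$ under the linear map of $\QSym$ sending $F_{n,U}$ to $F_{n,[n-1]\sm U}$; by Theorem~\ref{thm:slambda} together with the transpose bijection $\SYT(\lambda)\to\SYT(\lambda^t)$ (under which the descent set of a tableau gets complemented in $[|\lambda|-1]$), this map sends $s_\lambda$ to $s_{\lambda^t}$, hence preserves symmetry and Schur-positivity. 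Similarly $\Des(v^r)=\{\,n-j:j\in[n-1]\sm\Des(v)\,\}$, so $Q(S^r)$ is obtained from $Q(S)$ by following that map with the one sending $F_{n,U}$ to $F_{n,\{n-j\,:\,j\in U\}}$; and this second map fixes $\Sym$ pointwise, because Theorem~\ref{thm:slambda} together with Sch\"utzenberger's evacuation involution on $\SYT(\lambda)$ (which sends $\Des(Q)$ to $\{\,|\lambda|-i:i\in\Des(Q)\,\}$) shows it fixes each $s_\lambda$. Hence $Q(S^c)$ and $Q(S^r)$ are symmetric (resp.\ Schur-positive) whenever $Q(S)$ is.
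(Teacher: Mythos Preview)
Your primary argument is exactly the paper's proof: set $T=S_n\sm S$, observe $S_n(T)=S$, $S_n(T^c)=S^c$, $S_n(T^r)=S^r$, and invoke Proposition~\ref{prop:crpat}. The additional direct $\QSym$ argument you sketch (via $\Des(v^c)=[n-1]\sm\Des(v)$, $\Des(v^r)=\{n-j:j\in[n-1]\sm\Des(v)\}$, transpose on $\SYT$, and evacuation) is correct and gives an independent route that avoids the pattern-avoidance detour, but it is not needed for the corollary as stated.
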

\begin{cor}\label{cor:crpat}
    If $\Pi$ is symmetrically avoided \textup(respectively, Schur-positively avoided\textup), then $\Pi^c$ and $\Pi^r$ are symmetrically avoided \textup(respectively, Schur-positively avoided\textup).
    \begin{proof}
        This follows from the definition of symmetrically avoided, Schur-positively avoided, and Proposition \ref{prop:crpat}.
    \end{proof}
\end{cor}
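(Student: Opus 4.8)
The plan is to unwind the definitions and reduce immediately to Proposition~\ref{prop:crpat}. By definition, ``$\Pi$ is symmetrically avoided'' means that $S_n(\Pi)$ is symmetric for \emph{every} positive integer $n$, and ``$\Pi$ is Schur-positively avoided'' means $S_n(\Pi)$ is Schur-positive for every $n$. Thus the corollary is a statement quantified over all $n$, whereas Proposition~\ref{prop:crpat} concerns one fixed $n$ at a time, so the only thing to do is to apply the proposition uniformly.

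Concretely, I would fix an arbitrary $n$ and apply Proposition~\ref{prop:crpat} with this $n$: since $\Pi$ is symmetrically avoided, $S_n(\Pi)$ is symmetric, and the proposition then gives that $S_n(\Pi^c)$ and $S_n(\Pi^r)$ are symmetric. As $n$ was arbitrary, $S_n(\Pi^c)$ and $S_n(\Pi^r)$ are symmetric for all $n$, which is precisely the assertion that $\Pi^c$ and $\Pi^r$ are symmetrically avoided. The Schur-positive case is word-for-word the same, invoking the Schur-positive half of Proposition~\ref{prop:crpat}.

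There is essentially no obstacle here: all of the mathematical content (that $w$ avoids $\pi$ if and only if $w^c$ avoids $\pi^c$, and likewise for reversal, repackaged through the quasisymmetric generating function) is already carried by Proposition~\ref{prop:crpat}, and the step from ``for a fixed $n$'' to ``for all $n$'' is pure quantifier bookkeeping. If one preferred not to cite Proposition~\ref{prop:crpat} as a black box, an alternative route is to first verify the set identities $S_n(\Pi^c) = S_n(\Pi)^c$ and $S_n(\Pi^r) = S_n(\Pi)^r$ — which hold because complementing (respectively, reversing) a permutation complements (respectively, reverses) each of its subsequences, hence turns occurrences of $\pi$ into occurrences of $\pi^c$ (respectively, $\pi^r$) — and then apply Corollary~\ref{cor:crset} to the symmetric (respectively, Schur-positive) set $S_n(\Pi)$ for each $n$.
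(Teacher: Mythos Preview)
Your proposal is correct and follows essentially the same approach as the paper: the paper's proof simply says the corollary follows from the definitions and Proposition~\ref{prop:crpat}, and you have spelled out exactly that reduction by fixing an arbitrary $n$ and applying the proposition. The alternative route you mention via $S_n(\Pi^c) = S_n(\Pi)^c$ and Corollary~\ref{cor:crset} is also fine but not needed.
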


\subsection{The Robinson--Schensted correspondence and its properties}
The Robinson--Schensted correspondence is an explicit bijection between permutations in $w\in S_n$ and pairs $(P, Q)$ of tableaux of the same shape of size $n$.
\begin{prop}\label{prop:rskprop}
    Suppose that $w\in S_n$ maps to $(P, Q)$ under the Robinson--Schensted correspondence. 
    \begin{itemize}
        \item If $w^c\mapsto (P', Q')$ under the Robinson--Schensted correspondence then $Q' = Q^t$.
        \item $\Des(w) = \Des(Q)$.
    \end{itemize}
\end{prop}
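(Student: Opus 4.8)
I would handle the two bullets separately, as each is a classical fact about the Robinson--Schensted correspondence.

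\emph{The descent bullet ($\Des(w)=\Des(Q)$).} The tool here is the standard Row Bumping Lemma for row insertion (see e.g.\ \cite{stanley}): if one row-inserts $x$ into a semistandard tableau and then row-inserts $x'$ into the result, the new cell created by the second insertion lies strictly to the right of, and weakly above, the new cell of the first insertion when $x<x'$, and strictly below and weakly to the left of it when $x>x'$. Build $Q$ by inserting $w(1),w(2),\dots,w(n)$ in order, so that the entry $i$ of $Q$ occupies the cell created at step $i$. Applying the lemma to the consecutive insertions of $w(i)$ and $w(i+1)$ shows that the cell of $i+1$ lies in a strictly lower row than the cell of $i$ exactly when $w(i)>w(i+1)$, i.e.\ exactly when $i\in\Des(w)$; and since a recording tableau only ever acquires new cells, later insertions do not disturb these two cells. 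Hence $\Des(Q)=\Des(w)$.

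\emph{The complement bullet ($Q'=Q^t$).} Let $\lambda^{(i)}$ be the shape of the insertion tableau after inserting $w(1),\dots,w(i)$, and $\mu^{(i)}$ the corresponding shape for $w^c$; then $Q$ has $i$ in the single cell $\lambda^{(i)}/\lambda^{(i-1)}$ and $Q'$ has $i$ in $\mu^{(i)}/\mu^{(i-1)}$. Since adding a single cell to a Young diagram commutes with conjugation, it is enough to prove $\mu^{(i)}=(\lambda^{(i)})^t$ for every $i$; this forces $Q'$ to be the conjugate of $Q$ cell by cell, i.e.\ $Q'=Q^t$. The shape $\lambda^{(i)}$ depends only on the standardization of the prefix $w(1)\cdots w(i)$, and $\std$ commutes with taking complements, so $\mu^{(i)}$ is the insertion shape of the complement of that pattern. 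Now apply Greene's theorem: $\lambda^{(i)}_1+\dots+\lambda^{(i)}_k$ is the largest size of a union of $k$ increasing subsequences of $w(1)\cdots w(i)$, while $(\lambda^{(i)})^t_1+\dots+(\lambda^{(i)})^t_k$ is the same quantity for decreasing subsequences. Complementing a word is a bijection between its increasing subsequences and the decreasing subsequences of its complement, so these two families of partial sums are exchanged, and therefore $\mu^{(i)}=(\lambda^{(i)})^t$.

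\emph{Alternatives and the main obstacle.} There is no genuinely new difficulty: the statement just packages standard RSK theory. One could replace the Greene-theoretic argument by Fomin's growth-diagram description of RSK, in which complementing $w$ reflects the diagram and the reflect-and-transpose symmetry of the local growth rules transposes the recording tableau; or by the identities $Q(w^c)=P((w^c)^{-1})$, $(w^c)^{-1}=(w^{-1})^r$, and $P(u^r)=P(u)^t$, which combine to give $Q(w^c)=P((w^{-1})^r)=P(w^{-1})^t=Q(w)^t$. The only real decisions are which of these equivalent black boxes to cite; the only point needing a line of care is the reduction in the complement bullet, namely that matching the \emph{whole} chains of insertion shapes up to conjugation is exactly what is needed to conclude $Q'=Q^t$. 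If one instead insists on a fully self-contained treatment, the substantive work is then pushed into proving whichever classical input one selects (Greene's theorem, or the fundamental symmetry of RSK together with $P(u^r)=P(u)^t$).
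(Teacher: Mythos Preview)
The paper does not actually supply a proof of this proposition: it is stated as a standard fact about the Robinson--Schensted correspondence and simply used where needed. Your proposal fills that gap with correct, classical arguments---the Row Bumping Lemma for the descent statement and Greene's theorem (with well-known alternatives) for the complement statement---so there is nothing to compare against, and your treatment is entirely appropriate for what the paper takes for granted.
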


\section{Harmonic set systems}\label{sec:har}

\begin{defn}
    A \textit{set system} $H$ is a pair $H = (U, \Fcal),$ where $U$ is a set and $\Fcal = (A_1, \dots, A_m)$ is an ordered tuple of subsets of $U$. We call $U$ the universe set of $H$.
\end{defn}
\begin{defn}\label{def:symm2harset}
    Using the idea of Marmor \cite{M25}, given a set of permutations $S \subseteq S_n$, we can define an associated set system $\pa{S, \pa{A_1, \dots, A_{n-1}}}$ as follows.
\begin{equation}
    A_i := \{\pi\in S\mid i\in \Des(\pi)\} = \{\pi\in S\mid \pi(i) > \pi(i+1)\}
\end{equation}

We will denote this associated set system $A(S)$.
\end{defn}
\begin{rmk}
    Note that this slightly differs from the set system defined by Marmor \cite[Lemma 4.3]{M25}. The elements of the sets defined here are the permutations themselves, while in \cite{M25} the elements are indices instead (and thus rely on a choice of indexing the permutations in $S$.) Furthermore, we define $A_i$ to be the permutations that have a descent at $i$, rather than an ascent at $i$ which is what was done in \cite{M25}. Thus, our set system will be isomorphic to the complement of the one defined in \cite{M25} (we will define ``isomorphic'' and ``complement'' below). In view of \Cref{cor:crset}, \Cref{prop:comsetperm}, and \Cref{cor:comharset}, this is simply a matter of convention.
\end{rmk}
\begin{defn}[Isomorphic set systems]
    Two set systems $H_1 = (U,(A_1, \dots, A_m))$ and\\ $H_2  = (V,(B_1, \dots, B_m))$ are said to be \textit{isomorphic} if there exists a bijection $\sigma \colon U\to V$
    such that $\sigma(A_i) = B_i$ for all $i$. We write $H_1\cong H_2$ to denote that $H_1$ and $H_2$ are isomorphic.
\end{defn}
\begin{rmk}
    Note that we allow reordering of the elements, but not the sets. This is because the ordering of sets matters for Definition \ref{def:harmonicset} below.
\end{rmk}
\begin{defn}[Complement set systems]
    Given a set system $H =(U,\Fcal)$ with $\Fcal = (A_1, \dots, A_m)$, we define the complement of $H$ to be  $\ove{H} = (U, \ove{\Fcal})$ where 
    \begin{equation}
        \ove{\Fcal} := \pa{\ove{A_1}, \dots, \ove{A_m}},
    \end{equation}
    where $\ove{A_i} := U\sm A_i$.
\end{defn}
The following Proposition shows that this definition of complement behaves well with taking complement of permutation as in Definition \ref{def:permcr}.
\begin{prop}\label{prop:comsetperm}
    We have 
    \begin{equation}
        \ove{A(S)} \cong A(S^c).
    \end{equation}
    \begin{proof}
        Let $A(S)= (A_1, \dots, A_{n-1})$, and $A(S^c) = (B_1, \dots, B_{n-1})$. It suffices to show that $(S\sm A_i)^c = B_i$. Note that we have 
        \begin{align}
            (S\sm A_i)^c &= \{w\in S\mid w\notin A_i\}^c = \{w\in S\mid w(i) < w(i+1)\}^c\notag\\
            & = \{w\in S\mid w^c(i) > w^c(i+1)\}^c = \{w^c\in S\mid w(i) > w(i+1)\}\\
            &= \{w\in S^c\mid w(i)> w(i+1)\} = B_i. \qedhere\notag
        \end{align}
    \end{proof}
\end{prop}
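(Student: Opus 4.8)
The plan is to produce an explicit isomorphism of set systems. Write $A(S) = (S, (A_1, \dots, A_{n-1}))$ and $A(S^c) = (S^c, (B_1, \dots, B_{n-1}))$, so that by definition $\ove{A(S)} = (S, (\ove{A_1}, \dots, \ove{A_{n-1}}))$ has universe $S$ with $\ove{A_i} = S \sm A_i$. By the definition of isomorphic set systems, it is enough to exhibit a bijection $\sigma \colon S \to S^c$ satisfying $\sigma(\ove{A_i}) = B_i$ for all $i \in [n-1]$. The natural choice is the complement map $\sigma(w) := w^c$; since $w \mapsto w^c$ is an involution on $S_n$ (Definition \ref{def:permcr}), it restricts to a bijection from $S$ onto $S^c$. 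With this choice, the whole statement collapses to the single set identity $(S \sm A_i)^c = B_i$.

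To verify this identity I would unwind the definitions of the descent set and of the permutation complement. A permutation $w$ belongs to $\ove{A_i} = S \sm A_i$ exactly when $w \in S$ and $w(i) < w(i+1)$; since $w^c(j) = n+1-w(j)$, the complement reverses this inequality, so $w^c(i) > w^c(i+1)$, that is $i \in \Des(w^c)$, while $w \in S$ is equivalent to $w^c \in S^c$. Hence $w^c \in B_i$, giving $(S \sm A_i)^c \subseteq B_i$. Every equivalence used here is reversible --- complement is an involution, so each element of $S^c$ is $w^c$ for a unique $w \in S$, and $w(i) < w(i+1) \Leftrightarrow w^c(i) > w^c(i+1)$ in both directions --- so the opposite inclusion follows in the same way, and $(S \sm A_i)^c = B_i$.

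I do not expect any real obstacle: the argument is pure bookkeeping. The only points that need care are tracking the direction of the descent inequality as one passes through the complement, and remembering that $w \in S \Leftrightarrow w^c \in S^c$ under the bijection $w \mapsto w^c$. This clean match between complementing permutations and complementing set systems is precisely why Definition \ref{def:symm2harset} records the descents of $\pi$ (rather than the ascents) in each $A_i$; it is what makes Proposition \ref{prop:comsetperm} a one-line verification rather than something requiring a correction term.
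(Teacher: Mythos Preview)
Your proposal is correct and follows essentially the same route as the paper: you explicitly name the bijection $\sigma(w)=w^c$ and reduce to the identity $(S\sm A_i)^c=B_i$, then verify it by unwinding the definitions of $A_i$, $B_i$, and the complement map. The paper's proof is the same chain of equalities, just presented as a single computation without isolating $\sigma$ by name.
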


\begin{defn}
    A set system $H = (U, (A_1, \dots, A_m))$ is said to be \textit{reduced} if 
    \begin{equation}
        \bigcap_{i = 1}^m A_i = \varnothing.
    \end{equation} It is said to be \textit{complete} if \begin{equation}
        \bigcup_{i = 1}^m A_i = U.
    \end{equation}
\end{defn}

\begin{prop}\label{prop:conid}
    A set $S\subseteq S_n$ contains $\delta_n$ if and only if $A(S) = (A_1, \dots, A_{n-1})$ is not reduced. Moreover, $S$ contains $\iota_n$ if and only if $A(S)$ is not complete.
    \begin{proof}
        If $\delta_n\in S$, then $\delta_n\in A_i$ for all $i$, so $A(S)$ is not reduced. Conversely, if $A(S)$ is not reduced, then there exists $\pi\in S$ such that $\pi\in A_i$ for all $i$, which implies $\pi(i)> \pi(i+1)$ for all $i$, so $\pi = \delta_n$.

        If $\iota_n\in S$ then $\iota_n\notin A_i$ for all $i$, so 
        \begin{equation}\bigcup\limits_{i = 1}^{n-1} A_i \subsetneq S.
        \end{equation}
        Conversely, if $A(S)$ is not complete, then there exists $\pi\in S$ such that $\pi\notin A_i$ for all $i$, which implies $\pi(i)< \pi(i+1)$ for all $i$, so $\pi = \iota_n$.
    \end{proof}
\end{prop}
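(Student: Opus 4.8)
The plan is simply to unwind the definitions, so the ``proof'' is essentially a translation exercise. Recall that the associated set system is $A(S) = (S,(A_1,\dots,A_{n-1}))$ with $A_i = \{\pi\in S\mid \pi(i) > \pi(i+1)\}$. Thus a permutation $\pi\in S$ lies in $\bigcap_{i=1}^{n-1} A_i$ exactly when $\pi(i) > \pi(i+1)$ for every $i\in\{1,\dots,n-1\}$, and $\pi$ lies outside $\bigcup_{i=1}^{n-1} A_i$ exactly when $\pi(i) < \pi(i+1)$ for every such $i$ (the inequality being strict because a permutation never repeats a value, so $\pi(i)\le\pi(i+1)$ and $\pi(i)<\pi(i+1)$ are the same condition here).

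For the first equivalence I would argue as follows. By definition $A(S)$ fails to be reduced precisely when $\bigcap_{i=1}^{n-1}A_i\neq\varnothing$, i.e.\ when there is some $\pi\in S$ with $\pi(1)>\pi(2)>\cdots>\pi(n)$. Since $\pi$ is a bijection from $\{1,\dots,n\}$ to itself, the only sequence with this property is $n,n-1,\dots,1$, namely $\pi=\delta_n$. Hence $A(S)$ is not reduced if and only if $\delta_n\in S$. For the second equivalence, $A(S)$ fails to be complete precisely when $\bigcup_{i=1}^{n-1}A_i\subsetneq S$, i.e.\ when some $\pi\in S$ satisfies $\pi(i)<\pi(i+1)$ for all $i$; the unique permutation with that property is $\iota_n=1,2,\dots,n$, so $A(S)$ is not complete if and only if $\iota_n\in S$.

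I do not expect any genuine obstacle: both statements follow directly from the definitions of ``reduced'' and ``complete'' applied to $A(S)$, the only substantive (and entirely elementary) input being that within $S_n$ a permutation that decreases at every adjacent pair must equal $\delta_n$, and one that increases at every adjacent pair must equal $\iota_n$. The only points needing a moment's attention are correctly matching ``not reduced'' to the nonempty-intersection condition and ``not complete'' to the proper-containment-of-the-union condition, and the remark above that for permutations the weak and strict versions of $\pi(i)\le\pi(i+1)$ coincide.
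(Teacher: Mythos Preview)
Your proposal is correct and follows essentially the same approach as the paper's proof: both simply unwind the definitions of reduced and complete for $A(S)$ and observe that the unique strictly decreasing (respectively increasing) permutation in $S_n$ is $\delta_n$ (respectively $\iota_n$).
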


\begin{defn}
    Given a set $I$ of positive integers, we define its \textit{run decomposition} as follows. The runs in $I$ are the maximal sequences of consecutive integers in $I$. The sizes of the runs sorted in nonincreasing order will make a partition of $|I|$, which we will call the run decomposition of $I$.
\end{defn}
\begin{ex}
    If $I = \{6, 2, 4,9, 1 , 5\}$, then the runs are $(1, 2), (4, 5, 6),(9)$. These have sizes of $2, 3, 1$ respectively, so the run decomposition of $I$ is the partition $(3, 2, 1)$.
\end{ex}
\begin{defn}\label{def:setinter}
    Given a set system $H = (U, (A_1, \dots, A_m)), $ and given $I_1, I_2\sbs [m]$, we define 
    \begin{equation}
        H_{I_1, I_2} := \bigcap_{i_1\in I_1}A_{i_1}\cap{\bigcap_{i_2\in I_2}\ove{A_{i_2}}},
    \end{equation}
    with the convention that the corresponding intersection is $U$ if $I_1$ or $I_2$ is empty. We write $H_I$ for $H_{I, \varnothing}.$ 
\end{defn}

\begin{defn}\label{def:harmonicset}
    A set system $H = \pa{U, \pa{A_1, \dots, A_m}}$ is said to be \textit{harmonic} if for any sets of indices $I$ and $J$ having the same run decomposition, we have $\abs*{H_I} = \abs*{H_J}.$
\end{defn}
The point of Definition \ref{def:harmonicset} is the following proposition.
\begin{prop}\label{prop:symm2harset}
    A set $S\subseteq S_n$ is symmetric if and only if $A(S)$ is harmonic.
\end{prop}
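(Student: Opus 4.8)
The plan is to expand $Q(S)$ in the monomial quasisymmetric basis, extract the criterion for symmetry, and then translate that criterion into the language of set systems.

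\textbf{Step 1: Monomial expansion.} For a composition $\alpha=(\alpha_1,\dots,\alpha_k)$ write $M_\alpha:=\sum_{i_1<\dots<i_k}x_{i_1}^{\alpha_1}\cdots x_{i_k}^{\alpha_k}$ for the monomial quasisymmetric function. Grouping the monomials $x_{i_1}\cdots x_{i_n}$ occurring in $F_{n,T}$ (Definition \ref{def:fundamentalquasi}) according to the set $U=\{j\mid i_j<i_{j+1}\}\sps T$ of their strict ascents gives the standard identity $F_{n,T}=\sum_{T\sbs U\sbs[n-1]}M_{\alpha_U}$. Summing over $w\in S$ and collecting coefficients yields
\[ Q(S)=\sum_{T\sbs[n-1]}a_T\,M_{\alpha_T},\qquad a_T:=\#\{w\in S\mid \Des(w)\sbs T\}. \]
Since the monomial symmetric functions $m_\lambda=\sum_{\lambda(\alpha)=\lambda}M_\alpha$ (where $\lambda(\alpha)$ is the partition obtained by sorting the parts of $\alpha$) form a $\Z$-basis of $\Sym$, a quasisymmetric function $\sum_\alpha c_\alpha M_\alpha$ lies in $\Sym$ if and only if $c_\alpha$ depends only on $\lambda(\alpha)$. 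Hence $S$ is symmetric if and only if $a_T$ depends only on $\lambda(\alpha_T)$.

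\textbf{Step 2: Reformulation via set systems.} I would first rewrite $a_T$ as an intersection size: since $\Des(w)\sbs T$ is equivalent to $w\notin A_i$ for every $i\in[n-1]\sm T$, we have $a_T=\abs{A(S)_{\varnothing,\,[n-1]\sm T}}$ in the notation of Definition \ref{def:setinter}. The combinatorial heart of the argument is the claim that, writing $I:=[n-1]\sm T$ and letting $(\mu_1\ge\dots\ge\mu_\ell)$ be the run decomposition of $I$, the partition $\lambda(\alpha_T)$ is obtained by increasing each $\mu_j$ by $1$ and then appending parts equal to $1$ until the parts sum to $n$; moreover this rule is a bijection between run decompositions of subsets of $[n-1]$ and partitions of $n$. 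Granting this, $\lambda(\alpha_{T_1})=\lambda(\alpha_{T_2})$ holds exactly when $[n-1]\sm T_1$ and $[n-1]\sm T_2$ have the same run decomposition, so the condition from Step 1 becomes: $\abs{A(S)_{\varnothing,I}}$ depends only on the run decomposition of $I$.

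\textbf{Step 3: Passing to complements.} Definition \ref{def:harmonicset} is stated in terms of $H_I=H_{I,\varnothing}$ rather than $H_{\varnothing,I}$, so I would transfer across complementation. An isomorphism of set systems is a bijection of universes carrying each $A_i$ to $B_i$, hence each $\ove{A_i}$ to $\ove{B_i}$, and therefore preserves the cardinality of every set $H_{I_1,I_2}$. By Proposition \ref{prop:comsetperm}, $\ove{A(S)}\cong A(S^c)$, and since $\ove{A(S)}_{I}=A(S)_{\varnothing,I}$ this gives $\abs{A(S)_{\varnothing,I}}=\abs{A(S^c)_I}$. Thus Step 2 reads: $S$ is symmetric if and only if $A(S^c)$ is harmonic. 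Applying this with $S$ replaced by $S^c$ and using Corollary \ref{cor:crset} ($S$ is symmetric iff $S^c$ is), we conclude that $S$ is symmetric if and only if $A(S)$ is harmonic.

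The $F$-to-$M$ expansion and the monomial criterion for symmetry in Step 1 are routine. The step I expect to require the most care is the combinatorial lemma in Step 2: verifying the $\pm1$ bookkeeping that turns a run of size $\mu$ in $[n-1]\sm T$ into a part $\mu+1$ of $\alpha_T$ (including runs meeting the endpoints $1$ or $n-1$), correctly accounting for the remaining parts (all equal to $1$), and checking that the resulting map from run decompositions of subsets of $[n-1]$ to partitions of $n$ is genuinely a bijection.
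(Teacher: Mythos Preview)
Your proposal is correct and follows essentially the same route as the paper. The paper cites the symmetry criterion $|S(\alpha)|=|S(\beta)|$ for $\alpha\sim\beta$ from \cite{M25} (Proposition~\ref{prop:symmcond}) whereas you re-derive it via the $F$-to-$M$ expansion, and the paper complements $S$ first (computing $|A(S)_I|=|S^c(\alpha_{I^c})|$ directly) whereas you compute $|A(S)_{\varnothing,I}|$ first and then pass through $\ove{A(S)}\cong A(S^c)$; but the combinatorial core---your Step~2 lemma relating the run decomposition of $[n-1]\sm T$ to $\lambda(\alpha_T)$---is exactly Proposition~\ref{prop:rund2sim}, and the overall logic is the same.
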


To prove Proposition \ref{prop:symm2harset}, we need the following definitions and propositions:
\begin{defn}[{\cite[Definition 4.1]{M25}}]
    Given a permutation $w\in S_n$ and a composition $\alpha = (\alpha_1, \dots, \alpha_k)$ of $n$, we say $w$ respects $\alpha$ if it is increasing along the segments of $\alpha$; that is, for any $1\le i\le k$ we have $w(b_{i-1}+1) < w(b_{i-1}+2) < \dots < w(b_{i}) $ where $b_i = a_1 +\dots +a_i$ (and so $b_0 = 0$). Equivalently, $\Des(w)\subseteq S_\alpha$. Given $S\subseteq S_n$ and $\alpha \vDash n$, we write 
    \begin{equation}
        S(\alpha):= \{w\in S\mid w\text{ respects }\alpha\}.
    \end{equation}
\end{defn}
\begin{defn}[{\cite[Definition 2.1]{M25}}]
    For two compositions $\alpha, \beta\vDash n$, we say $\alpha$ is equivalent to $\beta$ and write $\alpha\sim\beta$ if they are rearrangements of each other. Equivalently, the partition obtained by sorting them in descending order is the same.
\end{defn}
\begin{prop}[{\cite[Corollary 2.3]{M25}}]\label{prop:symmcond}
    A set $S\subseteq S_n$ is symmetric if and only if $|S(\alpha)| = |S(\beta)|$ for all $\alpha\sim \beta$.
\end{prop}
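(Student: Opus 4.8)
The plan is to derive this from Proposition~\ref{prop:symmcond} by reformulating both ``$S$ is symmetric'' and ``$A(S)$ is harmonic'' as the assertion that a certain integer-valued function on subsets of $[n-1]$ is constant on sets with a common run decomposition, and then to show that the two functions so obtained have the same such property because they are related by an involutive M\"obius-type transform.

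First I would set $H := A(S)$ and, for $I \sbs [n-1]$, write $\rho(I)$ for the run decomposition of $I$, $f(I) := \abs{H_I} = \abs{\{w\in S : I\sbs \Des(w)\}}$ (so $f(\varnothing) = \abs{S}$), and $h(I) := \abs{H_{\varnothing, I}} = \abs{\{w\in S : \Des(w)\cap I = \varnothing\}}$, using the notation of Definition~\ref{def:setinter}. By Definition~\ref{def:harmonicset}, $H$ is harmonic iff $f(I)$ depends only on $\rho(I)$. For the other side: if $\alpha\vDash n$ is the composition with $S_\alpha = [n-1]\sm I$, then $w$ respects $\alpha$ iff $\Des(w)\sbs S_\alpha$ iff $\Des(w)\cap I = \varnothing$, so $h(I) = \abs{S(\alpha)}$. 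The combinatorial fact I would verify is that, for $\alpha,\beta\vDash n$, one has $\alpha\sim\beta$ iff $[n-1]\sm S_\alpha$ and $[n-1]\sm S_\beta$ have the same run decomposition: a maximal run of size $r$ in $[n-1]\sm S_\alpha$ is the same datum as a pair of consecutive elements of $S_\alpha\cup\{0,n\}$ at distance $r+1$, so the multiset of parts of $\alpha$ that are at least $2$, each decreased by $1$, equals $\rho([n-1]\sm S_\alpha)$, and the number of parts of $\alpha$ equal to $1$ is recovered as $n$ minus the total size and the number of runs of $[n-1]\sm S_\alpha$. Together with Proposition~\ref{prop:symmcond}, this shows $S$ is symmetric iff $h(I)$ depends only on $\rho(I)$.

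It then remains to prove that $f(I)$ depends only on $\rho(I)$ iff $h(I)$ does. By inclusion--exclusion on the universe $S$, $h(I) = \sum_{K\sbs I}(-1)^{\abs{K}}f(K)$, and M\"obius inversion on the Boolean lattice yields the symmetric relation $f(I) = \sum_{K\sbs I}(-1)^{\abs{K}}h(K)$; so it suffices to prove one implication, say that $f$ being a function of $\rho$ forces $h$ to be one. Write $I = R_1\sqcup\cdots\sqcup R_\ell$ as the disjoint union of its runs, with $\abs{R_j} = \mu_j$. Every $K\sbs I$ decomposes uniquely as $K = K_1\sqcup\cdots\sqcup K_\ell$ with $K_j\sbs R_j$, and since the $R_j$ are pairwise nonadjacent no run of $K$ straddles two of them, so $\rho(K)$ is the multiset union $\rho(K_1)\sqcup\cdots\sqcup\rho(K_\ell)$. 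Working in the (commutative) monoid algebra of the monoid of partitions under disjoint union, put $P_m := \sum_{K\sbs R}(-1)^{\abs{K}}[\rho(K)]$ for $R$ a block of $m$ consecutive integers; this depends only on $m$, and expanding the product shows $\sum_{K\sbs I}(-1)^{\abs{K}}[\rho(K)] = \prod_{j=1}^\ell P_{\mu_j}$, which depends only on the multiset $\{\mu_1,\dots,\mu_\ell\} = \rho(I)$. Applying the linear functional sending $[\lambda]$ to the common value of $f$ on sets with run decomposition $\lambda$ turns the left-hand side into $h(I)$, which is therefore a function of $\rho(I)$ alone; the reverse implication is the same argument with $f$ and $h$ interchanged, and assembling the equivalences completes the proof.

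The step I expect to be the main obstacle is this last equivalence: the functional attached to $f$ is not multiplicative under disjoint union of partitions, so one cannot simply factor $h(I)$ over the runs of $I$ numerically. The remedy is to perform the factorization at the level of formal partitions inside the monoid algebra, where multiplicativity is automatic, and only then evaluate. The correspondence between $\sim$-classes of compositions and run decompositions of complements is routine but needs the bookkeeping indicated above, in particular the recovery of the multiplicity of the part $1$.
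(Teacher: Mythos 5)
Your argument does not prove the statement it was asked to prove: the statement \emph{is} Proposition \ref{prop:symmcond}, and your plan explicitly invokes Proposition \ref{prop:symmcond} as an ingredient (``Together with Proposition \ref{prop:symmcond}, this shows $S$ is symmetric iff $h(I)$ depends only on $\rho(I)$''), so as a proof of that proposition it is circular. What you actually establish --- correctly, as far as I can see --- is Proposition \ref{prop:symm2harset}, the equivalence between symmetry of $S$ and harmonicity of $A(S)$: your identification $h(I)=\abs{S(\alpha)}$ for $S_\alpha=[n-1]\sm I$, your dictionary between $\sim$-classes of compositions and run decompositions of complements (which is Proposition \ref{prop:rund2sim} in the paper), and your M\"obius-inversion-plus-monoid-algebra factorization over runs give a proof of that other result that differs mildly from the paper's (the paper passes to $S^c$ via Corollary \ref{cor:crset} and gets $\abs{A(S)_I}=\abs{S^c(\alpha_{I^c})}$ directly, with no inclusion--exclusion). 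But none of this bears on the statement at hand.

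Note that the paper itself gives no proof of Proposition \ref{prop:symmcond}; it is quoted from \cite[Corollary 2.3]{M25}. A genuine proof must go back to the definition of $Q(S)$ rather than to the set system $A(S)$: by Definition \ref{def:fundamentalquasi}, the coefficient of the monomial $x_1^{\alpha_1}\cdots x_k^{\alpha_k}$ in $F_{n,D}$ is $1$ exactly when $D\sbs S_\alpha$, so this coefficient in $Q(S)$ equals $\abs{S(\alpha)}$; since $Q(S)$ is quasisymmetric, the coefficient of any monomial $x_{i_1}^{\alpha_1}\cdots x_{i_k}^{\alpha_k}$ with $i_1<\dots<i_k$ is the same number, and $Q(S)$ is symmetric precisely when these coefficients are invariant under rearranging the exponent sequence, i.e.\ when $\abs{S(\alpha)}=\abs{S(\beta)}$ for all $\alpha\sim\beta$. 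The harmonic-set-system machinery you set up cannot substitute for this computation, because Definition \ref{def:harmonicset} never interacts with the quasisymmetric function itself; the bridge between the two is exactly the proposition you were asked to prove.
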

\begin{prop}\label{prop:rund2sim}
    Given $I\subseteq [n-1]$, let $I^c$ denote $[n-1]\sm I$. Then $I, J\subseteq [n-1]$ have the same run decomposition if and only if $\alpha_{I^c}\sim \alpha_{J^c}$.
    \begin{proof}
        Note that a run of length $t$ in $I$ gives a segment of length $t+1$ in $\alpha_{I^c}$: if $a+1, \dots, a+t$ is a run of length $t$ in $I$ then $a, a+t+1$ are in $I^c\cup\{0, n\}$, so this gives a segment of length $a+t-1 - a = t+1$ in the composition. Thus, if $\lambda = (\lambda_1, \dots, \lambda_k)$ is the run decomposition of $I$ then $(\lambda_1+1, \lambda_2+1, \dots, \lambda_k+1, 1^{n-|\lambda|-k})$ is the partition obtained by sorting $\alpha_{I^c}$. As a result, $I, J\subseteq [n-1]$ have the same run decomposition if and only if $\alpha_{I^c}\sim \alpha_{J^c}$.
    \end{proof}
\end{prop}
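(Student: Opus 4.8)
\textbf{Proof proposal for Proposition \ref{prop:rund2sim}.}

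The plan is to establish the biconditional by first proving a precise structural correspondence between the runs of a set $I \subseteq [n-1]$ and the parts of the composition $\alpha_{I^c}$, and then observing that ``same run decomposition'' and ``$\sim$'' are both captured by the same underlying multiset. First I would fix $I \subseteq [n-1]$ and write $I^c = [n-1]\sm I = \{b_1 < b_2 < \dots < b_{k-1}\}$, so that by the bijection between subsets of $[n-1]$ and compositions of $n$ we have $\alpha_{I^c} = (b_1, b_2 - b_1, \dots, b_{k-1} - b_{k-2}, n - b_{k-1})$, a composition of $n$ with $k$ parts. Setting $b_0 = 0$ and $b_k = n$ for convenience, the parts of $\alpha_{I^c}$ are exactly the gaps $b_j - b_{j-1}$ for $1 \le j \le k$.

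The key step is to show that a part of size $t$ in $\alpha_{I^c}$ corresponds to a maximal run of size $t - 1$ in $I$ (where a part of size $1$ corresponds to an ``empty run'', i.e.\ contributes nothing to the run decomposition). Indeed, a part $b_j - b_{j-1} = t$ means $b_{j-1}$ and $b_j$ are consecutive elements of $I^c \cup \{0, n\}$, so the integers $b_{j-1}+1, b_{j-1}+2, \dots, b_j - 1$ all lie in $I$ and form a maximal run there of length $b_j - 1 - b_{j-1} = t - 1$; conversely every maximal run of $I$ arises this way, bracketed by two consecutive elements of $I^c \cup \{0, n\}$. Hence, if $\lambda = (\lambda_1 \ge \dots \ge \lambda_\ell)$ is the run decomposition of $I$ (so $\ell$ is the number of runs and $\lambda_1 + \dots + \lambda_\ell = |I|$), then the multiset of parts of $\alpha_{I^c}$ is exactly $\{\lambda_1 + 1, \dots, \lambda_\ell + 1\}$ together with some number of $1$'s; since $\alpha_{I^c} \vDash n$ has $k = (n-1) - |I| + 1 = n - |I|$ parts, the number of $1$'s is $k - \ell = n - |I| - \ell$, matching the stated formula $(\lambda_1 + 1, \dots, \lambda_\ell + 1, 1^{\,n - |\lambda| - \ell})$ after sorting.

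Finally I would close the argument: the sorted multiset of parts of $\alpha_{I^c}$ is determined by, and determines, the pair $(\lambda, |I|)$ — equivalently, since $|I|$ is recoverable from $n$ and the number of $1$'s, it is determined by and determines $\lambda$ together with $n$. Because $n$ is fixed throughout, we get that $\alpha_{I^c} \sim \alpha_{J^c}$ (i.e.\ they have the same sorted part multiset) if and only if $I$ and $J$ have the same run decomposition $\lambda$. I expect the main (minor) obstacle to be bookkeeping the ``$1$'' parts correctly — one must be careful that $n$ is held fixed so that the number of $1$'s, though not part of the run decomposition itself, does not cause an ambiguity; this is why the equivalence is stated for $I, J \subseteq [n-1]$ with the same $n$. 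The map is otherwise a direct translation via the boundary elements $0$ and $n$.
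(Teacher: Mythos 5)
Your proposal is correct and follows essentially the same route as the paper: you translate each maximal run of length $t$ in $I$ into a part of size $t+1$ of $\alpha_{I^c}$, note the remaining parts are $1$'s, and conclude that the sorted parts of $\alpha_{I^c}$ are exactly $(\lambda_1+1,\dots,\lambda_\ell+1,1^{n-|I|-\ell})$, which determines and is determined by the run decomposition since $n$ is fixed. Your version just spells out the boundary bookkeeping (the elements $0$ and $n$ and the count of $1$'s) a bit more explicitly than the paper does.
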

\begin{proof}[Proof of Proposition \ref{prop:symm2harset}]
    Let $A(S) = (S,(A_1, \dots, A_m))$. By \Cref{cor:crset}, $S$ is symmetric if and only if $S^c$ is symmetric. By Proposition \ref{prop:symmcond}, this is equivalent to $|S^c(\alpha)| = |S^c(\beta)|$ for all $\alpha\sim \beta$. 
    But note that 
    \begin{equation}
        w\in A(S)_I = \bigcap\limits_{i\in I}A_i
    \end{equation}
    is equivalent to $w(i) > w(i+1)$ for all $i\in I$, which is equivalent to $w^c(i) < w^c(i+1)$ for all $i\in I$, which is equivalent to $\Des(w^c)\subseteq I^c$, which is equivalent to $w^c\in S^c(\alpha_{I^c})$. So we have 
    \begin{equation}
        \abs*{A(S)_I} = \abs*{S^c(\alpha)}
    \end{equation}
    Thus, by Proposition \ref{prop:rund2sim}, $|S^c(\alpha)| = |S^c(\beta)|$ for all $\alpha\sim \beta$ if and only if 
    $\abs*{A(S)_I} = \abs*{A(S)_J}$
    for all $I, J\subseteq [n-1]$ with the same run decomposition.
\end{proof}

\begin{defn}\label{def:equivpairset}
    Two pairs of sets $(I_1, I_2)$ and $(J_1, J_2)$ with $I_1, I_2, J_1, J_2\subseteq [m]$ and $I_1\cap I_2 = J_1\cap J_2 = \varnothing$ are said to be equivalent if there exists a bijection $\sigma\colon [m]\to [m]$ such that 
    \begin{itemize}
        \item $\sigma(I_1) = J_1, \sigma(I_2) = J_2$.
        \item For $i$ and $j$ such that $i, j\in I_1\cup I_2$ we have $|\sigma(i)-\sigma(j)| = 1$ if and only if $|i-j| = 1$. 
    \end{itemize}
    We write $(I_1, I_2)\sim (J_1, J_2)$ to denote $ (I_1, I_2)$ and $(J_1, J_2)$ are equivalent.
\end{defn}

Informally speaking, two pairs of sets are equivalent if one can be obtained from the other by doing the following. We consider the consecutive runs of numbers in $I_1\cup J_1$, and move them around in blocks without splitting a block apart or combining blocks together. We are also allowed to flip a block backward. 

\begin{ex}
    $\textup(\{1,7,9\}, \{2,3,6\}\textup)$ is equivalent to $\textup(\{3, 5, 9\}, \{2,7,8\}\textup)$. Informally, we moved the $123$ block \textup(in the union of $\{1,7,9\}$ and $\{2,3,6\}$\textup) to the right so that it become $789$ and flipped it, we moved the $67$ block to the left so that it becomes $23$, and we moved the $9$ block to the left so that it becomes $5$. Formally, one can take the bijection $9,8,7,1,4,2,3,6,5,10$, which satisfies the conditions in Definition \ref{def:equivpairset}.
\end{ex}
\begin{rmk}\label{rmk:decompsim}
    $(I, \varnothing)\sim (J, \varnothing)$ if and only if $I$ and $J$ have the same run decomposition.
\end{rmk}

\begin{lm}\label{lm:doublehar}
    A set system $H = (U, (A_1, \dots, A_m))$ is harmonic if and only if for all $(I_1, I_2)\sim (J_1, J_2)$, we have $\abs*{H_{I_1, I_2}} = \abs*{H_{J_1, J_2}}$.
    \begin{proof}
        By Remark \ref{rmk:decompsim}, it suffices to show the ``only if'' direction. Fix $(I_1, I_2)\sim (J_1, J_2)$ and let $\sigma\colon [m]\to [m]$ be the bijection that satisfies the conditions in Definition \ref{def:equivpairset}. Then by the inclusion-exclusion principle we have 
        \begin{equation}
            \abs*{H_{I_1, I_2}} = \abs*{H_{I_1}\cap \bigcap_{i_2\in I_2}\ove{A_{i_2}}} = \sum_{T\sbs I_2}(-1)^{|T|}\abs*{H_{I_1}\cap \bigcap_{i_2\in T}A_{i_2}} = \sum_{T\sbs I_2}(-1)^{|T|}\abs*{H_{I_1\cup T}}
        \end{equation}
        Note that $(I_1\cup T, \varnothing)\sim (\sigma(I_1\cup   T), \sigma(\varnothing)) = (J_1\cup \sigma(T), \varnothing)$, so $I_1\cup T$ and $J_1\cup \sigma(T)$ has the same run decomposition. Thus, we have
        \begin{equation}
            \sum_{T\sbs I_2}(-1)^{|T|}\abs*{H_{I_1\cup T}} = \sum_{T\sbs I_2}(-1)^{|T|}\abs*{H_{J_1\cup \sigma(T)}} = \sum_{T\sbs J_2}(-1)^{|T|}\abs*{H_{J_1\cup T}},
        \end{equation}
        where the last equality is due to $\sigma(I_2) =J_2$. 
        But note that by similar reasoning as with $(I_1, I_2)$, we have 
        \begin{equation}
            \abs*{H_{J_1, J_2}} = \sum_{T\sbs J_2}(-1)^{|T|}\abs*{H_{J_1\cup T}}
        \end{equation}
        and thus $|H_{I_1, I_2}| = |H_{J_1, J_2}|$ as desired.
    \end{proof}
\end{lm}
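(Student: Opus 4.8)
The plan is to reduce immediately to the ``only if'' direction. If the stated equality $\abs*{H_{I_1,I_2}} = \abs*{H_{J_1,J_2}}$ holds for all equivalent pairs, then specializing to $I_2 = J_2 = \varnothing$ and invoking Remark \ref{rmk:decompsim} recovers exactly the defining condition of a harmonic set system, so that implication is free. For the converse, assume $H$ is harmonic and fix $(I_1,I_2)\sim(J_1,J_2)$ with a witnessing bijection $\sigma\colon[m]\to[m]$, so that $\sigma(I_1)=J_1$, $\sigma(I_2)=J_2$, and $\sigma$ preserves adjacency among elements of $I_1\cup I_2$.

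The main idea is to use the inclusion--exclusion principle to convert the ``two-sided'' quantity $\abs*{H_{I_1,I_2}}$ --- an intersection of some $A_i$'s with some complements $\ove{A_j}$'s --- into a signed sum of ``one-sided'' quantities $\abs*{H_K}$, which harmonicity controls directly. Writing $H_{I_1,I_2} = H_{I_1}\cap\bigcap_{i\in I_2}\ove{A_i}$ and expanding the complements yields
\[
    \abs*{H_{I_1,I_2}} = \sum_{T\sbs I_2}(-1)^{\abs*{T}}\,\abs*{H_{I_1\cup T}},
\]
and the same expansion holds for $(J_1,J_2)$. Since $\sigma$ restricts to a cardinality-preserving bijection from subsets of $I_2$ to subsets of $J_2$, it suffices to match the summands, i.e.\ to show $\abs*{H_{I_1\cup T}} = \abs*{H_{J_1\cup\sigma(T)}}$ for each $T\sbs I_2$. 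For this I would check that $(I_1\cup T,\varnothing)\sim(J_1\cup\sigma(T),\varnothing)$: one has $\sigma(I_1\cup T) = J_1\cup\sigma(T)$, and every pair of elements of $I_1\cup T$ already lies in $I_1\cup I_2$, so the adjacency-preservation hypothesis on $\sigma$ applies to it verbatim. Then Remark \ref{rmk:decompsim} gives that $I_1\cup T$ and $J_1\cup\sigma(T)$ have the same run decomposition, so harmonicity gives the equality of sizes. Reassembling the two signed sums yields $\abs*{H_{I_1,I_2}} = \abs*{H_{J_1,J_2}}$.

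The only real subtlety, and the step I expect to need the most care, is the verification that $(I_1\cup T,\varnothing)\sim(J_1\cup\sigma(T),\varnothing)$, because the equivalence of $(I_1,I_2)$ and $(J_1,J_2)$ only guarantees that $\sigma$ respects adjacency on $I_1\cup I_2$, not on all of $[m]$. The argument works precisely because enlarging $I_1$ by a subset $T$ of $I_2$ keeps the index set inside $I_1\cup I_2$; this would fail for an arbitrary subset of $[m]$, which is the reason inclusion--exclusion is applied to the complemented indices $I_2$ rather than in any other order. Everything else is routine bookkeeping with the two inclusion--exclusion expansions.
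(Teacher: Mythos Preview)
Your proposal is correct and follows essentially the same argument as the paper: reduce to the ``only if'' direction via Remark~\ref{rmk:decompsim}, expand $\abs*{H_{I_1,I_2}}$ by inclusion--exclusion over $T\sbs I_2$, and match terms by observing that the same $\sigma$ witnesses $(I_1\cup T,\varnothing)\sim(J_1\cup\sigma(T),\varnothing)$ since $I_1\cup T\sbs I_1\cup I_2$. Your explicit remark on why the adjacency hypothesis still applies to $I_1\cup T$ is exactly the point the paper uses implicitly.
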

\begin{cor}\label{cor:comharset}
    A set system $H$ is harmonic if and only if $\ove{H}$ is.
    \begin{proof}
        Suppose $H = (U, (A_1, \dots, A_m))$. Then $\ove{H} = (U, (B_1, \dots, B_m))$ where $B_i = \ove{A_i}$. Then we have \begin{equation}
            \ove{H}_{I_1, I_2}= \bigcap\limits_{i_1\in I_1}B_{i_1} \cap \bigcap\limits_{i_2\in I_2}\ove{B_{i_2}} = \bigcap\limits_{i_1\in I_1}B_{i_1} \cap \bigcap\limits_{i_2\in I_2}A_{i_2}= \bigcap\limits_{i_2\in I_2}A_{i_2}\cap \bigcap\limits_{i_1\in I_1}\ove{A_{i_1}} = H_{I_2, I_1}. 
        \end{equation}
        But note that $(I_1, I_2)\sim (J_1, J_2)$ if and only if $(I_2, I_1)\sim (J_2, J_1)$, since in Definition \ref{def:equivpairset}, the role of the indices $1$ and $2$ are symmetric.
    \end{proof}
\end{cor}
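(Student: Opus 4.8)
The plan is to pass through the two-index characterization of harmonicity established in Lemma~\ref{lm:doublehar}, under which complementation of a set system becomes transparent. Write $H = (U, (A_1, \dots, A_m))$, so that $\ove{H} = (U, (\ove{A_1}, \dots, \ove{A_m}))$. The first step is the direct computation, valid for any disjoint $I_1, I_2 \sbs [m]$,
\[
    \ove{H}_{I_1, I_2} \;=\; \bigcap_{i_1 \in I_1} \ove{A_{i_1}} \cap \bigcap_{i_2 \in I_2} \ove{\ove{A_{i_2}}} \;=\; \bigcap_{i_1 \in I_1} \ove{A_{i_1}} \cap \bigcap_{i_2 \in I_2} A_{i_2} \;=\; H_{I_2, I_1},
\]
using that $A \mapsto \ove{A} = U \sm A$ is an involution. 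So the complement operation on set systems just swaps the two index arguments of the operator $(I_1, I_2) \mapsto H_{I_1, I_2}$.

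The second step is to observe that the equivalence $\sim$ on pairs of index sets from Definition~\ref{def:equivpairset} is itself symmetric under swapping the two components: $(I_1, I_2) \sim (J_1, J_2)$ if and only if $(I_2, I_1) \sim (J_2, J_1)$. Indeed, a bijection $\sigma$ witnessing one equivalence witnesses the other, since the requirement $\sigma(I_1) = J_1$, $\sigma(I_2) = J_2$ is symmetric in the subscripts $1$ and $2$, and the adjacency-preservation clause refers only to the union $I_1 \cup I_2 = I_2 \cup I_1$.

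Combining the two steps with Lemma~\ref{lm:doublehar}: $\ove{H}$ is harmonic iff $\abs*{\ove{H}_{I_1, I_2}} = \abs*{\ove{H}_{J_1, J_2}}$ whenever $(I_1, I_2) \sim (J_1, J_2)$, which by Step~1 reads $\abs*{H_{I_2, I_1}} = \abs*{H_{J_2, J_1}}$; relabeling the four index sets using Step~2, this says $\abs*{H_{I_1, I_2}} = \abs*{H_{J_1, J_2}}$ whenever $(I_1, I_2) \sim (J_1, J_2)$, which is precisely harmonicity of $H$ by Lemma~\ref{lm:doublehar} again. I do not expect any real obstacle here: essentially all of the work is already packaged in Lemma~\ref{lm:doublehar}, which upgrades the single-index Definition~\ref{def:harmonicset} — for which the effect of complementation is far from obvious — to a form manifestly symmetric under complementation. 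The only points requiring a little care are the double-complement cancellation $\ove{\ove{A_{i_2}}} = A_{i_2}$ and checking that Definition~\ref{def:equivpairset} really does treat its two arguments on the same footing.
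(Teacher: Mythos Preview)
Your proof is correct and follows essentially the same approach as the paper: both compute $\ove{H}_{I_1,I_2} = H_{I_2,I_1}$, observe that $\sim$ is symmetric in its two arguments, and conclude via Lemma~\ref{lm:doublehar}. Your write-up is in fact slightly more explicit about the role of Lemma~\ref{lm:doublehar}, which the paper leaves implicit.
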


\section{Possible sizes of symmetric sets}\label{sec:sizeofsymm}
In this section, we prove Theorem \ref{thm:symsetlargsize} and Theorem \ref{thm:symsetsmallsize} which classify the possible sizes of a symmetric set without monotone elements.
\begin{rmk}\label{rmk:delmonotone} Any subset of $\{\iota_n, \delta_n\}$ is symmetric, so by Proposition \ref{prop:permcupsm}, $S\sbs S_n$ is symmetric if and only if $S\sm \{\iota_n, \delta_n\}$ is symmetric. Therefore, it suffices to ask for the possible sizes of a symmetric set without monotone elements, rather than asking for the possible sizes of a symmetric set.
\end{rmk}
\begin{rmk}\label{rmk:flipsize}
    If $S\sbs S_n$ is a symmetric set without monotone elements, then by Proposition \ref{prop:permcupsm}, $S_n\sm (S\cup \{\iota_n, \delta_n\})$ is a symmetric set without monotone elements. Therefore, to completely classify all the possible sizes of symmetric sets without monotone elements, it suffices to classify all the possible sizes up to $\frac{n!-2}2.$
\end{rmk}
We start by discussing \textit{Knuth-closed sets}, which are special examples of Schur-positive (and thus symmetric) sets.
\begin{defn}
    For each SYT $P$ of size $n$, the Knuth class corresponding to $P$, denoted $K(P)$, is the set of permutation $w\in S_n$ such that $w\mapsto (P, Q)$ under the Robinson--Schensted correspondence for some SYT $Q$.
\end{defn}
\begin{thm}[{\cite[Theorem 5.5]{gesselreutenauer}}]\label{thm:kcsp}
    Knuth classes are Schur-positive \textup(and thus symmetric\textup).
    \begin{proof}
        Consider the Knuth class of a SYT $P$ of size $n$ and let $\lambda = \sh(P)$. Given a permutation $w\in S_n$, write $(P(w), Q(w))$ for its image under the Robinson--Schensted correspondence. Recall from Proposition \ref{prop:rskprop} that $\Des(w) =\Des(Q(w))$. Then 
        \begin{equation}
            \sum_{w\in K(P)}F_{\Des(w)} = \sum_{w\in K(P)}F_{\Des(Q(w))} = \sum_{Q\in \SYT(\lambda)}F_{\Des(Q)} = s_{\lambda},
        \end{equation}
        where the last equality is by Theorem \ref{thm:slambda}.
    \end{proof}
\end{thm}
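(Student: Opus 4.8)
The plan is to compute the quasisymmetric generating function of a Knuth class directly and recognize it as a single Schur function; Schur-positivity is then immediate, and symmetry follows since every Schur-positive set is symmetric by definition.

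First I would unwind the definitions. Fix a standard Young tableau $P$ of size $n$, and set $\lambda = \sh(P)$. By definition, $K(P)$ is the set of $w \in S_n$ whose Robinson--Schensted insertion tableau equals $P$. Since the Robinson--Schensted correspondence is a bijection between $S_n$ and pairs $(P', Q')$ of standard Young tableaux of the same shape and size $n$, restricting to the pairs whose first coordinate is $P$ shows that the map $w \mapsto Q(w)$, sending a permutation to its recording tableau, is a bijection from $K(P)$ onto $\SYT(\lambda)$.

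Next, by the second part of Proposition \ref{prop:rskprop} we have $\Des(w) = \Des(Q(w))$ for every permutation $w$, so
\[
Q(K(P)) = \sum_{w \in K(P)} F_{\Des(w)} = \sum_{w \in K(P)} F_{\Des(Q(w))} = \sum_{Q \in \SYT(\lambda)} F_{\Des(Q)},
\]
where the last equality uses the bijection $w \mapsto Q(w)$ above. By Theorem \ref{thm:slambda}, the right-hand side is exactly $s_\lambda$. Hence $Q(K(P))$ is a Schur function, so $K(P)$ is Schur-positive, and therefore symmetric.

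I do not expect a genuine obstacle here, since the two nontrivial inputs (the descent-preservation property of the recording tableau, Proposition \ref{prop:rskprop}, and Gessel's expansion of $s_\lambda$ in the fundamental basis, Theorem \ref{thm:slambda}) are already in hand. The only point requiring a moment's attention is the claim that $Q(w)$ ranges over all of $\SYT(\lambda)$, each value attained exactly once, as $w$ ranges over $K(P)$; this is simply the bijectivity of the Robinson--Schensted correspondence with the first coordinate held fixed.
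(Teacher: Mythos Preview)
Your proposal is correct and follows essentially the same argument as the paper: both compute $Q(K(P))$ by replacing $\Des(w)$ with $\Des(Q(w))$ via Proposition~\ref{prop:rskprop}, use the bijection $w\mapsto Q(w)$ between $K(P)$ and $\SYT(\lambda)$, and then invoke Theorem~\ref{thm:slambda} to identify the result as $s_\lambda$. Your exposition is slightly more explicit about the bijectivity step, but the proof is the same.
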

We say a set is \textit{Knuth-closed} if it is a union of Knuth classes. By Proposition \ref{prop:permcupsm} and Theorem \ref{thm:kcsp}, Knuth-closed sets are Schur-positive (and thus symmetric.) Given $\lambda\vdash n, $ let $f^\lambda$ denotes the number of SYT of shape $\lambda$. 
\begin{defn}
    Given $A, B\sbs \Z$, define $A+B := \{a+b\mid a\in A, b\in B\}.$ We write $nA$ for the sum of $n$ copies of $A$. We use the notation $[a,b)$ to denotes $[a, a+1, \dots, b-1).$
\end{defn}
The following proposition gives a good control on the possible sizes of Knuth-closed sets.
\begin{prop}
    There exists a Knuth-closed set of size $p$ without monotone elements if and only if 
    \begin{equation}
        p \in \sum_{\lambda \vdash n, \lambda \ne (1^n), \lambda \ne (n)}f^\lambda\{0, f^{\lambda}\}\label{eq:knuthcsize}
    \end{equation}
    \begin{proof}
        For each $\lambda\vdash n$ there are $f^\lambda$ possible $P$ with $\sh(P) = \lambda$, each of which corresponds to a Knuth class of size $f^\lambda$ since there are $f^\lambda$ choices of $Q.$ The two partitions $(1^n)$ and $(n)$ each correspond to a Knuth class containing only one element, which is $\iota_n$ or $\delta_n$.
        Thus, \eqref{eq:knuthcsize} holds if and only if $p$ can be written as the sum of sizes of distinct Knuth classes without monotone elements.
    \end{proof}
    \label{prop:knuthcsize}
\end{prop}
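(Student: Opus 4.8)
The plan is to establish the equivalence by a direct count, exploiting that the Knuth classes partition $S_n$. Recall that the Robinson--Schensted correspondence is a bijection $w \mapsto (P(w), Q(w))$ between $S_n$ and pairs of SYT of a common shape $\lambda \vdash n$, and that $K(P) = \{w \in S_n : P(w) = P\}$. Hence for a fixed SYT $P$ of shape $\lambda$ we have $|K(P)| = f^\lambda$, since $Q(w)$ then ranges over all $f^\lambda$ SYT of shape $\lambda$. Consequently, for each $\lambda \vdash n$ there are exactly $f^\lambda$ Knuth classes of shape $\lambda$ (one per choice of $P$), they are pairwise disjoint because $P(w)$ is determined by $w$, each has size $f^\lambda$, and the collection of all Knuth classes over all $\lambda \vdash n$ partitions $S_n$.

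Next I would identify the monotone elements among the Knuth classes. Inserting $1, 2, \dots, n$ in increasing order appends each entry to the first row, so $P(\iota_n)$ is the one-row SYT of shape $(n)$; dually $P(\delta_n)$ is the one-column SYT of shape $(1^n)$. Since $f^{(n)} = f^{(1^n)} = 1$, each of the shapes $(n)$ and $(1^n)$ contributes a single Knuth class, namely $\{\iota_n\}$ and $\{\delta_n\}$. Because Knuth classes are disjoint, a Knuth-closed set $S$ --- a disjoint union of Knuth classes --- contains $\iota_n$ (resp.\ $\delta_n$) if and only if the Knuth class of shape $(n)$ (resp.\ $(1^n)$) is one of its constituents. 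Thus the Knuth-closed sets without monotone elements are exactly the disjoint unions of Knuth classes whose shapes avoid $(n)$ and $(1^n)$.

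It remains to record which sizes these sets realize. Given such an $S$, for each $\lambda \vdash n$ with $\lambda \ne (n), (1^n)$ let $c_\lambda$ be the number of shape-$\lambda$ Knuth classes contained in $S$; then $0 \le c_\lambda \le f^\lambda$ and $|S| = \sum_\lambda c_\lambda f^\lambda$, where the $\lambda$-summand $c_\lambda f^\lambda$ runs through $f^\lambda\{0, 1, \dots, f^\lambda\}$ as $c_\lambda$ runs through $\{0, 1, \dots, f^\lambda\}$. Conversely, every tuple of integers $(c_\lambda)$ with $0 \le c_\lambda \le f^\lambda$ is realized: for each $\lambda \ne (n), (1^n)$ pick any $c_\lambda$ of its $f^\lambda$ Knuth classes, and take the union --- this set is Knuth-closed and contains no monotone element. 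Hence the attainable sizes form exactly $\sum_{\lambda \vdash n,\, \lambda \ne (1^n),\, \lambda \ne (n)} f^\lambda\{0, 1, \dots, f^\lambda\}$, which is the set in \eqref{eq:knuthcsize}.

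I do not expect a genuine obstacle: once the Robinson--Schensted correspondence is invoked the argument is pure bookkeeping. The two points that deserve a line of care are the identification of $\{\iota_n\}$ and $\{\delta_n\}$ with the Knuth classes of the single-row and single-column shapes, and the remark that within a fixed shape the $c_\lambda$ may be chosen independently of the choices for other shapes --- both immediate from the disjointness of Knuth classes. Symmetry and Schur-positivity of the resulting sets play no role here; they are supplied by Theorem \ref{thm:kcsp} and Proposition \ref{prop:permcupsm}.
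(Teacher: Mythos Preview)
Your argument is correct and follows essentially the same route as the paper: count Knuth classes by shape via Robinson--Schensted, identify $\{\iota_n\}$ and $\{\delta_n\}$ as the unique classes of shapes $(n)$ and $(1^n)$, and observe that a Knuth-closed set without monotone elements is precisely a union of $c_\lambda$ classes of each remaining shape $\lambda$ with $0\le c_\lambda\le f^\lambda$. One small notational caution: in this paper $nA$ denotes the $n$-fold Minkowski sum, so $f^\lambda\{0,f^\lambda\}=\{0,f^\lambda,2f^\lambda,\dots,(f^\lambda)^2\}$, which is exactly the set you describe; your expression $f^\lambda\{0,1,\dots,f^\lambda\}$ is meant as a scalar multiple and agrees with this, but under the paper's convention that same string would denote a different (larger) set, so it is safer to write it as $\{c f^\lambda : 0\le c\le f^\lambda\}$.
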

Our next goal is to show that the sumset on the right-hand side of \eqref{eq:knuthcsize} contains a large interval (Lemma \ref{lm:bigsymsize}).
\begin{thm}[Hook length formula]
    For $\lambda\vdash n$ we have 
    \begin{equation}
        f^\lambda = \frac{n!}{\prod\limits_{(i, j)\in \lambda}h(i, j)}
    \end{equation}
    where $h(i, j) = (\lambda_i-i) + (\lambda^t_j - j) +1$ is the hook length of the cell $(i, j).$\label{thm:hooklength}
\end{thm}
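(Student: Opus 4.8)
The hook length formula is classical; the plan is to carry out the probabilistic argument of Greene, Nijenhuis, and Wilf. Write $F(\lambda):=\frac{n!}{\prod_{(i,j)\in\lambda}h(i,j)}$ for the claimed value. I will prove $f^\lambda=F(\lambda)$ for all $\lambda$ by induction on $n=\abs*{\lambda}$, the base case $n=1$ being immediate since $f^{(1)}=1=F((1))$.

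For the inductive step I first reduce everything to an identity about hook lengths alone. In any SYT of shape $\lambda$ the entry $n$ occupies a \emph{removable} cell — a cell $c\in\lambda$ such that $\lambda\sm\{c\}$ is again a Young diagram — and deleting that cell produces an SYT of shape $\lambda\sm\{c\}$; this bijection yields the recurrence $f^\lambda=\sum_c f^{\lambda\sm\{c\}}$, the sum over removable cells $c$. By induction it suffices to check that $F$ obeys the same recurrence, i.e. $\sum_c \frac{F(\lambda\sm\{c\})}{F(\lambda)}=1$. Deleting a removable cell $c=(p,q)$ changes only the hooks of the cells strictly above it in column $q$ and strictly to the left in row $p$, each decreasing by exactly $1$, so
\[
\frac{F(\lambda\sm\{c\})}{F(\lambda)}=\frac1n\prod_{i<p}\frac{h(i,q)}{h(i,q)-1}\;\prod_{j<q}\frac{h(p,j)}{h(p,j)-1},
\]
and the whole problem becomes the purely numerical identity that these quantities, summed over all removable $c=(p,q)$, add to $1$.

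To prove that identity, introduce the \emph{hook walk}: pick a cell $a_0$ of $\lambda$ uniformly at random; given $a_t$, stop if $a_t$ is removable, and otherwise let $a_{t+1}$ be uniform among the $h(a_t)-1$ cells of the hook of $a_t$ other than $a_t$ itself. This always terminates at a removable cell, and the hitting probabilities of the removable cells sum to $1$ by construction, so it suffices to show that the walk ends at $c=(p,q)$ with probability exactly $\frac1n\prod_{i<p}\frac{h(i,q)}{h(i,q)-1}\prod_{j<q}\frac{h(p,j)}{h(p,j)-1}$. One sees this by expanding the product over subsets $I\sbs\{1,\dots,p-1\}$ and $J\sbs\{1,\dots,q-1\}$ and matching the term indexed by $(I,J)$ with the probability that the set of rows the walk visits is exactly $I\cup\{p\}$ and the set of columns it visits is exactly $J\cup\{q\}$; this last statement is established by conditioning on the start $a_0=\big(\min(I\cup\{p\}),\min(J\cup\{q\})\big)$ and inducting on $|I|+|J|$, using the factorization of the probability of a prescribed list of visited rows and columns as a product of terms $\frac{1}{h(i,q)-1}$ for $i\in I$ and $\frac{1}{h(p,j)-1}$ for $j\in J$.

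The main obstacle is precisely this last step — the telescoping expansion of the hook-length product together with the conditioning/induction that identifies each term with a hitting probability of the hook walk. Everything else (the recurrence for $f^\lambda$, its reduction to the corresponding statement for $F$, and the hook-length bookkeeping when a single removable corner is deleted) is routine.
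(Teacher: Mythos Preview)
The paper does not prove this theorem at all: it is stated as the classical hook length formula and used without argument (the next item in the paper is an unrelated lemma). There is therefore no ``paper's proof'' to compare against.

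Your proposal is the Greene--Nijenhuis--Wilf hook-walk argument, which is correct and is one of the standard proofs of this result. The reduction to the recurrence $f^\lambda=\sum_c f^{\lambda\sm\{c\}}$, the hook-length bookkeeping giving $\frac{F(\lambda\sm\{c\})}{F(\lambda)}=\frac1n\prod_{i<p}\frac{h(i,q)}{h(i,q)-1}\prod_{j<q}\frac{h(p,j)}{h(p,j)-1}$, and the identification of this product with the hitting probability of the hook walk via the expansion over $(I,J)$ are all accurate. The only place where you are sketchy is the final inductive/conditioning step matching each $(I,J)$-term to the probability that the walk visits exactly the rows $I\cup\{p\}$ and columns $J\cup\{q\}$; but the intended argument is the standard one and goes through as you indicate.
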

\begin{lm}
    Let $\lambda\vdash n$ and let $\mu$ be a partition obtained by moving a box of $\lambda$ to the first row. Then $f^{\mu}\ge \frac{f^\lambda}{n-1}.$
    \begin{proof}
        Note that $f^\mu\ge f^{\lambda\cap \mu}$ since given an SYT of shape $\lambda\cap \mu$ we can fill $\mu \sm \lambda\cap \mu$ with $n$ to obtain an SYT of $\mu.$ Note that $(n-1)f^{\lambda\cap \mu}\ge f^{\lambda}$ since $\lambda\sm \lambda\cap \mu$ must be filled with one of the $n-1$ values $2,\dots, n$ and the ordering of the remaining numbers is determined by an SYT of shape $\lambda\cap \mu.$ Thus $f^{\mu}\ge \frac{f^\lambda}{n-1}.$
    \end{proof}
    \label{lm:movetofirst}
\end{lm}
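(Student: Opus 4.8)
The plan is to factor the comparison through the common sub-diagram $\nu := \lambda \cap \mu$, which is obtained from $\lambda$ by deleting the box that gets moved and from $\mu$ by deleting the box that gets added to the first row, so that $|\nu| = n-1$. (If the moved box already lay in the first row then $\mu = \lambda$ and there is nothing to prove, so I may assume the box is removed from some row $r\ge 2$; since $\mu$ is again a partition, that box must be a removable corner of $\lambda$, which is precisely what makes $\nu$ a partition.) It then suffices to prove the two inequalities $f^\mu \ge f^\nu$ and $f^\lambda \le (n-1)f^\nu$, since chaining them gives $f^\mu \ge f^\nu \ge f^\lambda/(n-1)$.

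For the first inequality I would build an injection $\SYT(\nu)\hookrightarrow\SYT(\mu)$ by placing the label $n$ in the unique cell of $\mu\setminus\nu$. That cell sits at the right end of the first row of $\mu$ with no cell below it, and $n$ is the largest label, so the row- and column-monotonicity conditions are preserved; the map is visibly injective, giving $f^\mu\ge f^\nu$.

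For the second inequality, write $b$ for the unique cell of $\lambda\setminus\nu$ and send an SYT $T$ of shape $\lambda$ to the pair $(T(b),\ \std(T|_\nu))$, where $T(b)$ is the label in $b$ and $\std(T|_\nu)$ is the standardization of the restriction of $T$ to the cells of $\nu$ (a genuine SYT of shape $\nu$, since deleting the corner cell $b$ keeps all monotonicity conditions intact). The cell $b$ is not the top-left cell, so $2\le T(b)\le n$, and the first coordinate takes at most $n-1$ values. This map is injective: from $v := T(b)$ and the tableau $\std(T|_\nu)$ one reconstructs $T|_\nu$ via the unique order-preserving relabeling whose image is $\{1,\dots,n\}\setminus\{v\}$ instead of $\{1,\dots,n-1\}$, and then $T$ is recovered by reinserting $v$ into $b$. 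Hence $f^\lambda \le (n-1)f^\nu$, and combining the two inequalities finishes the proof.

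There is no real obstacle here; the one point that should be checked carefully — though it is routine — is the injectivity of the last map, i.e.\ that $T$ is genuinely determined by $(T(b),\std(T|_\nu))$. This amounts to the observation that standardization restricts to a bijection between fillings of $\nu$ with entry set $\{1,\dots,n\}\setminus\{v\}$ satisfying the SYT inequalities and honest standard Young tableaux of $\nu$, so no information is lost. (One could instead derive $f^\lambda\le (n-1)f^\nu$ directly from the hook length formula of \Cref{thm:hooklength}, but the bijective argument is cleaner and avoids computing hook lengths.)
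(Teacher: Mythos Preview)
Your proof is correct and follows essentially the same approach as the paper's: both factor through $\nu = \lambda\cap\mu$ and establish the two inequalities $f^\mu \ge f^\nu$ and $f^\lambda \le (n-1)f^\nu$ via the same maps (placing $n$ in the new first-row cell, and recording the entry in the removed corner together with the standardized restriction). Your version is simply more explicit about the edge case $\mu=\lambda$, the fact that the moved box must be a removable corner, and the injectivity of the second map.
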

\begin{lm}
    Let $\lambda \vdash n$ such that $f^\lambda > 1.$ Then there exists $\mu\vdash n$ such that $f^\lambda > f^\mu \ge \frac{f^{\lambda}}{n-1}.$
    \begin{proof}
        We repeatedly move one box of $\lambda$ to the first row until we obtain a $\mu$ such that $f^{\mu}< f^\lambda$. This process halts, since if we move all boxes to the first row then $f^{\mu} = 1 < f^{\lambda}.$ Let $\mu'$ be the partition one step before halting. Then we have $f^{\mu'}\ge f^{\lambda}$, and thus $f^{\lambda} > f^\mu \ge \frac{f^{\mu'}}{n-1}\ge \frac{f^{\lambda}}{n-1},$ by Lemma \ref{lm:movetofirst}.
    \end{proof}
    \label{lm:divnm1}
\end{lm}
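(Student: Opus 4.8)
The plan is to iterate Lemma~\ref{lm:movetofirst}, whose single-step estimate $f^{\mu}\ge f^{\lambda}/(n-1)$ is the only substantive input; everything else is bookkeeping. The guiding idea is that $f^{(n)} = 1$, so the one-row shape minimizes $f^{\lambda}$ over partitions of $n$, and that any partition can be turned into $(n)$ by repeatedly pushing a corner box up to the first row. Somewhere along the way $f$ must first drop below $f^{\lambda}$, and the partition at that point is the desired $\mu$.

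Concretely, first I would build a sequence $\lambda = \lambda^{(0)}, \lambda^{(1)}, \dots, \lambda^{(N)} = (n)$ in which each $\lambda^{(t+1)}$ is obtained from $\lambda^{(t)}$ by moving a single box to the first row. As long as $\lambda^{(t)}\ne (n)$, the shape $\lambda^{(t)}$ has at least one box outside the first row, hence a removable corner there (for instance the rightmost box of its last nonempty row); moving that box to the top produces a genuine partition $\lambda^{(t+1)}$ whose first part is strictly larger. So each step is legal, the first part strictly increases, and the construction terminates at $(n)$ after finitely many steps. By Lemma~\ref{lm:movetofirst}, $f^{\lambda^{(t+1)}}\ge f^{\lambda^{(t)}}/(n-1)$ for every $t$.

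Then I would run an ``intermediate value'' argument along this chain. Since $f^{\lambda^{(0)}} = f^{\lambda} > 1 = f^{\lambda^{(N)}}$, there is a least index $t\ge 1$ with $f^{\lambda^{(t)}} < f^{\lambda}$, and by minimality $f^{\lambda^{(t-1)}}\ge f^{\lambda}$. Applying Lemma~\ref{lm:movetofirst} to the step $\lambda^{(t-1)}\to\lambda^{(t)}$ gives
\[
f^{\lambda^{(t)}} \ge \frac{f^{\lambda^{(t-1)}}}{n-1} \ge \frac{f^{\lambda}}{n-1},
\]
so $\mu := \lambda^{(t)}$ satisfies $f^{\lambda} > f^{\mu} \ge f^{\lambda}/(n-1)$, as required. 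I do not expect a genuine obstacle here: the only points needing care are that each box-move stays within the class of partitions (handled by always choosing a \emph{removable} corner, which is also exactly what is needed for Lemma~\ref{lm:movetofirst} to apply) and that the procedure halts (it does, since the first part grows at each step). The real content has already been isolated in Lemma~\ref{lm:movetofirst}.
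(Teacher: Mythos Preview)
Your proposal is correct and is essentially the same argument as the paper's: iterate the box-to-first-row move, stop at the first step where $f$ drops below $f^{\lambda}$, and apply Lemma~\ref{lm:movetofirst} at that step. You have simply been a bit more explicit about why each move is legal and why the process terminates.
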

\begin{lm}\label{lm:squarelarger}
    Let $\lambda^{(1)}, \dots, \lambda^{(t)}$ denote the partitions of $n$ other than $(1^n), (n), (n-1,1), (n-2,1,1), (n-2,2)$, ordered such that $f^{\lambda^{(i)}}$ is nondecreasing. For all $i$ we have 
    \begin{equation}
        (n-1) +\sum_{0\le j < i}\pa{f^{\lambda^{(j)}}}^2 \ge f^{\lambda^{(i)}}.\label{eq:squarelarger}
    \end{equation}
    \begin{proof}
       The statement can be easily verified for $n\le 5$, so we assume $n\ge 6$. If $f^{\lambda^{(i)}}\ge (n-1)^2$ then by Lemma \ref{lm:divnm1} there is $\mu$ such that \begin{equation}f^{\lambda^{(i)}}>f^{\mu}\ge \frac{f^{\lambda^{(i)}}}{n-1}\ge n-1.\end{equation} This means $\mu \ne (1^n), (n),$ and if $\mu \in\{ (n-1, 1), (n-2, 1, 1), (n-2, 2)\}$ then we replace $\mu$ with it transpose $\mu^t$, so $\mu = \lambda^{(j)}$ for some $j < i.$ Then \begin{equation}(f^{\mu})^2 \ge \pa{\frac{f^{\lambda^{(i)}}}{n-1}}^2\ge f^{\lambda^{(i)}}\end{equation}
        so \eqref{eq:squarelarger} holds. If $n-1< f^{\lambda^{(i)}} < (n-1)^2$ then there is $j < i$ such that $\lambda^{(j)} = (n-1,1)^t$ and $(f^{\lambda^{(j)}})^2 = (n-1)^2\ge f^{\lambda^{(i)}}$ so \eqref{eq:squarelarger} holds. If $f^{\lambda^{(i)}} \le n-1$, \eqref{eq:squarelarger} clearly holds. Thus \eqref{eq:squarelarger} holds for all $i$.
    \end{proof}
\end{lm}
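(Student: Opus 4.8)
The plan is to bound each $f^{\lambda^{(i)}}$ by a \emph{single} summand on the right-hand side of \eqref{eq:squarelarger}, so that no delicate control of the full sum is needed. First I would dispose of small $n$: for $n\le 5$ the list $\lambda^{(1)},\dots,\lambda^{(t)}$ has at most two entries (it is empty for $n\le 4$, and for $n=5$ it is $(2,1,1,1),(2,2,1)$ with $f$-values $4,5$), so \eqref{eq:squarelarger} is a finite check. Hence assume $n\ge 6$ from now on.

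For the general step, write $m:=f^{\lambda^{(i)}}$ and split on the size of $m$ relative to $n-1$. If $m\le n-1$, the constant summand $n-1$ already gives $(n-1)\ge m$; this also settles $i=1$, where the sum is empty, since $n-1$ is the minimum value of $f$ over the list, attained by $(2,1^{n-2})$. If $n-1<m<(n-1)^2$, I would use the shape $(2,1^{n-2})=(n-1,1)^t$: for $n\ge 5$ it is none of the five excluded shapes, so it appears in the list, and since $f^{(2,1^{n-2})}=n-1<m$ it appears as some $\lambda^{(j)}$ with $j<i$; then $(f^{\lambda^{(j)}})^2=(n-1)^2\ge m$. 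Finally, if $m\ge (n-1)^2$, I would invoke Lemma \ref{lm:divnm1} to obtain $\mu\vdash n$ with $m>f^{\mu}\ge m/(n-1)\ge n-1>1$, so $\mu\notin\{(n),(1^n)\}$; if $\mu\in\{(n-1,1),(n-2,1,1),(n-2,2)\}$ I replace it by its transpose (same $f$-value, and for $n\ge 6$ the transpose is not among the five excluded shapes), so $\mu=\lambda^{(j)}$ for some $j$, and $f^{\mu}<m$ forces $j<i$; then $(f^{\lambda^{(j)}})^2\ge (m/(n-1))^2\ge m$, using $m\ge (n-1)^2$ in the last step.

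The two ingredients I would lean on are Lemma \ref{lm:divnm1} (which is precisely why the constant is $n-1$ and the exponent in the sum is $2$) and the transpose-invariance $f^\lambda=f^{\lambda^t}$, used to re-enter the list after discarding the three near-monotone shapes. I do not expect any genuine difficulty here; the ``hard part'' is really just the organization of the three-way case split, and the only points that require a little care are the bookkeeping around ties in the values $f^{\lambda^{(i)}}$ (the statement ``$f^{\mu}<f^{\lambda^{(i)}}$ implies $\mu$ occurs at an index $<i$ in any nondecreasing ordering'' is immediate) and the elementary check, via largest parts, that the three excluded non-monotone shapes and their transposes behave as claimed once $n\ge 6$.
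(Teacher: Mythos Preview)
Your proposal is correct and follows essentially the same approach as the paper: the same three-way case split on whether $f^{\lambda^{(i)}}$ is at most $n-1$, between $n-1$ and $(n-1)^2$, or at least $(n-1)^2$, using $(2,1^{n-2})$ in the middle case and Lemma~\ref{lm:divnm1} with the transpose trick in the large case. The only differences are cosmetic (you spell out the $n\le 5$ check and the tie-handling more explicitly).
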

\begin{prop}\label{prop:addseg}
    If $b-a\ge c,$ we have $[a,b)+d\{0,c\}= [a,b+dc)$ for $d\ge 0$.
    \begin{proof}
        We prove by induction on $d$. The base case $d = 0$ is clear. Assume the statement is true for $d$; we show it is true for $d+1$. We have 
        \begin{align}
            &[a,b)+(d+1)\{0,c\}= [a,b]+d\{0,c\} + \{0, c\}= [a,b+dc)+\{0,c\}\\
            &= [a,b+dc) \cup [a+c, b+(d+1)c) = [a,b+(d+1)c).\qedhere\notag
        \end{align}
    \end{proof}
\end{prop}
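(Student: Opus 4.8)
The plan is to prove Proposition~\ref{prop:addseg} by induction on $d$, the number of copies of the two-element set $\{0,c\}$ being added. The base case $d=0$ is immediate, since $d\{0,c\}=\{0\}$ and so both sides equal $[a,b)$. For the inductive step I would split off one copy, writing $(d+1)\{0,c\}=d\{0,c\}+\{0,c\}$, use the inductive hypothesis to rewrite $[a,b)+d\{0,c\}$ as the interval $[a,b+dc)$, and then observe that adding $\{0,c\}$ to an interval yields that interval together with its translate by $c$, i.e.\ $[a,b+dc)\cup[a+c,b+(d+1)c)$.

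The only point that needs checking is that these two intervals meet (overlap or are adjacent) rather than leaving a gap, so that their union is again a single interval, namely $[a,b+(d+1)c)$. This amounts to the inequality $a+c\le b+dc$, equivalently $c(1-d)\le b-a$, which holds automatically for $d\ge 1$ because its left-hand side is nonpositive, and which for $d=0$ is exactly the standing hypothesis $b-a\ge c$. In particular the hypothesis $b-a\ge c$ is used only to launch the induction at the step from $d=0$ to $d=1$; once the right endpoint has been pushed out by one copy of $\{0,c\}$ the interval is wide enough at every later step.

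I do not expect any genuine obstacle here: the statement is a short one-variable induction whose entire content is the gap-free union above. The role of the proposition is downstream — it is the tool for taking a short interval of attainable sizes of Knuth-closed sets and repeatedly absorbing the arithmetic progressions $f^\lambda\{0,f^\lambda\}$ contributed by the remaining partitions $\lambda$ (with the required width bounds supplied by Lemma~\ref{lm:squarelarger}), thereby building the large interval inside the right-hand side of~\eqref{eq:knuthcsize}.
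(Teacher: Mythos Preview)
Your proposal is correct and follows essentially the same approach as the paper: induction on $d$, splitting off one copy of $\{0,c\}$, applying the inductive hypothesis, and taking the union of the resulting interval with its translate by $c$. Your explicit verification of the overlap inequality $a+c\le b+dc$ (and in particular the observation that the hypothesis $b-a\ge c$ is only needed at the step $d=0\to 1$) is a nice clarification of the paper's final equality, which simply asserts the union is an interval.
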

\begin{prop}\label{prop:addsegpro}
    Let $a,b, c_1, \dots, c_k, d_1,\dots, d_k\ge 0$ be integers with $a < b.$
    If \begin{equation}b-a + \sum_{j < i}d_jc_j\ge c_i\end{equation} for $1\le i\le k,$ then \begin{equation}[a,b) + \sum_{i=1}^kd_i\{0, c_i\}= \left[a, b +  \sum_{i=1}^kd_ic_i\right).\end{equation}
    \begin{proof}
        This follows from Proposition \ref{prop:addseg} and induction on $k$.
    \end{proof}
\end{prop}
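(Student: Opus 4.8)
The plan is to prove Proposition~\ref{prop:addsegpro} by induction on $k$, peeling off the last summand $d_k\{0,c_k\}$ at each step and invoking Proposition~\ref{prop:addseg}. The base case $k=0$ is trivial (the hypothesis is vacuous and both sides equal $[a,b)$), and the case $k=1$ is literally Proposition~\ref{prop:addseg}, since then the condition ``$b-a+\sum_{j<1}d_jc_j\ge c_1$'' is just $b-a\ge c_1$.

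For the inductive step, assume the statement holds for $k-1$, and let $a<b$ and $c_1,\dots,c_k,d_1,\dots,d_k\ge 0$ satisfy $b-a+\sum_{j<i}d_jc_j\ge c_i$ for all $1\le i\le k$. First I would observe that the restricted data $a,b,c_1,\dots,c_{k-1},d_1,\dots,d_{k-1}$ still satisfies the hypothesis for all $1\le i\le k-1$, since these inequalities form a subfamily of the ones assumed. Applying the inductive hypothesis gives
\[
[a,b) + \sum_{i=1}^{k-1} d_i\{0,c_i\} = [a, b'), \qquad b' := b + \sum_{i=1}^{k-1} d_i c_i .
\]
Using commutativity and associativity of sumset addition, $[a,b)+\sum_{i=1}^{k}d_i\{0,c_i\} = [a,b') + d_k\{0,c_k\}$. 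Now the $i=k$ instance of the hypothesis reads $b-a+\sum_{j<k}d_jc_j\ge c_k$, i.e.\ $b'-a\ge c_k$, which is exactly the condition needed to apply Proposition~\ref{prop:addseg} with $(a,b',d_k,c_k)$ in place of $(a,b,d,c)$. This yields $[a,b')+d_k\{0,c_k\} = [a, b'+d_kc_k) = [a, b+\sum_{i=1}^k d_ic_i)$, completing the induction.

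I do not anticipate a genuine obstacle here; the only point requiring care is the bookkeeping: checking that the partial hypothesis is inherited by the first $k-1$ terms, and recognizing that the $i=k$ inequality is precisely the premise $b'-a\ge c_k$ of Proposition~\ref{prop:addseg} after the substitution $b\mapsto b'$. One could instead peel off the \emph{first} summand, but then the left endpoint of the intermediate interval would shift and all the inequalities would have to be re-indexed, so peeling off the last summand is the cleaner route.
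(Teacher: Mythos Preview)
Your proposal is correct and is exactly the approach the paper indicates: induction on $k$, with Proposition~\ref{prop:addseg} supplying the inductive step. The paper's proof is the one-liner ``This follows from Proposition~\ref{prop:addseg} and induction on $k$,'' and your write-up is precisely the natural unpacking of that sentence.
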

\begin{lm}\label{lm:startsum}
    For $n\ge 3$ one has
    \begin{align}
        &\frac{n(n-3)}2\left\{0,\frac{n(n-3)}2\right\}+ \floor*{\frac{n-3}2}\left\{0,n-1\right\} + (n-2)\left\{0, \frac{(n-1)(n-2)}2\right\}\notag\\
        &\supseteq \left[(n-2)\frac{n(n-3)}{2}, \frac{(n-1)(n-2)}2\cdot\frac{n(n-3)}2\right)\label{eq:startsum}.
    \end{align}
    \begin{proof}
        Start with any $y$ with $(n-2)\frac{n(n-3)}{2}\le y< \frac{(n-1)(n-2)}2\cdot\frac{n(n-3)}2.$ By the division algorithm one can write $y = c\frac{n(n-3)}2+d$ with $0 \le d < \frac{n(n-3)}2,$ and by the constraint on $y$ we have $n-2\le c< \frac{(n-1)(n-2)}2$ and so $n-2\le c \le \frac{n(n-3)}2.$ By the division algorithm, one can write $d = q(n-1)+r$ with $0\le r< n-1$ and \begin{equation}q < \frac{n(n-3)}{2(n-1)} <\frac{(n-1)(n-2)}{2(n-1)} = \frac{n-2}2\end{equation} and so $q\le \floor*{\frac{n-3}2}.$ Thus, we have 
        \begin{align}
            y &= c\frac{n(n-3)}2 + d = c\frac{n(n-3)}2 + q(n-1)+r \notag\\&= (c-r)\frac{n(n-3)}2+q(n-1)+r\frac{(n-1)(n-2)}2
        \end{align}
        with $0\le c-r\le \frac{n(n-3)}2$ and $q \le \floor*{\frac{n-3}2}$ and $r\le n-2.$ This shows \eqref{eq:startsum}.
    \end{proof}
\end{lm}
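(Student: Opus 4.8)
The plan is to produce, for every integer $y$ in the target interval, an explicit witness that $y$ lies in the sumset. Set $M := \frac{n(n-3)}{2}$ and $L := \frac{(n-1)(n-2)}{2}$; since $(n-1)(n-2) - n(n-3) = 2$ we have $L = M + 1$, and this single identity drives the whole argument. In this notation the left-hand side of \eqref{eq:startsum} is the set of all $\alpha M + \beta(n-1) + \gamma L$ with $0 \le \alpha \le M$, $0 \le \beta \le \floor*{\frac{n-3}{2}}$, and $0 \le \gamma \le n-2$, while the right-hand side is the interval $\bigl[(n-2)M, LM\bigr)$. The case $n = 3$ is vacuous since this interval is empty, so we may assume $n \ge 4$ and hence $M \ge 2 > 0$.

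First I would apply the division algorithm to $y$ with divisor $M$, writing $y = cM + d$ with $0 \le d < M$; the constraints $(n-2)M \le y < LM = (M+1)M$ then force $n - 2 \le c \le M$. Next I would divide the remainder $d$ by $n-1$, writing $d = q(n-1) + r$ with $0 \le r \le n-2$. Using $L = M+1$ to fold $r$ into the leading term gives
\[
y \;=\; cM + q(n-1) + r \;=\; (c-r)M + q(n-1) + rL ,
\]
which exhibits $y$ as a point of the sumset with coefficients $\alpha = c - r$, $\beta = q$, $\gamma = r$, provided these are in range.

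The range check is the crux, but it is short and elementary. The bound $0 \le \gamma = r \le n-2$ is immediate. For $\beta = q = \floor*{d/(n-1)}$ one has $q < \frac{M}{n-1} = \frac{n(n-3)}{2(n-1)} < \frac{(n-1)(n-2)}{2(n-1)} = \frac{n-2}{2}$, so by integrality $q \le \floor*{\frac{n-3}{2}}$; here the strict inequality $n(n-3) < (n-1)(n-2)$ is exactly what is needed. Finally $0 \le \alpha = c - r \le M$: the upper bound is clear from $c \le M$ and $r \ge 0$, and the lower bound $r \le c$ follows from $r \le n-2 \le c$. I expect the genuinely load-bearing inequality to be $c \ge n-2$ — equivalently, the decision to start the interval at $(n-2)M$ rather than at $0$ — since it is precisely what leaves enough room in the leading coefficient to subtract off $r$ once $L$ has been folded into $M$.
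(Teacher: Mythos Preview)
Your proof is correct and follows essentially the same approach as the paper: both divide $y$ by $M=\frac{n(n-3)}2$ to get $y=cM+d$, then divide $d$ by $n-1$ to get $d=q(n-1)+r$, and finally use the identity $L=M+1$ to rewrite $y=(c-r)M+q(n-1)+rL$ with the same bound checks. Your version is slightly cleaner in that you name the identity $L=M+1$ up front and handle the vacuous case $n=3$ explicitly, but the substance is identical.
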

\begin{lm}
    For $n\ge 5$, for $(n-2)\frac{n(n-3)}{2}\le p\le \frac{n!-2}{2}$, there exists a Knuth-closed set of size $p$ without monotone elements.
    \begin{proof}
        Note that, using the hook length formula, we have $f^{(n-2, 1, 1)} = \binom{n-1}2 = \frac{(n-1)(n-2)}{2},$ $f^{(n-2, 2)} =  \frac{n(n-3)}2,$ and $f^{(n-1,1)} = n-1$.
        Now note that 
        \begin{align}
            &\frac{(n-1)(n-2)}2\cdot\frac{n(n-3)}2- (n-2)\frac{n(n-3)}{2} \notag\\&= \frac{n(n-3)(n-2)(n-3)}4\ge \frac{(n-1)(n-2)}2\ge n-1
        \end{align}
        for $n\ge 5.$ Thus, by Lemma \ref{lm:startsum} and Proposition \ref{prop:addsegpro} we have 
        \begin{align}\label{eq:startsumsq}
            &f^{(n-2, 2)}\{0,f^{(n-2, 2)}\}+f^{(n-1,1)}\{0,f^{(n-1,1)}\}\\
            &+ f^{(n-2,1,1)}\{0, f^{(n-2,1,1)}\}\supseteq \left[(n-2)\frac{n(n-3)}{2},T\right)\notag
        \end{align}
        where 
        \begin{align}
            T &:= \frac{(n-1)(n-2)n(n-3)}4+\pa{n-1-\floor*{\frac{n-3}2}}(n-1) \notag\\&+ \frac{(n-2)(n-3)}2\cdot \frac{(n-1)(n-2)}2\label{eq:calt}\\
            &=\frac{(n-2)(n-3)(n-1)^2}2 + \pa{n-1-\floor*{\frac{n-3}2}}(n-1).\notag
        \end{align}
        Note that
        we have 
        \begin{equation}
            T - (n-2)\frac{n(n-3)}2\ge \frac{(n-2)(n-3)}2\pa{(n-1)^2 - n}\ge (n-2)(n-3)\ge n-1,
        \end{equation}
        so by \eqref{eq:startsumsq}, \Cref{lm:squarelarger}, and \Cref{prop:addsegpro} we have 
        \begin{equation}
            \sum_{\lambda\vdash n, \lambda \ne (1^n), (n)}f^{\lambda}\{0,f^{\lambda}\}\sps \left[(n-2)\frac{n(n-3)}2,T + \hspace{-3mm}\sum_{\lambda\notin \{(1^n), (n), (n-1,1), (n-2,1,1), (n-2,2)\}}\hspace{-3mm}(f^{\lambda})^2\right)
        \end{equation}
        Note that we have 
        \begin{equation}
            \sum_{\lambda\vdash n}(f^{\lambda})^2 = n!
        \end{equation}
        by the Robinson--Schensted correspondence, so 
        \begin{align}
            &T + \sum_{\lambda\notin \{(1^n), (n), (n-1,1), (n-2,1,1), (n-2,2)\}}(f^{\lambda})^2\notag\\ 
            &=  T + n! - \sum_{\lambda\in \{(1^n), (n), (n-1,1), (n-2,1,1), (n-2,2)\}}(f^{\lambda})^2\notag\\
            &= \frac{(n-2)(n-3)(n-1)^2}2 + \pa{n-1-\floor*{\frac{n-3}2}}(n-1) + n! \notag\\&- 2 - \pa{\frac{n(n-3)}2}^2 -\pa{\frac{(n-1)(n-2)}2}^2 - (n-1)^2\\
            &= n!-\frac{n^3}2 +3n^2-\frac{11n}2 - \floor*{\frac{n-3}2}(n-1)\ge n!-\frac{n^3}2 +3n^2-\frac{11n}2 - \frac{n-3}2(n-1)\notag\\
            &= n! - \frac{n^3-5n^2+7n+3}2 = n! -\frac{n(n-1)(n-2)-(2n+1)(n-3)}2\ge \frac{n!}2.\notag
        \end{align}
        By Proposition \ref{prop:knuthcsize}, we are done.
    \end{proof}\label{lm:bigsymsize}
\end{lm}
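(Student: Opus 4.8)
The plan is to turn the statement into a purely additive question about sumsets of integers and then assemble the pieces already established. By Proposition~\ref{prop:knuthcsize}, a Knuth-closed set of size $p$ without monotone elements exists exactly when
\[
p \in \sum_{\lambda\vdash n,\ \lambda\ne(1^n),(n)} f^\lambda\{0,f^\lambda\},
\]
so it suffices to prove that this sumset contains every integer in $\bigl[(n-2)\tfrac{n(n-3)}{2},\ \tfrac{n!-2}{2}\bigr]$; since $\tfrac{n!-2}{2}=\tfrac{n!}{2}-1$ is an integer, it is enough to cover the half-open interval up to $\tfrac{n!}{2}$. The whole argument is then an interval-filling induction powered by Propositions~\ref{prop:addseg} and~\ref{prop:addsegpro}: $[a,b)+d\{0,c\}=[a,b+dc)$ as soon as $b-a\ge c$, and this iterates.

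First I would seed a long starting interval from three small partitions. The hook length formula (Theorem~\ref{thm:hooklength}) gives $f^{(n-2,2)}=\tfrac{n(n-3)}{2}$, $f^{(n-1,1)}=n-1$, and $f^{(n-2,1,1)}=\binom{n-1}{2}=\tfrac{(n-1)(n-2)}{2}$. Because $\lfloor\tfrac{n-3}{2}\rfloor\le n-1$ and $n-2\le\binom{n-1}{2}$, the full sumset contains the truncated sumset of Lemma~\ref{lm:startsum}, which already covers $\bigl[(n-2)\tfrac{n(n-3)}{2},\ \binom{n-1}{2}\cdot\tfrac{n(n-3)}{2}\bigr)$; that lemma is proved by the division algorithm applied twice, its only input being that $\binom{n-1}{2}\cdot\tfrac{n(n-3)}{2}-(n-2)\tfrac{n(n-3)}{2}\ge n-1$ for $n\ge5$. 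Spending the remaining copies of $\{0,n-1\}$ and $\{0,\binom{n-1}{2}\}$ (only $\lfloor\tfrac{n-3}{2}\rfloor$ of the $n-1$ available, and $n-2$ of the $\binom{n-1}{2}$ available) via Proposition~\ref{prop:addsegpro} pushes the right endpoint out to an explicit value, call it $T$.

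Next I would bootstrap with every remaining partition. Order the partitions of $n$ other than $(1^n),(n),(n-1,1),(n-2,1,1),(n-2,2)$ as $\lambda^{(1)},\dots,\lambda^{(t)}$ with $f^{\lambda^{(i)}}$ nondecreasing; Lemma~\ref{lm:squarelarger} supplies precisely the inequality $(n-1)+\sum_{j<i}\bigl(f^{\lambda^{(j)}}\bigr)^2\ge f^{\lambda^{(i)}}$ needed for Proposition~\ref{prop:addsegpro} to absorb the blocks $f^{\lambda^{(i)}}\{0,f^{\lambda^{(i)}}\}$ one at a time (the $n-1$ there being swallowed by the length $b-a$ of the current interval, which already exceeds $n-1$). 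This shows the sumset contains $\bigl[(n-2)\tfrac{n(n-3)}{2},\ T+\sum_i (f^{\lambda^{(i)}})^2\bigr)$, and I would finish by evaluating the right endpoint with the Robinson--Schensted identity $\sum_{\lambda\vdash n}(f^\lambda)^2=n!$: this rewrites $T+\sum_i(f^{\lambda^{(i)}})^2$ as $T+n!-\bigl(2+(n-1)^2+\binom{n-1}{2}^2+\bigl(\tfrac{n(n-3)}{2}\bigr)^2\bigr)$, whereupon the claim collapses to a polynomial inequality saying this quantity is at least $\tfrac{n!}{2}$, true for all $n\ge5$.

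The step I expect to cost the most care is the bookkeeping around the exceptional partitions: one must check that the five shapes $(1^n),(n),(n-1,1),(n-2,1,1),(n-2,2)$ set aside for Lemma~\ref{lm:squarelarger} are exactly the ones whose $\{0,f^\lambda\}$-blocks are consumed in the seeding step (allowing for transposes, which share the same $f^\lambda$), with no partition double-counted, and then grind the closing arithmetic through to confirm $T+n!-(\text{those five squares})\ge\tfrac{n!}{2}$. Separately, the small cases---here $n=5$, and $n=6$ as well, where several of these shapes coincide or degenerate---should be verified by hand so that Lemmas~\ref{lm:startsum} and~\ref{lm:squarelarger} genuinely apply.
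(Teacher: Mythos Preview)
Your proposal is correct and follows essentially the same approach as the paper's proof: seed an interval via Lemma~\ref{lm:startsum} using the three partitions $(n-2,2),(n-1,1),(n-2,1,1)$, extend with the leftover copies via Proposition~\ref{prop:addsegpro}, bootstrap over all remaining partitions using Lemma~\ref{lm:squarelarger}, and close with the Robinson--Schensted identity $\sum_\lambda (f^\lambda)^2=n!$. Your cautionary remarks about transposes and small cases turn out not to be issues here---the five shapes excluded in Lemma~\ref{lm:squarelarger} are exactly the two monotone shapes plus the three seeded ones (their transposes remain among the $\lambda^{(i)}$ and are absorbed in the bootstrap), and the paper's argument works uniformly for $n\ge 5$ without separate verification.
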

We will now construct symmetric sets of size smaller than $(n-2)\frac{n(n-3)}{2}$. For this, it will be helpful to have the following construction of a symmetric set of size $n$.
\begin{prop}\label{prop:symmsizen}
    A set of permutations $\{\pi_1, \dots, \pi_n\}$ with $\Des(\pi_i) = \{i-1, i\}\cap [n-1]$ for all $i$ or with $\Des(\pi_i) = [n-1]\sm \{i-1, i\}$ for all $i$ is symmetric.
    \begin{proof}
        By taking complement, without loss of generality assume $\Des(\pi_i) = \{i-1, i\}\cap [n-1]$ for all $i$.
        Note that $A(S) = (S, (A_1, \dots, A_{n-1}))$ where $A_i = \{\pi_i, \pi_{i+1}\}.$ We prove that $A(S)$ is harmonic. Note that any two nonconsecutive sets have empty intersection, and thus any intersection of at least three sets is empty, so to prove harmonicity it suffices to show that all the sets are of the same size and any intersection of the form $A_i\cap A_{i+1}$ has the same size. We can check that $|A_i| = 2$ for all $i$ and $|A_i\cap A_{i+1}| = 1$ for all $i$.
    \end{proof}
\end{prop}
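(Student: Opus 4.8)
The plan is to invoke Proposition~\ref{prop:symm2harset} and reduce the statement to showing that the associated set system $A(S)$ is harmonic, where $S = \{\pi_1,\dots,\pi_n\}$.

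First I would dispose of the second alternative by complementation. Taking the complement of a permutation (Definition~\ref{def:permcr}) replaces its descent set by the complement in $[n-1]$, so a set with $\Des(\pi_i) = [n-1]\sm(\{i-1,i\}\cap[n-1])$ for all $i$ is exactly $S^c$ for a set $S$ of the first kind; by Corollary~\ref{cor:crset} it then suffices to treat the case $\Des(\pi_i) = \{i-1,i\}\cap[n-1]$ for all $i$. (One could equally well argue via Proposition~\ref{prop:comsetperm} together with Corollary~\ref{cor:comharset}.)

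Next I would compute $A(S) = (S,(A_1,\dots,A_{n-1}))$ explicitly. Since $i \in \Des(\pi_j) = \{j-1,j\}\cap[n-1]$ is equivalent to $j \in \{i,i+1\}$, one obtains $A_i = \{\pi_i,\pi_{i+1}\}$ for every $i \in [n-1]$, the endpoints causing no trouble because $\Des(\pi_1) = \{1\}$ and $\Des(\pi_n) = \{n-1\}$. Thus $A(S)$ is ``path-like'': each $A_i$ has two elements, consecutive sets $A_i$ and $A_{i+1}$ share exactly $\pi_{i+1}$, and $A_i \cap A_j = \varnothing$ whenever $\abs{i-j}\ge 2$.

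Finally I would verify the harmonicity condition of Definition~\ref{def:harmonicset} directly. Because non-consecutive sets are disjoint, $H_I = \bigcap_{i\in I}A_i$ is empty unless $I$ is a set of consecutive integers, and the intersection of three consecutive sets is already empty (it equals $\{\pi_{a+1}\}\cap A_{a+2} = \varnothing$). Hence $\abs{H_I}$ depends only on the run decomposition of $I$: it equals $n$ when $I = \varnothing$, equals $2$ when the run decomposition is $(1)$, equals $1$ when it is $(2)$, and equals $0$ in every remaining case. So $\abs{H_I} = \abs{H_J}$ whenever $I$ and $J$ have the same run decomposition, which is precisely harmonicity, and Proposition~\ref{prop:symm2harset} then yields that $S$ is symmetric. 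I do not expect a genuine obstacle here; the only points requiring a little care are the boundary bookkeeping in identifying $A_1$ and $A_{n-1}$ and the observation that all triple (and larger) intersections vanish, which is what makes the set system effectively one-dimensional.
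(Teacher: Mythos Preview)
Your proposal is correct and follows essentially the same approach as the paper: reduce to the first case by complementation, identify $A_i=\{\pi_i,\pi_{i+1}\}$, and verify harmonicity by observing that nonconsecutive $A_i$'s are disjoint and triple intersections vanish. Your write-up is slightly more explicit in tabulating $|H_I|$ by run decomposition, but the argument is the same.
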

The next lemma counts the number of permutations that are relevant to the construction in \Cref{prop:symmsizen}.

\begin{lm}\label{lm:countper}
    For any $1\le i\le n,$ there are at least $n-1$ permutations with descent set exactly $\{i-1, i\}\cap [n-1]$, and at least $n-1$ permutations with descent set exactly $[n-1]\sm \{i-1, i\}.$ For any $1\le i\le n-1$ there are at least $n-1$ permutations with descent set exactly $\{i\}$, and at least $n-1$ permutations with descent set exactly $[n-1]\sm\{i\}$.
    \begin{proof}
        By taking complement, it suffices to show that for $1\le i\le n,$ there are at least $n-1$ permutations with descent set exactly $\{i-1, i\}\cap [n-1],$ and for $1\le i\le n-1$ there are at least $n-1$ permutations with descent set exactly $\{i\}.$

        To construct a permutation whose descent set is a subset of $\{i\}$, it suffices to choose the first $i$ letters. Thus, the number of permutation whose descent set is a subset of $\{i\}$ is $\binom ni$, and so there are $\binom ni -1\ge n-1$ permutations whose descent set is exactly $\{i\},$ since $\iota_n$ is the only permutation with empty descent set.

        For $1 < i < n,$ to construct a permutation whose descent set is a subset of $\{i-1,i\}$, it suffices to choose the first $i-1$ letter, and then the $i$-th letter. This can be done in $\binom n{i-1}(n-i+1)$ ways. Thus, the number of permutation with descent set exactly $\{i-1,i\}$ is 
        \begin{align}
            &\binom n{i-1}(n-i+1) - \binom{n}{i-1}-\binom ni + 1 = \binom{n}{i-1}\pa{n-i-\frac{n-i+1}{i}}+1\notag\\
            &=\binom n{i-1}\frac{(n-i)(i-1)-1}{i} + 1\ge n-1. \qedhere
        \end{align}

    \end{proof}
\end{lm}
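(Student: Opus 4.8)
The plan is to reduce the four assertions to two using the complement involution, compute the number of permutations whose descent set is \emph{contained} in a small set as a multinomial coefficient, pass to the \emph{exact} count by inclusion--exclusion, and finish with an elementary inequality.

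First, recall from Definition~\ref{def:permcr} that $w\mapsto w^c$ is an involution on $S_n$ with $w^c(i)<w^c(i+1)$ exactly when $w(i)>w(i+1)$; hence $\Des(w^c)=[n-1]\sm\Des(w)$, so the number of permutations of $S_n$ with descent set exactly $D$ equals the number with descent set exactly $[n-1]\sm D$. Thus it suffices to produce, for $1\le i\le n-1$, at least $n-1$ permutations with descent set exactly $\{i\}$, and for $1\le i\le n$, at least $n-1$ permutations with descent set exactly $\{i-1,i\}\cap[n-1]$.

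Next, for $T\sbs[n-1]$, a permutation $w\in S_n$ satisfies $\Des(w)\sbs T$ if and only if $w$ is increasing along each maximal block of consecutive positions whose internal steps all avoid $T$; since such a $w$ is determined by which values fill each block, the count is the corresponding multinomial coefficient. In particular there are $\binom ni$ permutations with $\Des(w)\sbs\{i\}$, and for $1<i<n$ there are $\binom n{i-1}(n-i+1)$ with $\Des(w)\sbs\{i-1,i\}$ (choose the first $i-1$ values, then the value in position $i$). Since $\iota_n$ is the only permutation with empty descent set, the number with descent set exactly $\{i\}$ is $\binom ni-1\ge n-1$ for $1\le i\le n-1$; this also settles the cases $i=1$ and $i=n$ of the second assertion, where $\{i-1,i\}\cap[n-1]$ equals $\{1\}$ or $\{n-1\}$.

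It remains to treat descent set exactly $\{i-1,i\}$ for $1<i<n$. By inclusion--exclusion over the subsets of $\{i-1,i\}$, this count is
\[
\binom n{i-1}(n-i+1)-\binom n{i-1}-\binom ni+1,
\]
which, using $\binom ni=\binom n{i-1}\tfrac{n-i+1}{i}$, simplifies to $\binom n{i-1}\tfrac{(n-i)(i-1)-1}{i}+1$. The only step I expect to need genuine care is checking that this is at least $n-1$: the bound is weakest at the extreme values $i=2$ and $i=n-1$, where the expression equals $\tfrac{n(n-3)}2+1$ and the inequality reduces to $(n-1)(n-4)\ge0$, while for interior $i$ the factor $(n-i)(i-1)$ is larger and the bound is comfortably met. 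So the main (mild) obstacle is this final inequality, handled by isolating those extreme values of $i$.
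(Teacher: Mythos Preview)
Your proof is correct and follows essentially the same route as the paper: reduce via complement, count permutations with descent set contained in the target via multinomial coefficients, apply inclusion--exclusion, and simplify to $\binom{n}{i-1}\frac{(n-i)(i-1)-1}{i}+1$. You are slightly more explicit than the paper in two places: you spell out why $i=1$ and $i=n$ in the $\{i-1,i\}\cap[n-1]$ case reduce to the singleton case, and you actually justify the final inequality $\ge n-1$ by isolating the extremal values $i=2$ and $i=n-1$ (the paper simply asserts it).
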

\begin{lm}\label{lm:bounddes}
    Every permutation of a Knuth class of a tableau of shape $\lambda$ has at most $n-\lambda_1$ descents.
    \begin{proof}
        By \Cref{prop:rskprop}, it suffices to show that a tableau of shape $\lambda$ cannot have more than $n-\lambda_1$ descents. Note that if $i$ is a descent then $i+1$ cannot be on the first row, and there are only $n-\lambda_1$ boxes not in the first row.
    \end{proof}
\end{lm}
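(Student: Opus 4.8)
The plan is to transfer the problem onto the recording tableau of the Robinson--Schensted correspondence and then close it with a one-line counting estimate. By \Cref{prop:rskprop}, a permutation $w$ lying in the Knuth class of a tableau $P$ of shape $\lambda$ maps under Robinson--Schensted to a pair $(P, Q)$ with $Q \in \SYT(\lambda)$, and moreover $\Des(w) = \Des(Q)$. So it suffices to show that every $Q \in \SYT(\lambda)$, where $\lambda \vdash n$, satisfies $\abs{\Des(Q)} \le n - \lambda_1$.

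First I would recall that $\Des(Q) = \{i \mid i+1 \text{ lies on a strictly lower row than } i \text{ in } Q\}$. I would then observe that the assignment $i \mapsto i+1$ is injective on $\Des(Q)$ and that its image is contained in the set of entries of $Q$ that do not sit in the first (top) row: if $i \in \Des(Q)$ then $i+1$ sits on a row of index at least $2$, since it is strictly below $i$, which itself sits on a row of index at least $1$. A tableau of shape $\lambda \vdash n$ has exactly $\lambda_1$ entries in its first row, hence $n - \lambda_1$ entries in the remaining rows, and therefore $\abs{\Des(Q)} = \abs{\{i+1 \mid i \in \Des(Q)\}} \le n - \lambda_1$, as claimed.

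I do not expect any real obstacle here: the entire content is the reduction via \Cref{prop:rskprop} together with the elementary fact that a descent $i$ of a standard Young tableau forces $i+1$ out of the first row. The only point worth spelling out is that distinct descents $i$ yield distinct values $i+1$, so the number of such values really is bounded by the number of non-first-row cells, namely $n - \lambda_1$.
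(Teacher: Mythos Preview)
Your proof is correct and follows essentially the same approach as the paper: reduce via \Cref{prop:rskprop} to bounding $\abs{\Des(Q)}$ for $Q\in\SYT(\lambda)$, then note that each descent $i$ forces $i+1$ off the first row, so there are at most $n-\lambda_1$ descents. The only difference is that you spell out the injectivity of $i\mapsto i+1$ explicitly, which the paper leaves implicit.
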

We now use the construction in Proposition \ref{prop:symmsizen} to construct symmetric sets of smaller sizes. The idea is to write the desired size $p$ as a linear combination of $n-1$, $n$, $\frac{n(n-3)}2$, then use the construction in \Cref{prop:symmsizen} together with Knuth classes of size $n-1, \frac{n(n-3)}2.$ Unlike the proof of \Cref{lm:medsymsize}, which just take unions of disjoint Knuth classes, one needs to be careful to make the construction from Proposition \ref{prop:symmsizen} disjoint from the Knuth classes.
\begin{lm}\label{lm:medsymsize}
    For $n\ge 5$ and $(n-1)(n-3)\le p < (n-2)\frac{n(n-3)}{2},$ there is a symmetric set of size $p$ without monotone elements.
    \begin{proof}
        We first consider $n\ge 6.$
        Note that we have \begin{equation}2(n-1)^2 - (n-1)(n-3)\ge \frac{(n-1)(n-2)}2\ge \frac{n(n-3)}2\end{equation}
        so by Proposition \ref{prop:addseg} we have \begin{equation}\left[(n-1)(n-3), 2(n-1)^2\right) + (n-3)\{0, \frac{n(n-3)}2\}\sps \left[(n-1)(n-3), (n-2)\frac{n(n-3)}2\right).\end{equation}
        Thus, we can write $p = a + t\frac{n(n-3)}2$ for some $0\le t\le n-3,$ and $(n-1)(n-3)\le a < 2(n-1)^2.$ Now by the division algorithm, we can write $a = q(n-1)+r$ where $0\le r < n-1$, and $n-3\le q < 2(n-1)$. If $q \ge r$ we  write $a = (q-r)(n-1)+rn$. The only case when $q < r$ is when $q = n-3, r = n-2$ and thus $a = n(n-3)$, we write $a = 2\frac{n(n-3)}2$ instead in this case. Thus, in both case we can write 
        \begin{equation}
            p = u(n-1)+vn + d\frac{n(n-3)}2
        \end{equation}
        where $0\le d\le n-1$ (we may have to add at most $2$ to $t$) and $0\le u + v\le q < 2(n-1)$ and $0\le v\le r < n-1.$

        Let $y = \min(u, n-1)$. Take $d$ Knuth classes of shape $(n-2,2)$, $y$ Knuth classes of shape $(n-1, 1)$ and $u-y$ Knuth classes of shape $(n-1,1)^t$. Now for $n\ge 6$ we claim that for each $1\le i\le n$ there are still at least $v$ permutations of descent set exactly $[n-1]\sm\{i-1,i\}$ that has not been taken. Indeed, these permutations has at least $n-3\ge 3$ descents, so by \Cref{lm:bounddes} the only Knuth classes among the above that could have taken such permutations is the $u-y$ Knuth classes of shape $(n-1, 1)^t$. If $u-y = 0$ then the number of  permutations of descent set exactly $[n-1]\sm\{i-1,i\}$ remaining is at least $n-1$ by \Cref{lm:countper}, and $n-1\ge v$ so the claim is true. Now if $u-y > 0$ then $y = n-1$ and thus \begin{equation}u-y + v < 2(n-1) -(n-1) < n-1.\end{equation} Note that the shape $(n-1,1)^t$ has $n-1$ SYT formed by placing $i+1$ in the second column and the rest in the first column, which has descent set $[n-1]\sm \{i\}$, for $1\le i< n$. Thus, by \Cref{prop:rskprop}, for each $i$, each Knuth class of shape $(n-1,1)^t$ contains at most one permutation of descent set exactly $[n-1]\sm \{i-1 , i\}.$ Since $(u-y)+v < n-1$, there are still at least $v$ permutations of descent set exactly $[n-1]\sm \{i-1 , i\}.$

        Since there are at least $v$ permutations of descent set exactly $[n-1]\sm \{i-1 , i\}$ for each $i$, we can form $v$ sets where each set has the form $\{\pi_1, \dots, \pi_n\}$ with $\Des(\pi_i) = [n-1]\sm \{i-1, i\},$ which is symmetric by Proposition \ref{prop:symmsizen}. This together with the Knuth classes above form a symmetric set of size exactly $p.$

        We proved the statement for $n\ge 6.$ For $n = 5$ note that $f^{(4,1)} = 4, f^{(3,2)} = 5, f^{(3,1,1)} = 6$ and one can verify that $4\{0,4\}+ 5\{0,5\}+6\{0,6\}\sps [8,15)$, so we are done by \Cref{prop:knuthcsize}.
    \end{proof}
\end{lm}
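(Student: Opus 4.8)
The plan is to realize the desired size as a disjoint union of three kinds of symmetric pieces we already control: Knuth classes of shape $(n-2,2)$, which have size $\tfrac{n(n-3)}2$; Knuth classes of size $n-1$, of shape $(n-1,1)$ or of its transpose $(n-1,1)^t$; and copies of the size-$n$ symmetric set of \Cref{prop:symmsizen}, in the version with $\Des(\pi_i) = [n-1]\sm(\{i-1,i\}\cap[n-1])$. The first step is purely arithmetic: using \Cref{prop:addseg} and \Cref{prop:addsegpro} one checks that every $p$ in the stated range can be written as $p = u(n-1) + vn + d\cdot\tfrac{n(n-3)}2$ for nonnegative integers $u,v,d$ with $d\le n-1$, $u+v<2(n-1)$, and $v<n-1$; the exact sizes of these bounds are precisely what the disjointness argument later needs. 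Concretely one peels off a multiple of $\tfrac{n(n-3)}2$ to land in an interval of length $2(n-1)^2$ and then applies the division algorithm twice to split the remainder into copies of $n-1$ and $n$.

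Given such a decomposition I would take $d$ Knuth classes of shape $(n-2,2)$ and $u$ Knuth classes of size $n-1$, the latter using shape $(n-1,1)$ as much as possible — there being only $n-1$ tableaux of that shape — and filling any remainder with shape $(n-1,1)^t$. The point of preferring $(n-1,1)$ is a descent count: a permutation with descent set exactly $[n-1]\sm\{i-1,i\}$ has at least $n-3\ge 3$ descents, so by \Cref{lm:bounddes} it cannot lie in a Knuth class of shape $(n-2,2)$ or $(n-1,1)$, all of whose members have at most two descents; the only Knuth classes in play that can meet this family are those of transpose shape, and the explicit list of the $n-1$ tableaux of shape $(n-1,1)^t$ together with $\Des(w)=\Des(Q)$ (\Cref{prop:rskprop}) shows that each such class contains at most one permutation of descent set exactly $[n-1]\sm\{i-1,i\}$, for each $i$. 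Combining this with the lower bound $n-1$ from \Cref{lm:countper} and a short case split — if no transpose classes are used the bound is immediate, and otherwise the number used equals $u-(n-1)$ and $u+v<2(n-1)$ forces $u-(n-1)+v<n-1$ — one concludes that for every $i$ at least $v$ permutations with descent set exactly $[n-1]\sm\{i-1,i\}$ remain available.

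Using those leftover permutations I would assemble $v$ pairwise-disjoint copies of $\{\pi_1,\dots,\pi_n\}$ with $\Des(\pi_i) = [n-1]\sm(\{i-1,i\}\cap[n-1])$ — disjointness across copies is automatic because the descent sets $\Des(\pi_i)$ are distinct for distinct $i$ — each symmetric by \Cref{prop:symmsizen}. The union of the $d$ Knuth classes of shape $(n-2,2)$, the $u$ Knuth classes of size $n-1$, and these $v$ copies is a disjoint union of symmetric sets, hence symmetric by \Cref{prop:permcupsm}; it has size $u(n-1)+vn+d\tfrac{n(n-3)}2=p$ and contains neither $\iota_n$ nor $\delta_n$. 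This handles $n\ge 6$. The value $n=5$ lies outside the bounds used above, but there the target interval is $[(n-1)(n-3),(n-2)\tfrac{n(n-3)}2)=[8,15)$, which is covered by Knuth-closed sets alone: since $f^{(4,1)}=4$, $f^{(3,2)}=5$, $f^{(3,1,1)}=6$ and $4\{0,4\}+5\{0,5\}+6\{0,6\}\sps[8,15)$, \Cref{prop:knuthcsize} finishes it.

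I expect the main obstacle to be exactly this disjointness bookkeeping. The decomposition of $p$ is highly non-unique, and one must choose it so that the number of leftover permutations of each descent type $[n-1]\sm\{i-1,i\}$ is provably at least $v$; this forces a delicate balance between how many size-$(n-1)$ Knuth classes are used, which of the two admissible shapes they take, and the descent bound of \Cref{lm:bounddes}, with the tightest case being the regime $u\ge n-1$ in which transpose-shape classes become unavoidable and both inequalities $u+v<2(n-1)$ and $v<n-1$ must be used to their full extent.
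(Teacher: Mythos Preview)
Your proposal is correct and follows essentially the same approach as the paper: the same arithmetic decomposition $p = u(n-1)+vn+d\tfrac{n(n-3)}2$ obtained by first peeling off copies of $\tfrac{n(n-3)}2$ via \Cref{prop:addseg} and then the division algorithm, the same choice of Knuth classes of shapes $(n-2,2)$, $(n-1,1)$, and $(n-1,1)^t$ with the latter used only for the overflow past $n-1$, the same disjointness argument via \Cref{lm:bounddes} and the case split on whether transpose-shape classes appear, and the same direct verification for $n=5$. The only quibble is your parenthetical ``disjointness across copies is automatic because the descent sets $\Des(\pi_i)$ are distinct for distinct $i$'': that explains why each copy has $n$ distinct elements, but disjointness of the $v$ copies from one another comes rather from choosing, for each $i$, $v$ distinct permutations of descent set $[n-1]\sm\{i-1,i\}$, which your availability count guarantees.
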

    \begin{proof}[Proof of Theorem \ref{thm:symsetlargsize}]
        The statement is true for $n\ge 5$ by Lemma \ref{lm:bigsymsize} and Lemma \ref{lm:medsymsize}. For $n = 4$, note that $f^{(3,1)}  = f^{(1,3)} = 3, f^{(2,2)} = 2,$ and one can verify that $2\{0,2\} + 3\{0,3\}+3\{0,3\}\sps [3, 12),$ so by Proposition \ref{prop:knuthcsize}, we are done.
    \end{proof}
We now classify the possible sizes of a symmetric set without monotone elements below $(n-1)(n-3).$ We first show the ``if'' direction of \Cref{thm:symsetsmallsize}.
\begin{lm}\label{lm:symmverysmall}
For $p < (n-1)(n-3)$, there exists a symmetric set of size $p$ without monotone elements if one of the following hold. 
\begin{enumerate}[label=\textup(\arabic*\textup)]
        \item $p = qn-r$ for some $0\le r \le q < n-3,$
        \item $n$ is even and $p = \frac{n(n-3)}2+c(n-1)$ for some $c\ge 0,$
        \item $n$ is odd and $p = \binom{n-1}2+cn$ for some $c\ge 0.$
    \end{enumerate}
\begin{proof}
    The lemma can be easily verified for $n\le 5$, so we assume $n\ge 6$.
    If (1) holds then $p = (q-r)n + r(n-1).$ By \Cref{lm:countper}, we can form $q-r\le n-1$ sets where each set has the form $\{\pi_1, \dots, \pi_n\}$ with $\Des(\pi_i) = [n-1]\sm \{i-1, i\},$ which is symmetric by Proposition \ref{prop:symmsizen}. Take $r$ Knuth classes of shape $(n-1,1).$ By \Cref{lm:bounddes}, these permutations have only one descent, and thus are disjoint from the $q-r$ sets above. Thus, by taking their union we have a symmetric set of size $p.$

    If (2) holds, then note that $c < (n-3)$ and thus we can take $c$ Knuth classes of shape $(n-1,1)$ together with a Knuth class of shape $(n-2,2)$. Note that $f^{(n-2,2)} = \frac{n(n-3)}2,$ so this is a symmetric set of size exactly $p.$
        
    If (3) holds, then note that with $\lambda = (n-2,1,1)$, we have $f^\lambda = \binom{n-1}2$ using the hook length formula. By \Cref{lm:bounddes}, permutations in a Knuth class of shape $(n-2,1,1)$ has at most two descents. Note that $c < n-3$ since $p < (n-1)(n-3).$ By \Cref{lm:countper}, we can form $c\le n-1$ sets where each set has the form $\{\pi_1, \dots, \pi_n\}$ with $\Des(\pi_i) = [n-1]\sm \{i-1, i\}.$ Then we can take a Knuth class of shape $(n-2,1,1)$ which is disjoint with the above $c$ sets since the permutations involved have at least $n-3$ descents. Thus, by taking their union we have a symmetric set of size $p.$
\end{proof}
\end{lm}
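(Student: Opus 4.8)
The plan is to assemble each desired symmetric set from two families of symmetric building blocks: Knuth classes, which are Schur-positive and hence symmetric by Theorem~\ref{thm:kcsp}, and the size-$n$ symmetric sets of Proposition~\ref{prop:symmsizen} whose members have descent set $[n-1]\sm\{i-1,i\}$. Concretely I would use Knuth classes of the shapes $(n-1,1)$, $(n-2,2)$, $(n-2,1,1)$, which by the hook length formula have sizes $n-1$, $\frac{n(n-3)}2$, $\binom{n-1}2$. Since a disjoint union of symmetric sets is symmetric (Proposition~\ref{prop:permcupsm}), it suffices in each case to express $p$ as a nonnegative combination of these sizes together with $n$, and then to check that the chosen pieces may be taken pairwise disjoint and avoid $\iota_n,\delta_n$.

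After disposing of small $n$ (say $n\le 5$) by direct inspection, assume $n\ge 6$. In case~(1) I would write $p=(q-r)n+r(n-1)$ and take $q-r$ copies of the Proposition~\ref{prop:symmsizen} set together with $r$ Knuth classes of shape $(n-1,1)$. In case~(2), take one Knuth class of shape $(n-2,2)$ and $c$ Knuth classes of shape $(n-1,1)$. In case~(3), take one Knuth class of shape $(n-2,1,1)$ and $c$ copies of the Proposition~\ref{prop:symmsizen} set. In every case the hypothesis $p<(n-1)(n-3)$ forces each part count to be strictly below $n-1$ (e.g.\ $c<n-3$ in cases (2) and (3)), so enough distinct Knuth classes of each shape exist (there are $f^{(n-1,1)}=n-1$ of shape $(n-1,1)$), and no shape used is $(n)$ or $(1^n)$, so no monotone element appears.

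The real content is disjointness, and this is where Lemma~\ref{lm:countper} and Lemma~\ref{lm:bounddes} come in. Distinct Knuth classes are automatically disjoint, so one only has to separate the Proposition~\ref{prop:symmsizen} copies from each other and from the Knuth classes used. For the former: by Lemma~\ref{lm:countper} there are at least $n-1$ permutations with descent set exactly $[n-1]\sm\{i-1,i\}$ for each $i$, which exceeds the number of copies needed, so the copies can be chosen with disjoint underlying permutations (across the index $i$ the permutations already have distinct descent sets). For the latter: such a permutation has at least $n-3\ge 3$ descents, whereas by Lemma~\ref{lm:bounddes} every permutation in a Knuth class of shape $\lambda$ has at most $n-\lambda_1$ descents --- at most $1$ for $(n-1,1)$ and at most $2$ for $(n-2,1,1)$ --- so none of those Knuth classes can contain a permutation used in a Proposition~\ref{prop:symmsizen} copy.

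I expect the main obstacle to be precisely this bookkeeping: one must simultaneously produce a numerical decomposition of $p$ with all parts in the allowed range, verify that the shapes involved avoid the monotone classes, and confirm that the Proposition~\ref{prop:symmsizen} pieces remain available and mutually disjoint after the Knuth classes are deleted. All of this is controlled by the single inequality $p<(n-1)(n-3)$, which keeps every part count below $n-1$ and hence within the supply guaranteed by Lemma~\ref{lm:countper}.
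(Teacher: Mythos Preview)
Your proposal is correct and matches the paper's proof essentially line for line: the same decomposition $p=(q-r)n+r(n-1)$ in case~(1), the same choice of a single Knuth class of shape $(n-2,2)$ (resp.\ $(n-2,1,1)$) plus $c$ Knuth classes of shape $(n-1,1)$ (resp.\ $c$ Proposition~\ref{prop:symmsizen} sets) in cases~(2) and~(3), and the same disjointness argument via Lemmas~\ref{lm:countper} and~\ref{lm:bounddes}. The only cosmetic difference is that in case~(1) the bound $q-r\le q<n-3$ comes directly from the hypothesis $0\le r\le q<n-3$ rather than from $p<(n-1)(n-3)$, but this does not affect the argument.
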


For the ``only if'' direction of \Cref{thm:symsetsmallsize}, given a symmetric set $S$, by considering the associated set system $A(S) := (S, (A_1, \dots, A_{n-1}))$ (recall \Cref{def:symm2harset}), the problem roughly translates to the problem of classifying the possible sizes of $|U|$ if $H = (U, (A_1, \dots, A_m))$ is a complete and reduced harmonic set system and $|U| < m(m-2)$ (note that $m = n-1$ is the relevant case.) This is done through a series of propositions and lemmas. 
\begin{prop}\label{prop:6nonconsect}
    Let $H = (U, (A_1, \dots, A_m))$ be a harmonic set system such that $|U| < m(m-2)$ and $m\ge 51.$ Then $H_{\{1,3,5\}, \{7,9,11\}} = \varnothing.$
    \begin{proof}
        Suppose $a\in H_{\{1,3,5\}, \{7,9,11\}}.$ Let $T$ be the set of odd indices $i$ such that $a\in A_i$, and let $O$ be the set of all odd indices. Then $a\in H_{T, O\sm T}.$ Then for any bijection $\sigma\colon O\to O$ we have $(T, O\sm T)\sim (\sigma(T), \sigma(O\sm T))$ (recall Definition \ref{def:equivpairset}.) Thus, for any $T'\sbs O$ such that $|T'| =|T|$ we have $(T, O\sm T)\sim (T', O\sm T'),$ so by Lemma \ref{lm:doublehar} we have $|H_{T', O\sm T'}| = |H_{T, O\sm T}|\ge 1.$

        Now for any $T_1, T_2\sbs O$ with $|T_1|  = |T_2| = |T|$ and $T_1\ne T_2$, we claim that $H_{T_1, O\sm T_1}\cap H_{T_2, O\sm T_2} = \varnothing$. Indeed, since $T_1\ne T_2$ but $|T_1| = |T_2|$, there is $t\in T_1\sm T_2$, and so $t\in O\sm T_2$. Then 
        \begin{equation}
            H_{T_1, O\sm T_1}\cap H_{T_2, O\sm T_2} \sbs A_{t}\cap \ove{A_{t}} = \varnothing.
        \end{equation}
        Thus, we have 
        \begin{equation}
            m(m-2) > |U| \ge \sum_{\substack{T'\sbs O\\|T'| = |T|}}|H_{T', O\sm T'}|\ge \sum_{\substack{T'\sbs O\\|T'| = |T|}}1 = \binom{|O|}{|T|}\ge \binom{\ceil*{\frac m2}}3\label{eq:halfchoose3}
        \end{equation}
        where the last inequality follows from the fact that $3\le |T|\le |O|-3$ and that $|O| = \ceil*{\frac m2}.$ One can verify that \eqref{eq:halfchoose3} fails when $m\ge 51.$
    \end{proof}
\end{prop}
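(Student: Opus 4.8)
The plan is to argue by contradiction. Suppose there is some $a \in H_{\{1,3,5\},\{7,9,11\}}$, so $a$ lies in $A_1\cap A_3\cap A_5$ and in none of $A_7, A_9, A_{11}$. The key structural observation is that the indices $1,3,5,7,9,11$ are pairwise non-consecutive, so if we restrict attention to the odd-indexed sets among $A_1,\dots,A_m$, every ``block'' in the sense of Definition~\ref{def:equivpairset} is a singleton; consequently any bijection of $[m]$ that permutes the odd indices among themselves carries an equivalence class of index pairs to itself. To exploit this, let $O\sbs[m]$ be the set of odd indices (so $|O| = \ceil*{\frac m2}$) and set $T := \{i\in O : a\in A_i\}$. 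Then $\{1,3,5\}\sbs T$ and $\{7,9,11\}\cap T = \varnothing$, hence $3\le |T|\le |O|-3$, and $a$ witnesses that $H_{T,\,O\sm T}\neq\varnothing$.

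Next I would transfer this nonemptiness to every subset of $O$ of the same size. Given any $T'\sbs O$ with $|T'| = |T|$, pick a bijection $\sigma\colon[m]\to[m]$ fixing the even indices and carrying $T$ onto $T'$ inside $O$; since all the relevant indices are pairwise non-consecutive, $\sigma$ realizes $(T, O\sm T)\sim(T', O\sm T')$, and Lemma~\ref{lm:doublehar} yields $|H_{T',\,O\sm T'}| = |H_{T,\,O\sm T}|\ge 1$. Moreover, for distinct $T_1,T_2\sbs O$ of the same size there is some $t\in T_1\sm T_2$, whence $H_{T_1,\,O\sm T_1}\sbs A_t$ while $H_{T_2,\,O\sm T_2}\sbs \ove{A_t}$, so these sets are pairwise disjoint subsets of $U$. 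Summing over all admissible $T'$ gives $|U|\ge \binom{|O|}{|T|}\ge \binom{\ceil*{\frac m2}}{3}$, where the last step uses that $\binom{|O|}{k}$, as a function of $k$, is minimized over the range $3\le k\le |O|-3$ at its endpoints $k=3$ and $k=|O|-3$.

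Finally I would close the argument with an elementary inequality: $\binom{\ceil*{\frac m2}}{3}\ge m(m-2)$ for all $m\ge 51$ (the boundary value $\binom{26}{3} = 2600 > 2499 = 51\cdot 49$ is a convenient sanity check, and the left side is cubic in $m$ whereas the right is quadratic), which contradicts the hypothesis $|U| < m(m-2)$. The only real subtlety, and the step I expect to be the crux, is the first one: recognizing that harmonicity, once restricted to a collection of pairwise non-consecutive indices, upgrades from mere invariance under ``same run decomposition'' to full symmetry under arbitrary permutations of those indices — precisely what the pair-set reformulation in Lemma~\ref{lm:doublehar} makes available. Everything after that is a disjointness/pigeonhole count together with the numerical estimate, and the constant $51$ enters only to make that final inequality hold.
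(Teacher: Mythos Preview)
Your proposal is correct and follows essentially the same route as the paper's proof: define $T$ as the odd indices $i$ with $a\in A_i$, use the pairwise non-consecutiveness of $O$ together with Lemma~\ref{lm:doublehar} to show all $H_{T',\,O\sm T'}$ with $|T'|=|T|$ are nonempty, observe their pairwise disjointness, and obtain the lower bound $\binom{\lceil m/2\rceil}{3}$ contradicting $|U|<m(m-2)$ for $m\ge 51$. Your construction of $\sigma$ (fixing the even indices) is in fact a bit more explicit than the paper's, which only writes $\sigma\colon O\to O$.
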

\begin{defn}
    A set or tuple of integers is said to be nonconsecutive if no two distinct elements differ by exactly $1$.
\end{defn}
\begin{prop}\label{prop:5to3}
    A set of $5$ integers contains a nonconsecutive subset of size $3$.
    \begin{proof}
        Order the $5$ integers in ascending order and take the $1$st, $3$rd and $5$th elements.
    \end{proof}
\end{prop}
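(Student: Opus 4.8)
The plan is to argue directly by sorting. Write the five integers in increasing order as $b_1 < b_2 < b_3 < b_4 < b_5$, and take $T := \{b_1, b_3, b_5\}$. I would then check that $T$ is nonconsecutive: since $b_1 < b_2 < b_3$ are integers we have $b_3 \geq b_1 + 2$; similarly $b_5 \geq b_3 + 2$; and combining these, $b_5 \geq b_1 + 4$. Hence no two distinct elements of $T$ differ by exactly $1$, so $T$ is a nonconsecutive subset of size $3$.

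The only idea needed is that skipping an index in a sorted list of distinct integers forces the gap to be at least $2$, because there is a strictly intermediate integer in between. There is no genuine obstacle here; the one point to be careful about is that the definition of nonconsecutive requires checking \emph{every} pair of distinct elements of $T$, not merely the two "adjacent" pairs $\{b_1,b_3\}$ and $\{b_3,b_5\}$ — but the pair $\{b_1,b_5\}$ is handled immediately by $b_5 - b_1 \geq 4 > 1$.

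As an alternative framing (which I would probably not use, as it is heavier), one could build the graph on the five integers with an edge between $x$ and $y$ whenever $|x-y|=1$; this graph is an induced subgraph of the path on $\Z$, hence a disjoint union of paths on $5$ vertices in total, and such a graph has independence number at least $\lceil 5/2 \rceil = 3$, yielding a nonconsecutive subset of size $3$. The sorting argument above is cleaner, so I would present that.
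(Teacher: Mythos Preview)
Your proof is correct and takes essentially the same approach as the paper: sort the five integers and select the 1st, 3rd, and 5th. The paper's version simply omits the verification that these three are nonconsecutive, which you spell out explicitly.
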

\begin{prop}\label{prop:3nonconsec}
    Let $m\ge 14$ and let $T\sbs [m]$. Suppose that both $T$ and $[m]\sm T$ each contain $3$ nonconsecutive integers.  Then one can find $3$ elements from $T$ and $3$ elements from $[m]\sm T$ such that all $6$ of them are nonconsecutive.
    \begin{proof}
        Consider the graph where the vertices are nonconsecutive subsets of size $6$ of $[m],$ and there is an edge between $A$ and $B$ if $B = A\sm \{a\}\cup \{b\}$ for some $a, b$---that is, if one can be obtained from another by changing only one element. We claim this graph is connected. Indeed, given $A = \{a_1 < \dots < a_6\}$, define $A_i := \{2j-1\mid j \le i\}\cup \{a_j\mid j > i\}.$ Then $A_0 = A$ and there is an edge between $A_i$ and $A_{i+1}$ and $A_6 = \{1,3,5,7,9,11\}.$ So any vertex is connected to $\{1,3,5,7,9,11\}$ which shows that the graph is connected.

        Now note that any nonconsecutive subset $A$ of size $3$ of $[m]$ can be extended to a nonconsecutive subset of size $6$ of $[m]$. Indeed, the set \begin{equation}[m]\sm\pa{A + \{-1,0,1\}}\end{equation}
        has at least $14-3\cdot3 = 5$ elements and thus has a nonconsecutive subset $B$ of size $3$ by Proposition \ref{prop:5to3} and one can take $A\cup B$.
         
        Now label each vertex of the above graph by the size of intersection with $T$.  Since $T$ has a nonconsecutive subset of size $3$ and we can extend this to a nonconsecutive subset of size $6$, there is a vertex with color at least $3$. Since $[m]\sm T$ has a nonconsecutive subset of size $3$ and we can extend this to a nonconsecutive subset of size $6$, there is a vertex with color at most $3$. There is a path between these two vertex, and along each edge the label change by at most $1$, so there is a vertex with label exactly $3$ as desired.
    \end{proof}
\end{prop}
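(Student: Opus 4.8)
The plan is to run a discrete intermediate-value argument on the collection of nonconsecutive $6$-subsets of $[m]$. Concretely, I would let $G$ be the graph whose vertices are the nonconsecutive $6$-element subsets of $[m]$, with an edge joining $A$ and $B$ whenever $B = A\setminus\{a\}\cup\{b\}$ for some $a,b$ (i.e.\ when they differ in exactly one element), and I would attach to each vertex $A$ the label $\ell(A) := |A\cap T| \in \{0,1,\dots,6\}$. Two facts drive the argument: \emph{(a)} $G$ is connected, and \emph{(b)} $|\ell(A)-\ell(B)|\le 1$ along every edge $AB$. Fact (b) is immediate, since $\ell(B) = \ell(A) - [a\in T] + [b\in T]$. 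Granting (a) and (b), it then suffices to exhibit one vertex with label $\ge 3$ and one vertex with label $\le 3$: any path in $G$ between them must pass through a vertex of label exactly $3$, and such a vertex is precisely a nonconsecutive $6$-set partitioned $3$--$3$ between $T$ and $[m]\setminus T$, which is what we want.

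For (a), I would connect an arbitrary vertex $A = \{a_1 < \cdots < a_6\}$ to the fixed vertex $\{1,3,5,7,9,11\}$ (which lies in $G$ because $m \ge 11$). Let $A_i$ be the set obtained from $A$ by replacing $a_1,\dots,a_i$ with $1,3,\dots,2i-1$, so $A_0 = A$ and $A_6 = \{1,3,5,7,9,11\}$. Nonconsecutiveness of $A$ gives $a_j \ge 2j-1$ for all $j$ (by induction, $a_j \ge a_{j-1}+2$), so the smallest original coordinate still present in $A_i$, namely $a_{i+1}$, satisfies $a_{i+1} \ge a_i + 2 \ge 2i+1$; hence the "new" block $\{1,3,\dots,2i-1\}$ and the "old" block $\{a_{i+1},\dots,a_6\}$ are disjoint and separated by a gap of at least $2$, so $A_i$ is a nonconsecutive $6$-set, and consecutive members $A_{i-1},A_i$ either coincide or form an edge of $G$. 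This produces a walk from $A$ to $\{1,3,5,7,9,11\}$, establishing connectivity.

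For the two anchor vertices I would use an extension lemma: any nonconsecutive $3$-subset $C$ of $[m]$ can be enlarged to a nonconsecutive $6$-subset. Indeed $[m]\setminus(C + \{-1,0,1\})$ has at least $m - 9 \ge 5$ elements, so by Proposition~\ref{prop:5to3} it contains a nonconsecutive $3$-subset $B$, and $C\cup B$ is the desired $6$-set. Applying this to a nonconsecutive $3$-subset of $T$ yields a vertex with $\ell\ge 3$; applying it to a nonconsecutive $3$-subset of $[m]\setminus T$ yields a vertex $A'$ with $|A'\setminus T|\ge 3$, i.e.\ $\ell(A')\le 3$. Combining with (a) and (b) finishes the proof. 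The step to be most careful about is this extension step: the hypothesis $m \ge 14$ is used exactly to guarantee $m - 9 \ge 5$ so that Proposition~\ref{prop:5to3} applies. The only other place needing attention is the chain of inequalities ($a_j \ge 2j-1$, hence $a_{i+1}\ge 2i+1$) that keeps every $A_i$ nonconsecutive in the connectivity argument; both are routine once set up correctly.
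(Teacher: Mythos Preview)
Your proposal is correct and follows essentially the same approach as the paper's proof: the same graph on nonconsecutive $6$-subsets, the same connectivity argument via the chain $A_i$, the same extension step using $m\ge 14$ and Proposition~\ref{prop:5to3}, and the same discrete intermediate-value conclusion. If anything, you are slightly more careful than the paper in verifying that each $A_i$ is actually a nonconsecutive $6$-set (via $a_j\ge 2j-1$) and in noting that $A_{i-1}$ and $A_i$ may coincide rather than form an edge.
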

\begin{prop}\label{prop:31or30}
    Let $H = (U, (A_1, \dots, A_m))$ be a harmonic set system such that $|U| < m(m-2)$ and $m\ge 51.$ Then for any two nonconsecutive sets $I, J$ of size $3$, we have $H_{I, J} = \varnothing.$
    \begin{proof}
        If $I\cap J \ne\varnothing$ then the conclusion is trivial. Otherwise, suppose $a\in H_{I, J}$. Let $T$ be the set of indices $i$ such that $a\in A_i$. Then $I\sbs T$ while $J\sbs [m]\sm T$, by Proposition \ref{prop:3nonconsec} there is a nonconsecutive $S$ of size $6$ such that $|S\cap T| = 3.$ Then we have $a\in H_{S\cap T, S\sm T}.$ Since the elements of $S$ are nonconsecutive, we have $(S\cap T, S\sm T)\sim (\{1,3,5\}, \{7,9,11\}).$ Thus, $H_{\{1,3,5\}, \{7,9,11\}}\ne\varnothing$ by Lemma \ref{lm:doublehar}, which contradicts Proposition \ref{prop:6nonconsect}.
    \end{proof}
\end{prop}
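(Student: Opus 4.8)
The plan is to argue by contradiction, pushing any hypothetical element of $H_{I,J}$ down to an element of $H_{\{1,3,5\},\{7,9,11\}}$, which Proposition~\ref{prop:6nonconsect} forbids. First I would dispose of the degenerate case: if $I\cap J\ne\varnothing$, choose $i\in I\cap J$ and observe $H_{I,J}\sbs A_i\cap\ove{A_i}=\varnothing$; so from now on $I$ and $J$ may be assumed to be disjoint nonconsecutive triples.

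Suppose toward a contradiction that $a\in H_{I,J}$, and set $T:=\{i\in[m]\mid a\in A_i\}$. By the definition of $H_{I,J}$ we have $I\sbs T$ and $J\sbs[m]\sm T$, so $T$ contains the nonconsecutive triple $I$ and $[m]\sm T$ contains the nonconsecutive triple $J$. Since $m\ge 51\ge 14$, Proposition~\ref{prop:3nonconsec} supplies a nonconsecutive $6$-element set $S$ with $|S\cap T|=3$. Put $S_1:=S\cap T$ and $S_2:=S\sm T$, so $|S_1|=|S_2|=3$; since $S_1\sbs T$ and $S_2\cap T=\varnothing$, the element $a$ lies in $A_i$ for every $i\in S_1$ and in $\ove{A_i}$ for every $i\in S_2$, i.e.\ $a\in H_{S_1,S_2}$.

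The next step is to check that $(S_1,S_2)\sim(\{1,3,5\},\{7,9,11\})$ in the sense of Definition~\ref{def:equivpairset}. Because $S$ is nonconsecutive, no two elements of $S_1\cup S_2$ differ by $1$, and $\{1,3,5,7,9,11\}$ is likewise nonconsecutive; hence any bijection $[m]\to[m]$ carrying $S_1$ onto $\{1,3,5\}$ and $S_2$ onto $\{7,9,11\}$ vacuously satisfies the distance-one condition and therefore witnesses the equivalence. Then Lemma~\ref{lm:doublehar} gives $|H_{\{1,3,5\},\{7,9,11\}}|=|H_{S_1,S_2}|\ge 1$, contradicting Proposition~\ref{prop:6nonconsect}, so $H_{I,J}=\varnothing$.

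I do not expect a real obstacle here: all the hard combinatorics is already packaged into Propositions~\ref{prop:6nonconsect} and~\ref{prop:3nonconsec}, and the numerical hypotheses $|U|<m(m-2)$ and $m\ge 51$ are precisely those needed to invoke Proposition~\ref{prop:6nonconsect}. The only point deserving a moment's care is the pair-equivalence for two nonconsecutive configurations, which is immediate since the sole nontrivial requirement in Definition~\ref{def:equivpairset} concerns pairs of indices at distance $1$, of which there are none in a nonconsecutive set.
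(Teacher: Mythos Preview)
Your proof is correct and follows essentially the same route as the paper's: dispose of the $I\cap J\ne\varnothing$ case, pass to the index set $T$ of a hypothetical witness $a$, invoke Proposition~\ref{prop:3nonconsec} to find a nonconsecutive $6$-set balanced against $T$, and reduce to the specific pair $(\{1,3,5\},\{7,9,11\})$ via Lemma~\ref{lm:doublehar}. Your write-up is slightly more explicit about why the pair-equivalence holds (the distance-one condition is vacuous on nonconsecutive sets), but the argument is the same.
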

\begin{lm}\label{lm:split}
    Let $H = (U, (A_1, \dots, A_m))$ be a harmonic set system such that $|U| < m(m-2)$ and $m\ge 51.$ Then one can partition $U = U^{(1)}\sqcup U^{(2)}$ such that the induced systems $H^{(i)} := (U^{(i)}, (A_1\cap U^{(i)}, \dots, A_m\cap U^{(i)}))$ are harmonic and satisfies $H^{(1)}_{I} = H^{(2)}_{\varnothing, I} =\varnothing$ for any nonconsecutive $I$ of size $3.$ 
    \begin{proof}
        Let $\Hc$ denote the collection of all nonconsecutive subsets of indices of size $3$. Let  
        \begin{equation}
            U^{(1)} := \bigcup\limits_{I\in \Hc}H_{\varnothing,I}
        \end{equation}
        and 
        \begin{equation}
            U^{(2)} := \bigcup\limits_{I\in \Hc}H_{I}.
        \end{equation}
        Then for any $J\in \Hc$ we have, by \Cref{prop:31or30},
        \begin{equation}
            H^{(1)}_{J} := \bigcap_{j\in J}\pa{A_j\cap U^{(1)}} = H_J\cap U^{(1)} = H_J\cap \bigcup\limits_{I\in \Hc}H_{\varnothing,I} = \bigcup\limits_{I\in \Hc}\pa{H_J\cap H_{\varnothing,I}} = \bigcup\limits_{I\in \Hc}H_{J,I} = \varnothing
        \end{equation}
        and 
        \begin{equation}
            H^{(2)}_{\varnothing,J} := \bigcap_{j\in J}\pa{\ove{A_2}\cap U^{(2)}} = H_{\varnothing,J}\cap U^{(1)} = H_{\varnothing,J}\cap \bigcup\limits_{I\in \Hc}H_{I} = \bigcup\limits_{I\in \Hc}\pa{H_{\varnothing, J}\cap H_{I}} = \bigcup\limits_{I\in \Hc}H_{I,J} = \varnothing.
        \end{equation}
        We also have 
        \begin{equation}
            U^{(1)}\cap U^{(2)} = \bigcup\limits_{J\in \Hc}H_{\varnothing,J}\cap \bigcup\limits_{I\in \Hc}H_{I} = \bigcup_{I, J\in \Hc}H_{\varnothing,J}\cap H_{I} = \bigcup_{I, J\in \Hc}H_{I,J} = \varnothing,
        \end{equation}
        and we now show $U_1\cup U_2 = U.$ Fix $u\in U$, then let $T$ be the subset of indices $i$ such that $u\in A_i$. Then if $|T|\ge 5$, choose $A\sbs T$ such that $A\in \Hc$ (Proposition \ref{prop:5to3}). Then $u\in H_A\sbs U^{(1)}.$ Otherwise, we have $|[m]\sm T|\ge 5$, choose $A\sbs [m]\sm T$ such that $A\in \Hc$. We have $u\in H_{\varnothing,A}\sbs U^{(2)}.$

        It remains to show that $H^{(1)}, H^{(2)}$ are harmonic. We will prove this for $H^{(1)}$ as the proof for $H^{(2)}$ is analogous. Since $H^{(1)}_I =\varnothing$ for any nonconsecutive $I$ of size $3$, it suffices to show that $\abs*{H^{(1)}_I} =\abs*{H^{(1)}_J}$ for  nonempty $I\sim J$ and $I, J$ not containing $3$ nonconsecutive integers. Fix such a pair $I$ and $J$. Note that 
        \begin{equation}
            [m]\sm\pa{ (I\cup J)+\{-1,0,1\}}
        \end{equation}
        has at least $51-6\cdot 3\ge 9$ elements, and thus has $5$ nonconsecutive integers $T$. We have
        \begin{equation}
            \abs*{H_I^{(1)}} = \sum_{K\sbs T}\abs*{H_{I\cup K, T\sm K}^{(1)}}.
        \end{equation}
        Note that for each $K$, either $I\cup K$ or $T\sm K$ has $3$ nonconsecutive integers. If $I\cup K$ has $3$ nonconsecutive integers then $H_{I\cup K, T\sm K}^{(1)} = \varnothing.$ If $T\sm K$ has $3$ nonconsecutive integers then $\abs*{H_{I\cup K, T\sm K}^{(2)}} = 0$ so $\abs*{H_{I\cup K, T\sm K}^{(1)}} = \abs*{H_{I\cup K, T\sm K}}.$ Thus, letting $\Hc$ denotes the collection of $K\sbs T$ such that $T\sm K$ has $3$ nonconsecutive integers, by using a similar argument with $J$ we have 
        \begin{equation}
            \abs*{H_I^{(1)}} = \sum_{K\in \Hc}\abs*{H_{I\cup K, T\sm K}} = \sum_{K\in \Hc}\abs*{H_{J\cup K, T\sm K}} = \abs*{H_J^{(1)}},
        \end{equation}
        as desired.
    \end{proof}
\end{lm}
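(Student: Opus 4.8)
The plan is to take the partition that \Cref{prop:31or30} essentially forces. Let $\Hc$ be the collection of nonconsecutive triples in $[m]$, and set $U^{(1)} := \bigcup_{I\in\Hc}H_{\varnothing, I}$ and $U^{(2)} := \bigcup_{I\in\Hc}H_I$; concretely, $u\in U^{(1)}$ exactly when $\{i: u\notin A_i\}$ contains a nonconsecutive triple, and $u\in U^{(2)}$ exactly when $\{i: u\in A_i\}$ does. First I would check this is genuinely a partition of $U$. Disjointness is immediate from \Cref{prop:31or30}: an element of $U^{(1)}\cap U^{(2)}$ would lie in $H_{I,J}$ for some $I, J\in\Hc$, which is empty. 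For covering, if $\{i: u\in A_i\}$ has at least $5$ elements it contains a nonconsecutive triple by \Cref{prop:5to3}, so $u\in U^{(2)}$; otherwise $\{i: u\notin A_i\}$ has at least $m-4\ge 5$ elements and the same argument gives $u\in U^{(1)}$. The two emptiness conditions then follow at once: $H_I\sbs U^{(2)}$ and $H_{\varnothing,I}\sbs U^{(1)}$ for $I\in\Hc$ by construction, so $H^{(1)}_I = H_I\cap U^{(1)} = \varnothing$ and $H^{(2)}_{\varnothing, I} = H_{\varnothing,I}\cap U^{(2)} = \varnothing$ using $U^{(1)}\cap U^{(2)} = \varnothing$.

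The heart of the argument is that $H^{(1)}$ is harmonic; harmonicity of $H^{(2)}$ should come for free by running the same conclusion on $\overline H$, which is harmonic by \Cref{cor:comharset}: its analogous ``$U^{(1)}$-part'' is the induced system on $\bigcup_{I\in\Hc}(\overline H)_{\varnothing, I} = \bigcup_{I\in\Hc}H_I = U^{(2)}$, which is precisely the complement of $H^{(2)}$, so \Cref{cor:comharset} applies once more. To show $|H^{(1)}_I| = |H^{(1)}_J|$ for $I\sim J$: if $I$ contains a nonconsecutive triple then so does $J$ (this depends only on the common run decomposition), and both sides vanish by the emptiness condition. So assume neither $I$ nor $J$ contains a nonconsecutive triple; in particular $|I| = |J|\le 4$. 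I would then pick a \emph{buffer} set $T$ of $5$ mutually nonconsecutive indices avoiding $(I\cup J)+\{-1,0,1\}$: this is possible since that neighborhood has at most $3\cdot 8 = 24$ elements, leaving at least $m-24\ge 27\ge 9$ indices, and any set of at least $9$ integers contains $5$ mutually nonconsecutive ones. Because $T$ is non-adjacent to $I\cup J$, for every $K\sbs T$ the sets $I\cup K$ and $J\cup K$ have the same run decomposition, and the bijection of $[m]$ realizing $I\sim J$ can be chosen to fix $T$ pointwise, so that $(I\cup K, T\sm K)\sim(J\cup K, T\sm K)$.

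Splitting $H^{(1)}_I$ according to which of the sets $A_t$ ($t\in T$) an element belongs to gives
\begin{equation}
    |H^{(1)}_I| = \sum_{K\sbs T}\bigl|H^{(1)}_{I\cup K,\, T\sm K}\bigr|,
\end{equation}
and I claim each summand is either $0$ or equals $\bigl|H_{I\cup K,\, T\sm K}\bigr|$. If $|K|\ge 3$, pick a nonconsecutive triple $K_0\sbs K$; then $H^{(1)}_{I\cup K,\, T\sm K}\sbs H^{(1)}_{I\cup K}\sbs H^{(1)}_{K_0} = \varnothing$ by the emptiness condition. If $|K|\le 2$, then $T\sm K$ has at least $3$ elements, all mutually nonconsecutive, so it contains a nonconsecutive triple $K_0$; hence $H_{I\cup K,\, T\sm K}\sbs H_{\varnothing, K_0}\sbs U^{(1)}$, and therefore $H^{(1)}_{I\cup K,\, T\sm K} = H_{I\cup K,\, T\sm K}$. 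Thus $|H^{(1)}_I| = \sum_{|K|\le 2}\bigl|H_{I\cup K,\, T\sm K}\bigr|$, and by \Cref{lm:doublehar} applied to $(I\cup K, T\sm K)\sim(J\cup K, T\sm K)$ each term equals $\bigl|H_{J\cup K,\, T\sm K}\bigr|$; running the same identity in reverse for $J$ yields $|H^{(1)}_I| = |H^{(1)}_J|$.

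I expect the main obstacle to be exactly this harmonicity step. Concretely, one must check that $m\ge 51$ really does leave room for the buffer $T$, and --- the subtler point --- that the splitting sum genuinely collapses onto the \emph{global} quantities $\bigl|H_{I\cup K,\, T\sm K}\bigr|$, since only these are controlled by harmonicity of $H$ via \Cref{lm:doublehar}; this rests on the dichotomy that for every $K\sbs T$ either $I\cup K$ carries a nonconsecutive triple (forcing the $H^{(1)}$-intersection to be empty) or $T\sm K$ does (forcing the relevant intersection to live inside $U^{(1)}$), which in turn is why $|T|=5$ is the right choice. One also needs the small observation that the bijection witnessing $I\sim J$ can be arranged to fix $T$, which is precisely where the non-adjacency of $T$ to $I\cup J$ is used. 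Everything else --- the partition being well defined, the emptiness conditions, and the deduction for $H^{(2)}$ --- should be routine bookkeeping with run decompositions and the definitions of $U^{(1)}$ and $U^{(2)}$.
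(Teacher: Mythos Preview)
Your proposal is correct and follows essentially the same route as the paper: the same definition of $U^{(1)}$ and $U^{(2)}$, the same use of \Cref{prop:31or30} for disjointness and the emptiness conditions, the same covering argument via \Cref{prop:5to3}, and the same buffer set $T$ of five nonconsecutive indices to reduce $|H^{(1)}_I|$ to global quantities handled by \Cref{lm:doublehar}. Your write-up is in fact more careful on two points the paper glosses over: you explicitly justify why the bijection witnessing $I\sim J$ can be taken to fix $T$ pointwise (using non-adjacency of $T$ to $I\cup J$), and you handle $H^{(2)}$ cleanly via \Cref{cor:comharset} rather than by an ``analogous'' argument.
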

\begin{lm}\label{lm:calculations}
    Let $H = (U, (A_1, \dots, A_m))$ be a complete harmonic set system such that $|U|< m(m-2)$ and $m\ge 51$ and $H_I = \varnothing$ for all nonconsecutive $I$ of size $3$. Then $|U|$ is a nonnegative integer linear combination of $m, m+1, \binom m2, \frac{(m+1)(m-2)}{2}.$
    \begin{proof}
       
Let $H_\lambda$ denote $|H_I|$ for any set $I$ with run decomposition $\lambda.$ Note that since $H_I = \varnothing$ for all nonconsecutive $I$ of size $3$, we have $H_\lambda = 0$ unless \begin{equation}\lambda\in\{\varnothing, (1), (2), (1,1), (3), (2,1), (4), (2,2)\}.\end{equation}
Let $\Hc_\lambda$ denote the collection of all subsets of indices that have run decomposition $\lambda.$
For any $\lambda$ we have   
\begin{equation}
    \abs*{U} \ge \abs*{\bigcup_{I\in \Hc_{\lambda}}H_{I}} = \sum_{\varnothing \subsetneq P\sbs \Hc_\lambda}(-1)^{|P|-1}H_{\bigcup\limits_{I\in P}I}. 
    \label{eq:hlambda}
\end{equation}

Applying \eqref{eq:hlambda} with $\lambda = (2, 1)$ and $ \lambda = (1,1)$ we have
\begin{align}
    |U|&\ge (m-2)(m-3)H_{(2,1)}-3\binom{m-3}2H_{(2,2)}-(m-3)H_{(4)},\label{eq:dbound}\\
    |U| &\ge  \binom{m-1}2H_{(1,1)}-(m-2)(m-3)H_{(2,1)}+ \binom{m-3}2H_{(2,2)}\label{eq:fbound},
\end{align}
where we skipped some elementary counting of how many times certain sets appear in the right-hand side of \eqref{eq:hlambda}. For the rest of this proof, we will omit similar elementary computations.

By counting the number of elements that appears in $4$ sets we have  
\begin{equation}
    \abs{U} \ge (m-3)H_{(4)} + \binom{m-3}2H_{(2,2)}.\label{eq:fourbound}
\end{equation}

For $I$ with run decomposition $\lambda$ we have 
\begin{align}
    H_\lambda &= \abs*{H_I}  = H_{I, [m]\sm I} +\abs*{\bigcup_{j\in [m]\sm I}H_{I\cup\{j\}} } \ge  \sum_{\varnothing \subsetneq P\sbs [m]\sm I} (-1)^{|P|-1}\abs*{H_{I\cup P}}, \label{eq:ineqi}
\end{align}
and if $I = \varnothing$ then equality holds by completeness.

Applying \eqref{eq:ineqi} with $I = \{1,2,4\}, I = \{1,2,5\}, I = \{8,9\}, I = \{5,9\}, I = \{5,8\}, I = \{1\}, I = \{3\}, I = \varnothing$ respectively we get 
\begin{align}
    H_{(2,1)}&\ge H_{(2,2)}+H_{(4)},\label{eq:dab}\\
    H_{(2,1)}&\ge 2H_{(2,2)},\label{eq:da}\\
    H_{(2)}&\ge 2H_{(3)} + (m-4)H_{(2,1)} - 3H_{(4)} -(m-6)H_{(2,2)},\label{eq:evmid}\\
    H_{(1,1)}&\ge 4H_{(2,1)}-4H_{(2,2)},\label{eq:fda}\\
    H_{(1,1)}&\ge 4H_{(2,1)}-3H_{(2,2)}-H_{(4)},\label{eq:fab}\\
    H_{(1)}&\ge H_{(2)} + (m-2)H_{(1,1)} - H_{(3)} - (2m-6)H_{(2,1)}+H_{(4)}+(m-4)H_{(2,2)},\label{eq:gfront}\\
    H_{(1)}&\ge 2H_{(2)} + (m-3)H_{(1,1)} - 3H_{(3)} - (3m-13)H_{(2,1)}+3H_{(4)}+(2m-11)H_{(2,2)}\label{eq:gthree},\\
    \abs{U} &= mH_{(1)} - \pa{(m-1)H_{(2)} + \binom{m-1}2H_{(1,1)}} \notag\\
    &+ \pa{(m-2)H_{(3)} + (m-2)(m-3)H_{(2,1)}} - \pa{(m-3)H_{(4)} + \binom{m-3}2H_{(2,2)}}.\label{eq:onebound}
\end{align} 
For the rest of the proof, we will use \eqref{eq:dbound}--\eqref{eq:onebound} to show that $|U|$ is of the desired form.
By \eqref{eq:fourbound} 
we have 
\begin{align}
    m(m-2) & > |U|\ge  (m-3)H_{(4)} + \binom{m-3}2H_{(2,2)}, 
\end{align}
so $H_{(2,2)} < 3.$ We now split into cases.
\begin{casesp}
\item $H_{(2,2)} = 0.$
\begin{casesp}
    \item $H_{(2,1)} = 0.$ By \eqref{eq:dab} we have $H_{(4)} = 0.$ By \eqref{eq:evmid} we have $H_{(2)}\ge 2H_{(3)}.$ 
Using \eqref{eq:onebound} we have 
\begin{align}
    |U| &=mH_{(1)} - \pa{(m-1)H_{(2)} + \binom{m-1}2H_{(1,1)}} + (m-2)H_{(3)} \notag\\
    &= mg - (m-1)H_{(2)} - \frac{(m-1)(m-2)}2H_{(1,1)} + (m-2)H_{(3)}\notag\\
    &= m(H_{(1)} - (H_{(2)}+(m-2)H_{(1,1)} - H_{(3)})) + H_{(2)} +\frac{(m+1)(m-2)}2H_{(1,1)} -2H_{(3)}\label{eq:uwrite}\\
    &= m(H_{(1)} - (H_{(2)}+(m-2)H_{(1,1)} - H_{(3)}))  \notag\\
    &+\frac{(m+1)(m-2)}2(H_{(1,1)}-(H_{(2)}-2H_{(3)}))+ \binom{m}2(H_{(2)}-2H_{(3)}),\notag
\end{align}
where $H_{(2)}-2H_{(3)}\ge 0$, and $H_{(1)} - (H_{(2)}+(m-2)H_{(1,1)} - H_{(3)})\ge 0$ by \eqref{eq:gfront}. Thus if $H_{(1,1)}\ge H_{(2)}-2H_{(3)}$ then this is a nonnegative integer linear combination of $m, \frac{(m+1)(m-2)}2, \binom{m}2.$ If $H_{(1,1)} < H_{(2)} - 2H_{(3)}$ then by \eqref{eq:uwrite} we have 
\begin{align}
    |U| 
    &= m(H_{(1)} - (H_{(2)}+(m-2)H_{(1,1)} - H_{(3)})) + H_{(2)} +\frac{(m+1)(m-2)}2H_{(1,1)} -2H_{(3)}\notag\\
    &= m(H_{(1)} - (H_{(2)}+(m-2)H_{(1,1)} - H_{(3)}) - (H_{(2)}-2H_{(3)}-H_{(1,1)})) \notag\\
    &+ (m+1)(H_{(2)}-2H_{(3)}-H_{(1,1)}) +\binom{m}2H_{(1,1)}\label{eq:uwritenew}\\ 
    &= m(H_{(1)} - (2H_{(2)}+(m-3)H_{(1,1)} - 3H_{(3)})) + (m+1)(H_{(2)}-2H_{(3)}-H_{(1,1)}) \notag\\
    &+\binom{m}2H_{(1,1)}\notag 
\end{align}
where $H_{(1)} - (2H_{(2)}+(m-3)H_{(1,1)} - 3H_{(3)})\ge 0$ by \eqref{eq:gthree}.
    \item $H_{(2,1)} = 1.$ By \eqref{eq:fda} we have $H_{(1,1)}\ge 4,$ and by \eqref{eq:fbound} we have 
\begin{align}
    |U|&\ge \binom{m-1}2H_{(1,1)}-(m-2)(m-3)H_{(2,1)}+ \binom{m-3}2H_{(2,2)}\notag\\ 
    &\ge 4\binom{m-1}2 - (m-2)(m-3) = (m+1)(m-2) \ge m(m-2),
\end{align}
contradicting $|U| < m(m-2).$
\item $H_{(2,1)}\ge 2.$ We have 
\begin{align}
    |U|&\ge (m-2)(m-3)H_{(2,1)}-3\binom{m-3}2H_{(2,2)}-(m-3)H_{(4)}\notag\\
    &= (m-2)(m-3)H_{(2,1)} - (m-3)H_{(4)} \\
    &\ge (m-2)(m-3)H_{(2,1)} - (m-3)H_{(2,1)}\ge 2(m-3)^2\ge m(m-2),\notag
\end{align}
which contradicts the assumption $|U| < m(m-2),$ where we used \eqref{eq:dbound} and \eqref{eq:dab}.
\end{casesp}
\item $H_{(2,2)}=1.$
\begin{casesp}
\item $H_{(2,1)}\le 2$. By \eqref{eq:da} we have $H_{(2,1)} = 2$.
\begin{casesp}
\item $H_{(1,1)}\le 4$. By \eqref{eq:fda} we have $H_{(1,1)} = 4$. By \eqref{eq:fab} we have $H_{(4)}\ge 1$ but by \eqref{eq:dab} we have $H_{(4)}\le 1$ so $H_{(4)} = 1.$ Let 
\begin{align}h &:= 2H_{(2)} + (m-3)H_{(1,1)} - 3H_{(3)} - (3m-13)H_{(2,1)}+3H_{(4)}+(2m-11)H_{(2,2)}\notag\\
    &=2H_{(2)} + 4(m-3)-3H_{(3)} - 2(3m-13) + 3 + 2m-11\\
    &=2H_{(2)}-3H_{(3)}+6,\notag
\end{align} 
note that $H_{(1)}\ge h$ by \eqref{eq:gthree}. Then by \eqref{eq:onebound} we have 
\begin{align}
    |U| &= mH_{(1)} - \pa{(m-1)H_{(2)} + \binom{m-1}2H_{(1,1)}} \notag\\
    &+ \pa{(m-2)H_{(3)} + (m-2)(m-3)H_{(2,1)}} - \pa{(m-3)H_{(4)} + \binom{m-3}2H_{(2,2)}}\notag\\
    &= m(H_{(1)}-h) + (m+1)H_{(2)} -(2m+2)H_{(3)} - 4\binom{m-1}2 + 2(m-2)(m-3) \\
    &- \pa{(m-3)+ \binom{m-3}2} + 6m\notag\\
    &=m(H_{(1)}-h)+(m+1)(H_{(2)}-(2H_{(3)}+m-5))+\frac{m(m+1)}2.\notag
\end{align}
Note that $H_{(2)}\ge 2H_{(3)}+m-5$ by \eqref{eq:evmid}. Since $\frac{m(m+1)}2$ is either divisible by $m$ or $m+1$, we are done.
\item $H_{(1,1)}\ge 5.$ By 
\eqref{eq:fbound} we have 
\begin{align}
    |U|&\ge  \binom{m-1}2H_{(1,1)}-(m-2)(m-3)H_{(2,1)}+ \binom{m-3}2H_{(2,2)} \\
    &\ge 5 \binom{m-1}2 - 2(m-2)(m-3)+\binom{m-3}2=m^2-m-1\ge m(m-2),\notag
\end{align}
which contradicts $|U| < m(m-2).$
\end{casesp}
\item $H_{(2,1)}\ge 3.$ By \eqref{eq:fbound} and \eqref{eq:fda} we have 
\begin{align}
    |U| &\ge  \binom{m-1}2H_{(1,1)}-(m-2)(m-3)H_{(2,1)}+ \binom{m-3}2H_{(2,2)}  \notag\\
    &\ge \binom{m-1}2(4H_{(2,1)}-4) - (m-2)(m-3)H_{(2,1)} + \binom{m-3}2\notag\\
    &=H_{(2,1)}(m+1)(m-2)-2(m-1)(m-2)+\frac{(m-3)(m-4)}2\\
    &\ge(m+5)(m-2)+\frac{(m-3)(m-4)}2\ge m(m-2),\notag
\end{align}
which contradicts $|U| < m(m-2).$
\end{casesp}
\item $H_{(2,2)} = 2.$ By 
\eqref{eq:dbound} we have 
\begin{align}
    |U|&\ge (m-2)(m-3)H_{(2,1)}-3\binom{m-3}2H_{(2,2)}-(m-3)H_{(4)} \notag\\
    &= (m-2)(m-3)H_{(2,1)} - 6\binom{m-3}2 - (m-3)H_{(4)} \label{eq:dbcal}\\
    &= (m+4)(m-2) + (m-3)((H_{(2,1)}-4)(m-2)-H_{(4)}).\notag
\end{align}
We claim that $(H_{(2,1)}-4)(m-2)-H_{(4)}\ge -2$. Indeed, by \eqref{eq:da} we have $H_{(2,1)}\ge2 H_{(2,2)}= 4$ so the claim is true if $H_{(4)}\le 2.$ If $H_{(4)} > 2$ then by \eqref{eq:dab} we have \begin{align}&(H_{(2,1)}-4)(m-2)-H_{(4)}\ge (H_{(4)}-2)(m-2)-H_{(4)} \\
&= H_{(4)}(m-3)-2(m-2)\ge 2(m-3)-2(m-2)=-2.\notag\end{align}
Thus, by \eqref{eq:dbcal} we have 
\begin{equation}
    |U|\ge (m+4)(m-2)  -2(m-3)\ge m(m-2),
\end{equation}
contradicting $|U| < m(m-2).$ \qedhere
\end{casesp}
    \end{proof}
\end{lm}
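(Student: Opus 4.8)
The plan is to use harmonicity to collapse the problem to finitely many numbers. Since $H$ is harmonic, $H_\lambda := \abs{H_I}$ depends only on the run decomposition $\lambda$ of $I$, and combining the hypothesis with the observation that any index set whose run decomposition has three or more parts, or a run of length at least $5$, or shape $(3,1)$, contains three pairwise nonconsecutive indices, we get $H_\lambda = 0$ unless $\lambda\in\{\varnothing,(1),(2),(1,1),(3),(2,1),(4),(2,2)\}$. Expanding $\abs{U} = \abs*{\bigcup_i A_i}$ by inclusion--exclusion and grouping the $2^m$ intersection terms according to run decomposition then yields an exact formula for $\abs{U}$ as an integer combination of these eight quantities, with coefficients that are explicit polynomials in $m$ (for instance the coefficient of $H_{(2,1)}$ is $(m-2)(m-3)$, the number of three-element subsets of $[m]$ consisting of an adjacent pair together with an isolated point; the coefficient of $H_{(1,1)}$ is $\binom{m-1}{2}$). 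Applying the same bookkeeping to the relation $H_I = H_{I,[m]\setminus I} + \abs*{\bigcup_{j\notin I}H_{I\cup\{j\}}}$ for small representative sets $I$ of each type --- an equality when $I=\varnothing$ by completeness --- produces a family of inequalities and identities relating the $H_\lambda$, such as $H_{(2,1)}\ge 2H_{(2,2)}$ and $H_{(1,1)}\ge 4H_{(2,1)}-4H_{(2,2)}$.

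With this toolkit I would then run a finite case analysis whose engine is the hypothesis $\abs{U}<m(m-2)$. Counting elements lying in four of the sets $A_i$ gives $\binom{m-3}{2}H_{(2,2)}\le\abs{U}<m(m-2)$, so $H_{(2,2)}\le 2$; analogous crude lower bounds on $\abs{U}$ then pin down or eliminate the remaining small parameters branch by branch. Splitting on the value of $H_{(2,2)}$, then $H_{(2,1)}$, then $H_{(1,1)}$, most branches die because one of the derived inequalities forces $\abs{U}$ past $m(m-2)$; the few that survive are essentially $H_{(2,2)}=H_{(2,1)}=0$ on one side and $H_{(2,2)}=1,\ H_{(2,1)}=2,\ H_{(1,1)}=4,\ H_{(4)}=1$ on the other. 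In each surviving branch I would take the exact formula for $\abs{U}$ and algebraically regroup it into the shape $ma+(m+1)b+\binom{m}{2}c+\tfrac{(m+1)(m-2)}{2}d$, where each of $a,b,c,d$ is a combination of the $H_\lambda$ that the inequalities guarantee to be a nonnegative integer; one uses here that $\tfrac{m(m+1)}{2}$ is itself a multiple of $m$ or of $m+1$ to absorb the stray constant terms.

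The main obstacle is the bookkeeping together with the regrouping: one has to pin down every inclusion--exclusion coefficient exactly (the number of $k$-subsets of $[m]$ of each run-decomposition type, for $k\le 4$), and then, in each surviving case, discover a regrouping that makes all four coefficient bundles visibly nonnegative at once. This is genuinely delicate --- in the case $H_{(2,2)}=H_{(2,1)}=0$ the first natural grouping can leave a coefficient like $H_{(1,1)}-(H_{(2)}-2H_{(3)})$ whose sign is unclear, so one must fall back on a stronger inequality (of the form $H_{(1)}\ge 2H_{(2)}+(m-3)H_{(1,1)}-3H_{(3)}$, coming from the same inclusion--exclusion identity applied to a well-chosen singleton index) and regroup so as to land in the cone generated by $\{m,\,m+1,\,\binom{m}{2}\}$ instead. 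Once the case list is exhausted and every branch is either contradicted or exhibited in the required form, the lemma follows.
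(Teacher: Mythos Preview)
Your proposal is correct and follows essentially the same approach as the paper: the same reduction to eight surviving run-decomposition types, the same inclusion--exclusion identities and inequalities (including the key ones $H_{(2,1)}\ge 2H_{(2,2)}$, $H_{(1,1)}\ge 4H_{(2,1)}-4H_{(2,2)}$, and $H_{(1)}\ge 2H_{(2)}+(m-3)H_{(1,1)}-3H_{(3)}$), the same case split on $H_{(2,2)}\in\{0,1,2\}$ with further branching on $H_{(2,1)}$ and $H_{(1,1)}$, the same two surviving branches, and the same regrouping trick (including the fallback when $H_{(1,1)}<H_{(2)}-2H_{(3)}$). One tiny wording slip: your criterion ``three or more parts, or a run of length $\ge 5$, or shape $(3,1)$'' does not literally cover shapes like $(4,1)$ or $(3,2)$, though these too contain three pairwise nonconsecutive indices, so the conclusion stands.
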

\begin{proof}[Proof of Theorem \ref{thm:symsetsmallsize}]
        By \Cref{lm:symmverysmall} it suffices to show that if $S\sbs S_n$ is a symmetric set without monotone elements and $p = |S|$ then one of (1), (2), (3) holds. Note that $A(S) = (S, (A_1, \dots, A_{n-1}))$ is a complete and reduced harmonic set system. Thus, by Lemma \ref{lm:split} (with $m = n-1$) we can partition $S = S^{(1)}\sqcup S^{(2)}$ such that the induced system by $S^{(1)}$ satisfies the hypothesis of Lemma \ref{lm:calculations} and the complement of the induced system by $S^{(2)}$ satisfies the hypothesis of Lemma \ref{lm:calculations}. Thus, each $|S^{(i)}|$ is a nonnegative integer linear combination of $m, m+1, \binom m2, \frac{(m+1)(m-2)}2,$ and thus $|S|$ is a nonnegative integer linear combination of $m, m+1, \binom m2, \frac{(m+1)(m-2)}2.$ Since $m = n-1$, these are $n, n-1, \binom{n-1}2, \frac{n(n-3)}2.$

        Thus, it suffices to show that if $p$ is a nonnegative integer linear combination of $n, n-1, \binom{n-1}2, \frac{n(n-3)}2$ then one of (1), (2), (3) holds.
        Suppose 
        \begin{equation}
            p = an +b(n-1) + c \binom{n-1}2 + d\frac{n(n-3)}2.
        \end{equation}
        Note that $\binom{n-1}2 \ge \frac{n(n-3)}2 \ge \frac{(n-1)(n-3)}2$ so $c+d\le 1$, and $(n-1)(n-3)> an + b(n-1) \ge (a+b)(n-1)$ so $a +b < n-3$. If $c = d = 0$ then $p = (a+b)n - b$ so (1) holds. 

        Now consider the case $c = 1, d = 0$. If $n$ is even then 
        \begin{equation}
            p = an + b(n-1) + \binom{n-1}2 = an + \pa{b+\frac{n-2}2}(n-1),
        \end{equation}
        so the claim is reduced to the $c = d = 0$ case. If $n$ is odd and $b\ge 1$ then we have 
        \begin{equation}
            p = an + b(n-1) + \binom{n-1}2 = \pa{a+\frac{n-1}2}n + (b-1)(n-1),
        \end{equation}
        and the claim is reduced to the $c = d = 0$ case. If $b = 0$ then (3) holds. 

        Now consider the case $c = 0, d = 1$. If $n$ is odd then 
        \begin{equation}
            p = an + b(n-1) + \frac{n(n-3)}2 = \pa{a+\frac{n-3}2}n + b(n-1),
        \end{equation}
        so the claim is reduced to the $c = d = 0$ case. If $n$ is even and $a\ge 1$ then we have 
        \begin{equation}
            p = an + b(n-1) + \frac{n(n-3)}2 = (a-1)n + (b+\frac{n}2)(n-1),
        \end{equation}
        and the claim is reduced to the $c = d = 0$ case. If $a = 0$ then (2) holds.
\end{proof}

\section{Minimal symmetric and symmetrically avoided sets}\label{sec:minsymmset}
In this section, we prove Theorem \ref{thm:symmk} and Theorem \ref{thm:smallsaset} which classify symmetric sets and symmetrically avoided sets of size at most $k-1$ in $S_k.$ We start by proving some claims about harmonic set systems.
\begin{prop}\label{prop:setequal}
    If $H = (U, (A_1, \dots, A_m))$ is a harmonic set system and there exists $i< j$ such that $A_i = A_j$ then $A_1 = \dots = A_m$ or $(m,i,j)=(3,1,3)$.
    \begin{proof}
        Note that if $m = 1$ then the conclusion trivially holds, and if $m=2$ then we must have $i = 1, j = 2$ so the conclusion also holds. Thus, we can assume $m\ge 3.$

        Note that by harmonicity, all the sets have the same size $k$ for some $k$ (by taking $I = \{u\}$ and $J = \{v\}$ we have $|A_u| = |A_v|$ for any $u, v$.) 

        Note that $|A_i\cap A_j| = k$. We first consider the case $|i-j| = 1$. By harmonicity we have $|A_u\cap A_{u+1}| = k$ for all $u$. Thus, we have $|A_u| = |A_{u+1}| = |A_u\cap A_{u+1}|$ for all $u$, which implies $A_u = A_{u+1}$ for all $u$. Thus, $A_1 = \dots = A_m$.
        
        In the case $|i-j| > 1$, a similar argument to the previous case show $A_u = A_v$ for all $|u-v| > 2$. If $m > 3$ then we have $A_u = A_1$ for $u\ge 3$, and $A_2 = A_4 = A_1$, so all the sets are equal by the previous case. If $m = 3$ then since $|i - j| > 1$ we must have $i = 1, j = 3$.
    \end{proof}
\end{prop}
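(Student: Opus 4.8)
The plan is to use harmonicity to turn the set-equality hypothesis $A_i = A_j$ into cardinality information about intersections, and then read off from those sizes which sets are forced to coincide.

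First I would clear away the degenerate cases: if $m = 1$ there is nothing to prove, and if $m = 2$ then necessarily $i = 1$, $j = 2$, so the hypothesis is exactly $A_1 = A_2$. Hence assume $m \ge 3$. Applying harmonicity to the singletons $\{u\}$, all of which have run decomposition $(1)$, shows that every $A_u$ has the same cardinality, say $k$. The hypothesis then reads $|A_i \cap A_j| = k$; that is, $A_i \cap A_j$ has the same size as each of $A_i$ and $A_j$.

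The main step is to split on whether the pair $\{i,j\}$ is consecutive. If $j = i+1$, then $\{i,j\}$ has run decomposition $(2)$, so harmonicity forces $|A_u \cap A_{u+1}| = k$ for every $u$; together with $|A_u| = |A_{u+1}| = k$ this gives $A_u = A_{u+1}$ for all $u$, and chaining over $u$ yields $A_1 = \cdots = A_m$. If instead $j \ge i+2$, then $\{i,j\}$ has run decomposition $(1,1)$, so harmonicity gives $A_u = A_v$ for every pair $\{u,v\}$ with $|u-v| \ge 2$. When $m \ge 4$ this already forces all the sets to be equal (for instance $A_1 = A_3 = A_4 = \cdots = A_m$ and $A_2 = A_4 = A_1$), whereas when $m = 3$ the only pair at distance $\ge 2$ is $\{1,3\}$, which puts us in the exceptional case $(m,i,j) = (3,1,3)$ and leaves nothing further to prove.

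I do not expect a real obstacle here; the only point worth recording is that the exceptional case is genuinely unavoidable — taking $U = \{a,b\}$ with $A_1 = A_3 = \{a\}$ and $A_2 = \{b\}$ produces a harmonic system on three sets with $A_1 = A_3 \ne A_2$ — so the statement cannot be tightened, and the whole content of the argument is the short bookkeeping of the run decompositions $(1)$, $(2)$, and $(1,1)$.
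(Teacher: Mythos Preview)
Your proposal is correct and follows essentially the same approach as the paper: dispose of $m\le 2$, use harmonicity on singletons to get a common size $k$, then split on whether $\{i,j\}$ has run decomposition $(2)$ or $(1,1)$ and chain the resulting equalities. Your treatment of the nonconsecutive case is in fact a bit cleaner than the paper's (you correctly work with $|u-v|\ge 2$ rather than the paper's $|u-v|>2$), and your remark that the exceptional case $(m,i,j)=(3,1,3)$ is genuinely realized is a worthwhile addition.
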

\begin{prop}\label{prop:harsizeone}
    Let $H = (U, \pa{A_1, \dots, A_m})$ be a harmonic set system such that $|A_i| = 1$ for each $i$. Then one of the following is true.
    \begin{enumerate}[label=\textup(\arabic*\textup)]
        \item $A_1 = \dots = A_m,$
        \item $H$ is isomorphic to the set system $\pa{[m], \pa{\{1\}, \{2\}, \dots, \{m\}}},$ or
        \item $m = 3$ and $H$ is isomorphic to  $([2],\pa{\{1\}, \{2\}, \{1\}}).$
    \end{enumerate}
    \begin{proof}
    For any $i, j$ we either have $A_i = A_j$ or $A_i\cap A_j  = \varnothing$. We have two cases 
    \begin{itemize}
        \item $A_i  = A_j$ for some $i < j$. By Proposition \ref{prop:setequal}, (1) holds unless $m = 3$ and $i = 1, j = 3$. In the case $m = 3$ and $i = 1, j = 3$, if $A_1 = A_2$ or $A_2 = A_3$ then by Proposition \ref{prop:setequal}, (1) holds. Otherwise, we have $A_1 = A_3 = \{x\}$ for some $x$ and $A_2\cap A_1  =A_3\cap A_2  = \varnothing$ so $A_2 = \{y\}$ for some $y\ne x$, so (3) holds.
        \item $A_i\cap A_j = \varnothing$ for all $i < j$. Then we have $A_i = \{a_i\}$ with $a_1, \dots, a_m$ are pairwise distinct, so (2) holds.\qedhere
    \end{itemize}
    \end{proof}
\end{prop}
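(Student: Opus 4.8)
The plan is to reduce the proposition to Proposition~\ref{prop:setequal} by a short case split on whether two of the (singleton) sets $A_i$ coincide. First I would write $A_i = \{a_i\}$ for each $i$ and record the basic dichotomy: for any $i,j$, since $|A_i| = |A_j| = 1$, either $A_i = A_j$ (when $a_i = a_j$) or $A_i \cap A_j = \varnothing$ (when $a_i \ne a_j$). I would work with $U = \bigcup_i A_i = \{a_1,\dots,a_m\}$ (the case relevant to the applications, where the set systems in question are complete), so that exhibiting an isomorphism of $H$ onto a target system amounts to giving a bijection of the supports that carries each $A_i$ to the corresponding set.

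\emph{Case 1: the $a_i$ are pairwise distinct.} Then $|U| = m$ and the map $a_i \mapsto i$ is a bijection $U \to [m]$ sending $A_i$ to $\{i\}$, so $H \cong ([m], (\{1\},\dots,\{m\}))$ and conclusion~(2) holds. This case uses nothing about harmonicity.

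\emph{Case 2: $A_i = A_j$ for some $i < j$.} Here I would invoke Proposition~\ref{prop:setequal}: either $A_1 = \dots = A_m$, which is conclusion~(1), or $(m,i,j) = (3,1,3)$. In the latter subcase $A_1 = A_3$, and if in addition $A_1 = A_2$ or $A_2 = A_3$ then a second application of Proposition~\ref{prop:setequal} (now with a pair of equal indices that is not the exceptional triple) forces $A_1 = A_2 = A_3$, again conclusion~(1). Otherwise $A_2 \ne A_1 = A_3$, so by the dichotomy $A_1 \cap A_2 = \varnothing$; writing $A_1 = A_3 = \{x\}$ and $A_2 = \{y\}$ with $x \ne y$, the bijection $x \mapsto 1$, $y \mapsto 2$ gives $H \cong ([2], (\{1\},\{2\},\{1\}))$, which is conclusion~(3).

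The whole argument is bookkeeping once Proposition~\ref{prop:setequal} is available, so I do not anticipate a real obstacle; the one point that needs a little care is the exceptional triple $(m,i,j) = (3,1,3)$ produced by Proposition~\ref{prop:setequal}, where one must separately examine the two possible positions of $A_2$ relative to $A_1 = A_3$ to decide between conclusions~(1) and~(3).
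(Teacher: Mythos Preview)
Your proof is correct and follows essentially the same approach as the paper: the same singleton dichotomy, the same case split on whether two of the $A_i$ coincide, and the same appeal to Proposition~\ref{prop:setequal} (including the identical handling of the exceptional case $(m,i,j)=(3,1,3)$). The only difference is cosmetic---you treat the ``all distinct'' case first, and you make explicit the assumption $U = \bigcup_i A_i$ that the paper's proof also tacitly uses when concluding the isomorphisms in (2) and (3).
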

\begin{lm}\label{lm:4to2}
    There does not exist a harmonic set system $H = (U, (A_1, A_2, A_3, A_4))$ with $U\sbs [4]$ such that $A_4 = [2]$ and $A_i\cap A_4 = \{i\}$ for $i\in \{1, 2\}$ and $A_2 = \{2,3\}.$ 
    \begin{proof}
        Note that we have $|A_3\cap (A_2\cup A_4)| = |A_3|+|A_2\cup A_4| - |A_3\cup A_2\cup A_4|\ge 2 + 3 -4 = 1$ so $A_3\cap (A_2\cup A_4)\ne\varnothing$, so $A_3\cap A_2$ or $A_3\cap A_4$ is nonempty, by harmonicity we have $A_i\cap A_{i+1}$ is nonempty for $1\le i\le 3$. Thus, $A_1\cap A_2$ is nonempty so the other element of $A_1$ must be $3$, that is, $A_1 = \{1, 3\}$. Now if $A_3\subseteq [3]$ then note that it must be equal to one of $A_1, A_2, A_4$ since $A_1, A_2, A_4$ are all possible subsets of size $2$ of $[3]$, which by Proposition \ref{prop:setequal} implies $A_3 = A_4$, a contradiction. So we may assume $4\in A_3$. Then let $a_3$ be the other element of $A_3$, then $[3]\sm \{a_3\}$ must be one of $A_1, A_2, A_4$ since it is a subset of size $2$ of $[3]$. Since $[3]\sm \{a_3\}\cap A_3 = \varnothing$ we have $A_3 \cap A_j = \varnothing$ for some $j\in\{1, 2, 4\}$, but we have $A_2\cap A_3, A_3\cap A_4$ are nonempty so $j = 1$. But then $|A_1\cap A_3| = 0\ne 1 = |A_2\cap A_4|$, which contradicts harmonicity.  
    \end{proof}
\end{lm}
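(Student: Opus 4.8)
The plan is to use harmonicity on intersections of one, two, and three of the $A_i$ to pin the sets down and reach a contradiction. First I would note that, applying harmonicity to the singleton index sets (all of which have run decomposition $(1)$), every $A_i$ has the same size; since $A_2 = \{2,3\}$ and $A_4 = [2]$ have size $2$, all four sets have size $2$. From $A_1\cap A_4 = \{1\}$ we get $1\in A_1$ and $2\notin A_1$, so $A_1\in\pa{\{1,3\},\{1,4\}}$. This is the only ``branch point''.

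Next I would extract the numerical constraints that harmonicity imposes. The index sets $\{1,3\},\{1,4\},\{2,4\}$ are exactly the size-$2$ subsets of $[4]$ with run decomposition $(1,1)$, so $\abs*{A_1\cap A_3} = \abs*{A_1\cap A_4} = \abs*{A_2\cap A_4} = 1$. Similarly $\{1,2\},\{2,3\},\{3,4\}$ are the size-$2$ subsets with run decomposition $(2)$, so $\abs*{A_1\cap A_2} = \abs*{A_2\cap A_3} = \abs*{A_3\cap A_4} =: c$; and since $A_1$ avoids $2$ while $A_2 = \{2,3\}$, we have $A_1\cap A_2\sbs\{3\}$, forcing $c\in\{0,1\}$.

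Then I would split on $c$. If $c = 0$, then $3\notin A_1$, so $A_1 = \{1,4\}$; moreover $A_2\cap A_3 = \varnothing$ forces $A_3\sbs [4]\sm\{2,3\} = \{1,4\}$, hence $A_3 = \{1,4\} = A_1$, contradicting $\abs*{A_1\cap A_3} = 1$ (alternatively, via Proposition \ref{prop:setequal}, $A_1 = A_3$ with $m = 4$ forces all $A_i$ equal, contradicting $A_1\ne A_4$). If $c = 1$, then $A_1 = \{1,3\}$, and the size-$2$ set $A_3$ must satisfy $\abs*{A_3\cap\{1,2\}} = \abs*{A_3\cap\{2,3\}} = \abs*{A_3\cap\{1,3\}} = 1$; running through the six size-$2$ subsets of $[4]$ shows none meets all three conditions, the final contradiction.

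I do not expect a genuine obstacle: the statement is a small finite case check once harmonicity is invoked correctly. The only care needed is to make sure each harmonicity identity I use comes from two index sets that genuinely have the same run decomposition \emph{within} $[4]$, and to dispose of the degenerate ``all $A_i$ equal'' possibility (through Proposition \ref{prop:setequal}) separately from the direct cardinality contradiction.
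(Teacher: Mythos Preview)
Your proof is correct and follows essentially the same strategy as the paper's: both reduce to a finite case check on $A_1$ and $A_3$ using the harmonicity constraints on pairwise intersections. The organization differs slightly---the paper first argues via an inclusion-exclusion bound that $A_3\cap A_2$ or $A_3\cap A_4$ is nonempty (forcing your $c\ge 1$ directly, so your $c=0$ case never arises), and then splits on whether $4\in A_3$ rather than brute-forcing all six $2$-subsets---but the underlying idea is the same.
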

\begin{lm}\label{lm:523}
    Suppose $H = (U, (A_1, A_2, A_3, A_4, A_5))$ is a harmonic set system with $U\sbs [5]$ such that $A_5 = \{1,2\}, A_3 = \{1,3\}, A_1\cap A_5 = A_3\cap A_5 =  \{1\}$ and $ A_2\cap A_5 = \{2\}.$ Then $H$ is isomorphic to $([5], (\{1,4\}, \{2,5\}, \{1,3\}, \{4,5\}, \{1,2\})).$
    \begin{proof}
        Since $|A_3\cap A_5| = 1$ we have $|A_1\cap A_3| = 1$ so the other element of $A_1$ cannot be $3$, and it cannot be $2$ since $2\notin B_1$, so without loss of generality assume it's $4$, that is, $A_1 = \{1, 4\}$. By harmonicity we have $|A_2\cap A_3| = |A_2\cap A_1|$. If $A_2\cap A_3| = |A_2\cap A_1|\ge 1$ then we have $4\in A_2$ (since $A_2\cap A_1\ne\varnothing$) and $3\in A_2$ (since $A_2\cap A_3\ne\varnothing$), so $|A_2|\ge 3$, which is a contradiction. Thus, we must have $|A_2\cap A_3| = |A_2\cap A_1| = 0$, so $1, 3, 4\notin A_2$. Thus, $A_2 = \{2, 5\}$. By harmonicity we have $|A_3\cap A_4| = |A_4\cap A_5| = 0$, so $1, 2, 3\notin A_4$, so $A_4 = \{4,5\}$. 
    \end{proof}
\end{lm}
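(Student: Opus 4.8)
The plan is to extract the numerical constraints that harmonicity imposes when $m=5$ and then use them to force each $A_i$ in turn. For $m=5$, a two-element index set $\{i,j\}$ has run decomposition $(2)$ if $|i-j|=1$ and $(1,1)$ if $|i-j|\ge 2$, and every singleton has run decomposition $(1)$; hence harmonicity of $H$ yields a common size $k=|A_i|$, a common consecutive-intersection size $c=|A_i\cap A_{i+1}|$, and a common nonconsecutive-intersection size $d=|A_i\cap A_j|$ for $|i-j|\ge 2$ (there are also conditions on triple and larger intersections, which I will only need to check are consistent at the very end). From $A_5=\{1,2\}$ we read off $k=2$, and from $A_1\cap A_5=\{1\}$ (in agreement with the hypotheses $A_3\cap A_5=\{1\}$ and $A_2\cap A_5=\{2\}$) we read off $d=1$.

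First I would pin down $A_1$: since $A_1\cap A_5=\{1\}$ we have $1\in A_1$ and $2\notin A_1$, so $A_1=\{1,x\}$ with $x\in\{3,4,5\}$; the value $x=3$ is excluded because it would give $A_1=A_3$ and hence $|A_1\cap A_3|=2\ne d$, so after relabelling the elements $4$ and $5$ of $U$ if necessary (legitimate, as we only claim isomorphism) I may assume $A_1=\{1,4\}$. Next I would pin down $A_2$ using $c$: the hypothesis $A_2\cap A_5=\{2\}$ gives $1\notin A_2$, so $A_2$ can meet $A_1=\{1,4\}$ only in $4$ and $A_3=\{1,3\}$ only in $3$; if $c=|A_1\cap A_2|=|A_2\cap A_3|\ge 1$ then $\{2,3,4\}\sbs A_2$, contradicting $|A_2|=2$. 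Hence $c=0$, so $A_2$ is disjoint from $A_1\cup A_3=\{1,3,4\}$ and therefore $A_2=\{2,5\}$. The value $c=0$ now propagates along consecutive index pairs, forcing $A_4$ to be disjoint from $A_3\cup A_5=\{1,2,3\}$, so $A_4=\{4,5\}$. Finally $\bigcup_i A_i=[5]$ forces $U=[5]$, and the system obtained is exactly $([5],(\{1,4\},\{2,5\},\{1,3\},\{4,5\},\{1,2\}))$.

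The work here is entirely a finite case-chase, so the only real hazard is bookkeeping: one must be careful about which index pairs are consecutive versus nonconsecutive when invoking harmonicity, must handle the single WLOG relabelling so the final list matches the claimed representative on the nose, and should close by verifying that the remaining harmonic conditions---most visibly $|A_1\cap A_4|=|A_2\cap A_4|=1$, and the vanishing of every triple and quadruple intersection---hold automatically for the system produced. There is no genuine difficulty; everything collapses once $A_1$ is fixed.
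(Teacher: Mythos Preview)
Your proposal is correct and follows essentially the same route as the paper's proof: both fix $A_1=\{1,4\}$ via the nonconsecutive intersection constraint and a harmless $4\leftrightarrow 5$ relabelling, then use the consecutive-intersection constraint to force $c=0$ (yielding $A_2=\{2,5\}$) and propagate $c=0$ to obtain $A_4=\{4,5\}$. The only cosmetic difference is that you name the invariants $k,c,d$ up front, whereas the paper invokes harmonicity step by step; your closing remarks about $U=[5]$ and the higher-order intersection checks are sound but not needed, since harmonicity is a hypothesis rather than a conclusion to verify.
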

\begin{lm}\label{lm:star}
    For $m\ge 4$, there does not exist a harmonic set system $H = (U, (A_1, \dots, $ $A_m))$ with $U\sbs [m]$ such that $A_m = [2]$ and $A_i\cap A_m = \{1\}$ for $1\le i\le m-2.$
    \begin{proof}
        By Proposition \ref{prop:setequal} we have $A_i\ne A_j$ for all $i\ne j$ (otherwise $[2] = A_m  = A_1 = A_1\cap A_m = \{1\}.$)
        For each $1\le i\le m-2$ let $a_i$ denotes the other element of $A_i$, that is, $A_i = \{1, a_i\}$ for $1\le i\le m-2$; and denote $a_{m} = 2$ so that $A_m = \{1, a_m\}$. Since $A_j\ne A_j$ for $i\ne j$, we have  $a_i\ne a_j$ for $i, j\in [m]\sm\{m-1\}, i\ne j$. This implies $|A_1\cap A_2| = 1$, so by harmonicity we have $|A_i\cap A_j| = 1$ when $|i-j| = 1$. But we have $|A_1\cap A_m| = 1$ which implies $|A_i\cap A_j| = 1$ when $|i-j|>1$. Thus, $|A_i\cap A_j| = 1$ for $i\ne j$. If $A_{m-1}$ does not contain $1$ then since $|A_{m-1}\cap A_i| = 1$ for all $i\in [m]\sm \{m-1\}$ we must have $a_i\in A_{m-1}$ for all $i\in [m]\sm \{m-1\}$, so $|A_{m-1}|\ge m-1\ge 3$, which is a contradiction. Thus, we have $1\in A_{m-1}$, and define $a_{m-1}$ to be the other element in $A_{m-1}.$ Since $A_j\ne A_j$ for $i\ne j$, we have $a_i \ne a_j$ for some $1\le i < j \le m$ so $1, a_1, \dots, a_m$ are $m+1$ distinct elements in $U$, which is a contradiction.
    \end{proof}
\end{lm}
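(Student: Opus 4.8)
The plan is to squeeze enough rigidity out of harmonicity that the sets $A_i$ are forced to pack more than $m$ distinct elements into $U$, contradicting $U\sbs[m]$.

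First I would record the easy structural consequences. Comparing singleton index sets under harmonicity shows all $|A_i|$ are equal, so since $|A_m|=2$ we have $|A_i|=2$ for every $i$. Then \Cref{prop:setequal} forces the $A_i$ to be pairwise distinct: a coincidence $A_i=A_j$ with $m\ge 4$ cannot be the exceptional triple $(3,1,3)$, so it would make all the $A_i$ equal, giving $A_1=A_m=\{1,2\}$ and contradicting $A_1\cap A_m=\{1\}$.

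Next I would exploit the hypothesis on the first $m-2$ sets. For $1\le i\le m-2$, $A_i\cap A_m=\{1\}$ forces $A_i=\{1,a_i\}$ with $a_i\notin\{1,2\}$; setting $a_m:=2$, distinctness of these $m-1$ sets is the same as distinctness of $a_1,\dots,a_{m-2},a_m$. In particular $|A_1\cap A_2|=1$, so harmonicity (run decomposition $(2)$) gives $|A_i\cap A_{i+1}|=1$ for all $i$, and $|A_1\cap A_m|=1$ (run decomposition $(1,1)$, valid since $m\ge 4$) gives $|A_i\cap A_j|=1$ whenever $|i-j|\ge 2$; hence $|A_i\cap A_j|=1$ for all $i\ne j$.

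The hard part---really the only nontrivial point---is the remaining index $m-1$, about which the hypothesis says nothing. I would argue: if $1\notin A_{m-1}$, then for each $i\ne m-1$ the singleton $A_{m-1}\cap A_i$ must lie in $A_i\sm\{1\}=\{a_i\}$, so $A_{m-1}$ contains all $m-1\ge 3$ distinct values $a_i$, contradicting $|A_{m-1}|=2$. Hence $1\in A_{m-1}$, so $A_{m-1}=\{1,a_{m-1}\}$ as well. Now every $A_i$ has the form $\{1,a_i\}$ with the $A_i$ pairwise distinct, so $a_1,\dots,a_m$ are pairwise distinct and all differ from $1$, making $\{1,a_1,\dots,a_m\}$ a set of $m+1$ distinct elements of $U$---impossible since $U\sbs[m]$.
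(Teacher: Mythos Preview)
Your proof is correct and follows essentially the same approach as the paper's: use \Cref{prop:setequal} to get pairwise distinctness, write $A_i=\{1,a_i\}$ for $i\ne m-1$, use harmonicity to force all pairwise intersections to have size $1$, argue that $1\in A_{m-1}$, and conclude by counting $m+1$ distinct elements. Your version is in fact slightly cleaner in that you explicitly record $|A_i|=2$ at the outset rather than leaving it implicit.
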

\begin{lm}\label{lm:bcharmonic}
    Suppose $H = (U, (A_1, \dots, A_m))$ is a harmonic set system. For $1\le i\le m-2$ define $B_i := A_i\cap A_m$ and $C_i := A_i\sm A_m.$ Then $B := (A_m, (B_1, \dots, B_{m-2}))$ and $C := (A_m, (C_1, \dots, C_{m-2}))$ are harmonic set systems.
    \begin{proof}
        For any $I, J\subseteq [m-2]$ with the same run decomposition, note that $I \cup \{m\}$ and $J\cup \{m\}$ have same run decomposition, so we have 
        \begin{equation}
            \abs*{\bigcap_{i\in I}B_i} = \abs*{\bigcap_{i\in I \cup \{m\}}A_i} = \abs*{\bigcap_{j\in J \cup \{m\}}A_j} = \abs*{\bigcap_{j\in J}B_j},
        \end{equation}
        and 
        \begin{align}
            \abs*{\bigcap_{i\in I}C_i} &= \abs*{\pa{\bigcap_{i\in I }A_i}\sm A_m} = \abs*{\bigcap_{i\in I}A_i} - \abs*{\pa{\bigcap_{i\in I }A_i}\cap A_m} = \abs*{\bigcap_{i\in I}A_i} - \abs*{\pa{\bigcap_{i\in I \cup \{m\}}A_i}}\notag\\
            &= \abs*{\bigcap_{j\in J}A_j} - \abs*{\pa{\bigcap_{j\in J \cup\{m\}}A_j}}   = \abs*{\pa{\bigcap_{j\in J }A_j}\sm A_m} = \abs*{\bigcap_{j\in J}C_j}.\qedhere
        \end{align}
    \end{proof}
\end{lm}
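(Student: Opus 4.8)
The plan is to verify the defining condition of harmonicity (Definition \ref{def:harmonicset}) for $B$ and for $C$ by a direct computation: rewrite any intersection of the $B_i$'s (resp.\ $C_i$'s) in terms of intersections of the original sets $A_i$, and then invoke the harmonicity of $H$. The combinatorial fact that makes this work is that, since the indices range only over $[m-2]$, adjoining the index $m$ to a set $I \subseteq [m-2]$ yields a set $I \cup \{m\}$ whose run decomposition is that of $I$ together with one extra part of size $1$---the index $m-1$ always separates $I$ from $m$, so $m$ forms its own run. In particular, if $I$ and $J$ have the same run decomposition, then so do $I \cup \{m\}$ and $J \cup \{m\}$. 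The choice of universe for $B$ and $C$ is immaterial here, since the only index set that could force a constraint involving the universe is the empty one, whose run decomposition is shared by no other set.

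For $B$, I would fix $I, J \subseteq [m-2]$ with the same run decomposition and note $\bigcap_{i \in I} B_i = \pa{\bigcap_{i \in I} A_i} \cap A_m = H_{I \cup \{m\}}$, and likewise for $J$; then harmonicity of $H$ gives $\abs{H_{I \cup \{m\}}} = \abs{H_{J \cup \{m\}}}$, which is precisely $\abs{\bigcap_{i\in I} B_i} = \abs{\bigcap_{j \in J} B_j}$. For $C$, I would instead write $\bigcap_{i \in I} C_i = \pa{\bigcap_{i \in I} A_i} \sm A_m$, so that $\abs{\bigcap_{i \in I} C_i} = \abs{H_I} - \abs{H_{I \cup \{m\}}}$; both terms on the right are unchanged when $I$ is replaced by $J$---the first directly by harmonicity of $H$, the second by the computation just made for $B$---so the two cardinalities agree.

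I do not expect a real obstacle: the argument is a short computation with no inclusion--exclusion. The only points that need a moment's attention are the run-decomposition bookkeeping---which is exactly where the hypothesis $i \le m-2$, rather than $i \le m-1$, is used---and the step turning a set difference into a difference of cardinalities, which is valid because $\pa{\bigcap_{i \in I} A_i} \cap A_m$ is itself the set $H_{I \cup \{m\}}$ whose size is controlled by $H$.
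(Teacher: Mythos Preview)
Your proposal is correct and follows essentially the same argument as the paper: rewrite $\bigcap_{i\in I}B_i$ as $H_{I\cup\{m\}}$ and $\abs{\bigcap_{i\in I}C_i}$ as $\abs{H_I}-\abs{H_{I\cup\{m\}}}$, then use that $I\cup\{m\}$ and $J\cup\{m\}$ have the same run decomposition whenever $I,J\subseteq[m-2]$ do. Your explicit remark about why the restriction to $[m-2]$ matters (so that $m$ always forms its own run) and the observation that the choice of universe is immaterial are both helpful clarifications not spelled out in the paper.
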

\begin{lm}\label{lm:harmonicm}
    Suppose $H = (U, \pa{A_1, \dots, A_m})$ is a harmonic set system such that $|U|\le m$. Then one of the following is true.
    \begin{enumerate}[label=\textup(\arabic*\textup)]
        \item $A_1 = \dots = A_m,$
        \item $H$ is isomorphic to the set system $\pa{[m], \pa{\{1\}, \{2\}, \dots, \{m\}}}$ or its complement,
        \item $m = 3$ and $H$ is isomorphic to  $([2],\pa{\{1\}, \{2\}, \{1\}})$ or $([3],\pa{\{1\}, \{2\}, \{1\}})$ or \\$([3],(\{1,2\}, \{2,3\}, \{1,2\}))$, or
        \item $m = 5$ and $H$ is isomorphic to the set system $([5],\pa{\{1,4\}, \{2,5\}, \{1,3\}, \{4,5\}, \{1,2\}})$ or its complement.
    \end{enumerate}
    \begin{proof}
        We use induction on $m.$ Since the set system is harmonic, there is some $k$ such that $|A_i| = k$ for all $i$. Note that all the four properties above and harmonicity are preserved under taking complement (see Corollary \ref{cor:comharset}). Thus, we may without loss of generality assume $k\le \frac m2$, otherwise we can replace $H$ with its complement.

        Note that if $k = 0$ then $A_1 = \dots = A_m =  \varnothing$, so (1) holds. If $k = 1$ then one of (1), (2) or (3) holds by Proposition \ref{prop:harsizeone}. Thus we may assume $k\ge 2$. 
        
        Since $k\le \frac m2$ we have $m\ge 4$. For $1\le i\le m-2$ define $B_i := A_i\cap A_m$ and $C_i := A_m\sm A_i,$ and define $B := (A_m, (B_1, \dots, B_{m-2}))$ and $C := (U\sm A_m, (C_1, \dots, C_{m-2}))$, which are harmonic set systems by \Cref{lm:bcharmonic}.  

        Note that we have $|A_m| = k\le \frac m2\le m-2$ and $\abs*{U\sm A_m} \le m - k\le  m-2$, so $B$ and $C$ are harmonic set systems satisfying the inductive assumption.
         
        Now we consider the case $k = 2$. Without loss of generality assume $U \subseteq [m]$ and $A_ m = \{1,2\}$. Note that if $m = 4$ then $m-2 = 2$ and so $(B_1, \dots, B_{m-2})$ can only satisfy either (1) or (2). If $m > 4$ then $m-2 > 2$, so  $|A_m| <  m-2$, so $B$ can only satisfy (1) or (3) since (2) and (4) requires the size of the universe set to be exactly equal to the number of sets. Also note that $B$ can only satisfy (3) when $m-2 = 3$ which is when $m = 5$. Thus, we have three cases:
        \begin{itemize}
            \item $B$ satisfies (1). Note that if $|B_i| = 2$ then $A_i = A_m$ and by Proposition \ref{prop:setequal} (1) holds. If $|B_i| = 1$ then without loss of generality assume $B_i = \{1\}$ for $1\le i\le m-2$, and we have a contradiction by \Cref{lm:star}. Thus, $|B_i| = 0$. Then $|A_1\cap A_m| = 0$  and by harmonicity $|A_i\cap A_j| = 0$ for $|i-j|>1$. Thus, $A_1, A_3, \dots, A_{2\ceil*{\frac m2}-1}$ are $\ceil*{\frac m2}$ pairwise disjoint sets of size $2$ each, so we have their union has size $2\ceil*{\frac m2}$, so $m\ge 2\ceil*{\frac m2}$, so $m$ is even.
                Since $B_i = \varnothing$ we have $C_i = A_i$ for all $1\le i\le m-2$.
                Since $m$ is even, $m-2$ is even, so $C$ cannot satisfy properties (3) or (4), and thus must satisfy (1) or (2). But it cannot satisfy (2) since (2) requires the sets to have size $1$ or $m-2-1 = m-3$ which are odd numbers, whereas $|C_i| = 2$. Thus, it satisfies (1) which means $C_1 = C_2$ and so $A_1 = A_2$ which by Proposition \ref{prop:setequal} implies (1) holds for the original set system $H$.
            \item $m = 4$ and $B$ satisfies (2). Without loss of generality, assume $B_1 = \{1\}, B_2 = \{2\}$, $A_2 = \{2, 3\}$. This is a contradiction by \Cref{lm:4to2}.
            \item $m= 5$ and $B$ satisfies (3). Note that $B$ must be isomorphic to $([2], (\{1\}, \{2\}, \{1\}))$ since $|A_m| = 2$. Without loss of generality assume $B_1= B_3 =\{1\}, B_2 = \{2\}, A_3 = \{1, 3\}$. Then (4) holds by \Cref{lm:523}.
        \end{itemize}
        Finally, we consider the case $2<k\le \frac m2$. Note that $m\ge 6$. Then note that $|A_m|< m-2$, and note that  $\abs*{U\sm A_m}\le m-k < m-2$. Thus, both $B$ and $C$ cannot satisfy (2) or (4) since the size of the universe is strictly less than $m-2$, the number of set. Since $m\ge6$ we have $m-2\ge 4$ so both $B$ and $C$ cannot satisfy (3). Thus, both $B$ and $C$ must satisfy (1). This means $B_1 = B_2$ and $C_1 = C_2$ so $A_1 = B_1\cup C_1 = B_2\cup C_2 = A_2$, so by Proposition \ref{prop:setequal}, (1) holds for $H$.
    \end{proof}
\end{lm}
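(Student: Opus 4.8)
The plan is to prove the lemma by induction on $m$, exploiting two reductions throughout: the complement symmetry from Corollary~\ref{cor:comharset}, which lets us assume the common size of the $A_i$ is at most $m/2$, and the pair of induced harmonic subsystems of Lemma~\ref{lm:bcharmonic}, which have $m-2$ sets and so fall under the inductive hypothesis. To set up, harmonicity forces a common value $k := |A_1| = \cdots = |A_m|$ (compare the singleton index sets $\{u\}$ and $\{v\}$). Since each of the four listed conclusions, as well as harmonicity, is invariant under complementation, we may assume $k \le m/2$. If $k = 0$, all $A_i$ are empty and conclusion (1) holds; if $k = 1$, Proposition~\ref{prop:harsizeone} gives conclusion (1), (2), or (3). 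As $\lfloor m/2\rfloor \le 1$ when $m \le 3$, this also settles the base cases, so from now on take $m \ge 4$, $k \ge 2$ (so $m \ge 2k \ge 4$), and assume the lemma for all smaller $m$.

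For the inductive step, pass to the two induced subsystems of Lemma~\ref{lm:bcharmonic} at $A_m$: the system $B$ on universe $A_m$ with sets $B_i := A_i \cap A_m$, and the system $C$ on universe $U \sm A_m$ with sets $C_i := A_i \sm A_m$, each having $m-2$ sets. Their universes have sizes $|A_m| = k \le m/2 \le m-2$ and $|U| - k \le m-2$, so the inductive hypothesis applies to both $B$ and $C$. If $k \ge 3$, then $m \ge 6$ and both universes have size strictly less than $m-2$: this excludes subsystem types (2) and (4), which force the universe size to equal the number of sets, and type (3), which forces the number of sets to be $3$; hence $B$ and $C$ are both of type (1), so $B_1 = B_2$ and $C_1 = C_2$, giving $A_1 = B_1 \cup C_1 = B_2 \cup C_2 = A_2$, and Proposition~\ref{prop:setequal} forces conclusion (1).

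The substantive case is $k = 2$; normalize $A_m = \{1,2\}$. Since $|A_m| = 2$, the subsystem $B$ (which has $m-2$ sets) can be of type (2) only if $m = 4$ and of type (3) only if $m = 5$, and is otherwise of type (1). If $B$ is of type (1): when $|B_i| = 2$ we get $A_i = A_m$ and Proposition~\ref{prop:setequal} gives (1); when $|B_i| = 1$, relabelling so that $B_i = \{1\}$ for all $i \le m-2$ places us in the situation forbidden by Lemma~\ref{lm:star}; and when $|B_i| = 0$, harmonicity applied to the non-adjacent index pair $\{1,m\}$ forces the odd-indexed $A_i$ to be pairwise disjoint sets of size two, so $2\lceil m/2\rceil \le |U| \le m$ forces $m$ even, whereupon $C$ --- whose $m-2$ (even) sets $C_i = A_i$ all have size two --- cannot be of type (2), (3), or (4), hence is of type (1), giving $A_1 = A_2$ and conclusion (1). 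The remaining two possibilities are $m = 4$ with $B$ of type (2), which after relabelling so that $A_2 = \{2,3\}$ is precisely the configuration ruled out by Lemma~\ref{lm:4to2}; and $m = 5$ with $B$ of type (3), necessarily $B \cong ([2],(\{1\},\{2\},\{1\}))$ because $|A_m| = 2$, in which case Lemma~\ref{lm:523} identifies $H$ as the type (4) system.

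I expect the $k = 2$ analysis to be the main obstacle --- in particular the $|B_i| = 0$ branch and the three appeals to the auxiliary Lemmas~\ref{lm:star}, \ref{lm:4to2}, and~\ref{lm:523}, which carry the genuine combinatorial content. Everything else is careful bookkeeping about which inductive type a subsystem can assume, given the size of its universe and the parity of its number of sets.
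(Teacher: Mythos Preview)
Your proof is correct and follows essentially the same approach as the paper: induction on $m$, complement reduction to $k\le m/2$, the $k\le 1$ cases handled by Proposition~\ref{prop:harsizeone}, and for $k\ge 2$ the descent to the two induced harmonic subsystems $B,C$ of Lemma~\ref{lm:bcharmonic}, with the $k=2$ case split according to the type of $B$ and dispatched via Lemmas~\ref{lm:star}, \ref{lm:4to2}, \ref{lm:523}. The bookkeeping on which types $B$ and $C$ can assume, and the parity argument in the $|B_i|=0$ branch, match the paper's reasoning line for line.
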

\begin{proof}[Proof of Theorem \ref{thm:symmk}]
    Note that $(3)$ clearly implies $ (1)$. We will show $(1)\implies (2)$ and $(2)\implies (3)$.

    We first show  $(1) \implies (2)$. Let $S' = S\sm \{\iota_n, \delta_n\}$. Since any subset of $\{\iota_n, \delta_n\}$ is symmetric, we have $S'$ is symmetric by Proposition \ref{prop:permcupsm}. Note that $A(S') = (S', (A_1, \dots, A_{k-1}))$ is harmonic, reduced and complete (Proposition \ref{prop:symm2harset} and Proposition \ref{prop:conid}). Since $|S'|\le |S|\le k-1$, by Lemma \ref{lm:harmonicm} we have $A(S')$ satisfies one of the four properties (1)---(4) of \Cref{lm:harmonicm}. 
    \begin{casesp}
        \item $A(S')$ satisfies (1) of Lemma \ref{lm:harmonicm}. Since $A(S')$ is reduced we must have $A_i = \varnothing$ for all $i$, and $A(S')$ is complete so $S' = \bigcup\limits_{i = 1}^{n-1}A_i = \varnothing$. So $S \subseteq \{\iota_n, \delta_n\}$.
        \item $A(S')$ satisfies (2) of Lemma \ref{lm:harmonicm}. By taking complement, without loss of generality assume $A(S')$ is isomorphic to $\pa{\{1\}, \dots, \{n-1\}}$. Then each $A_i = \{\pi_i\}$ for some $\pi_i\in S'$ such that all the $\pi_i$ are distinct. Since $S\supseteq S'$ has at most $k-1$ elements, we have $S = S' = \{\pi_1, \dots, \pi_{n-1}\}$, and note that $\Des(\pi_i) = \{i\}$.
        \item $A(S')$ satisfies (3) of Lemma \ref{lm:harmonicm}. This means $n-1 = 3$ so $n = 4$. Note that $A(S')$ is isomorphic to $\pa{[2], \pa{\{1\}, \{2\}, \{1\}}}$ since the other two set system are either not reduced or not complete. Then $A_1 = A_3 = \{\pi_1\}$ and $A_2 = \{\pi_2\}$ for some distinct $\pi_1, \pi_2\in S$. Thus we have $S' = \{\pi_1, \pi_2\}$, with $\Des(\pi_1) = \{1, 3\}$ and $\Des(\pi_2) = \{2\}$. Since $|S| \le 3$ and since $S\sm S'\subseteq \{\iota_n, \delta_n\}$, we have $S = \{\pi_1, \pi_2\}$ or $S = \{\pi_1, \pi_2, \iota_n\}$ or $S = \{\pi_1, \pi_2, \delta_n\}$.
        \item $A(S')$ satisfies (4) of Lemma \ref{lm:harmonicm}. This means $n- 1 = 5$ so $n = 6$. By taking complement, without loss of generality assume $A(S')$ is isomorphic to $([5],(\{1,4\}, \{2,5\},$ $\{1,3\}, \{4,5\}, \{1,2\})).$ Then $A_1 =\{\pi_1, \pi_4\},$  $A_2 = \{\pi_2, \pi_5\}$, $A_3 = \{\pi_1, \pi_3\}$, $A_4 = \{\pi_1, \pi_5\}$, $A_5 = \{\pi_1, \pi_2\}$, for $\pi_1, \dots, \pi_5$ are $5$ distinct elements in $S'$. Since $|S'|\le|S|\le 5$ we have $S = S' = \{\pi_1, \dots, \pi_5\}$, with $\Des(\pi_1) = \{1, 3, 5\}$, $\Des(\pi_1) = \{2,4\}$, $\Des(\pi_3) = \{4\},$ $\Des(\pi_4) = \{1, 4\},$ $\Des(\pi_4) = \{2, 4\}.$ 
    \end{casesp}

    Now we show $(2)\implies (3)$. Note that $\iota_n, \delta_n$ has corresponding quasisymmetric generating function $s_{(n)}$, $s_{(1^n)}$ respectively, so if $S$ satisfies (a) then $S$ is Schur-positive. If $S$ satisfies (b), then by taking complement, without loss of generality assume $\Des(\pi_i) = \{i\}$ for all $i$. 
    Then \begin{equation}
        Q(S) = \sum_{w\in S}F_{n, \Des(w)} = \sum_{i = 1}^{n-1}F_{n, \{i\}}
    \end{equation}
    We claim that the above sum is equal to $s_{(n-1, 1)}$. Indeed, note that $(n-1,1)$ has $n-1$ SYT, which can be obtained by choosing one of $2, 3,\dots, n$ into the box on the second row and putting the remaining numbers in ascending order in the first row. Then the descent set is $\{a-1\}$ where $a$ is the number in the second row. Thus, by Theorem \ref{thm:slambda} \begin{equation}
        s_{(n-1, 1)}= \sum_{a = 2}^{n}F_{n, \{a-1\}}=\sum_{i = 1}^{n-1}F_{n, \{i\}} = Q(S)
    \end{equation}
    as claimed. 
    
    If $S$ satisfies (c), then the quasisymmetric generating function is $F_{4, \{2\}} + F_{4, \{1, 3\}} + F'$
    where $F' = 0$ if $S = \{\pi_1, \pi_2\}$ and $F' = s_{(1^n)}$ if $S = \{\pi_1, \pi_2, \delta_n\}$ and $F' = s_{(n)}$ if $S = \{\pi_1, \pi_2, \iota_{n}\}$. Since $F'$ is Schur-positive, it suffices to show $F_{4, \{2\}} + F_{4, \{1, 3\}}$ is Schur-positive. But note that $(2,2)$ has two SYT with descent sets $\{1, 3\}$ and $\{2\}$ respectively, so $F_{4, \{2\}} + F_{4, \{1, 3\}} = s_{(2,2)}$ by Theorem \ref{thm:slambda}.

    If $S$ satisfies (d), then by taking complement, without loss of generality assume we are in the first case. Then 
    \begin{equation}
        Q(S) = F_{6, \{1,3,5\}}+ F_{6, \{2,5\}}+F_{6, \{3\}}+F_{6, \{1,4\}}+F_{6, \{2,4\}}
    \end{equation}
    One can check that the partition $(3,3)$ has exactly $5$ SYT with exactly the above descent sets, and thus 
    \begin{equation} 
        s_{(3,3)} =  F_{6, \{1,3,5\}}+ F_{6, \{2,5\}}+F_{6, \{3\}}+F_{6, \{1,4\}}+F_{6, \{2,4\}} = Q(S)
    \end{equation} by Theorem \ref{thm:slambda}.
\end{proof} 
    
We now turn our attention to symmetrically avoided sets of sizes at most $k-1.$
\begin{defn}[{\cite[Section 4]{bloomsagan}}]\label{def:partialsh}
    For any $a\le k,$ define the following \textit{partial shuffle} in $S_k$:
    \begin{equation}
        (1, 2, \dots,\what{a}, \dots, k) \cshuffle (a) = (1, 2, \dots, \what{a},\dots, k)\shuffle (a) \sm \{\iota_k\}
    \end{equation}
    where  $\what{a}$ denotes deletion of the element $a$ and $\shuffle$ denotes the standard shuffle. For example, 
    \begin{equation}
        (1,2, \what{3}, 4, 5, 6)\cshuffle (3) = \{312456, 132456, 124356, 124536, 124563\}
    \end{equation}
\end{defn}

It was proven in \cite[Theorem 4.1]{bloomsagan} that partial shuffles are Schur-positively avoided, and in \cite[Proposition 2.6]{HZP} that subsets of $\{\iota_k, \delta_k\}$ are Schur-positively avoided. By \Cref{cor:crpat}, complement of partial shuffles are Schur-positively avoided as well. Theorem \ref{thm:smallsaset} claims that in fact, partial shuffles, their complement, and subsets of $\{\iota_k, \delta_k\}$ are the only symmetrically avoided sets of size at most $k-1$. 
The theorem will be proved through a series of lemmas. We will first prove some lemmas about permutations with only one descent.
\begin{lm}\label{lm:countconone}
    Fix $1 \le i < j < k$. Let $\sigma\in S_k$ with $\Des(\sigma) = \{i\}$ or $\Des(\sigma) = \{j\}.$ Then there are $k$ permutations $\pi\in S_{k+1}$ containing $\sigma$ such that $\Des(\pi) = \{i, j+1\}.$ 
    \begin{proof}
        
        We consider the case $\Des(\sigma) = \{i\},$ the proof for the case $\Des(\sigma) = \{j\}$ is analogous.
        Note that if $\pi$ contains $\sigma$, 
        we can obtain a plot of it by adding one more point to the standard plot of $\sigma$. The vertical and horizontal lines passing through the points in the plot splits the plane into $(k+1)^2$ regions (see Figure \ref{fig:addpoint}.)
        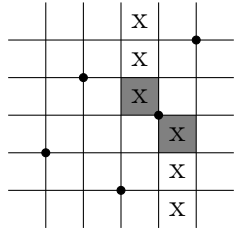
\begin{figure}[h!]
            \begin{tikzpicture}[scale=0.5]
                \fill[gray] (3,3) rectangle (4,4);
        \fill[gray] (4,2) rectangle (5,3);
                \filldraw (1,2) circle (3pt);
        \filldraw (2,4) circle (3pt);
        \filldraw (3,1) circle (3pt);
        \filldraw (4,3) circle (3pt);
        \filldraw (5,5) circle (3pt);
        \foreach \i in {1,2,3,4,5}
        {\draw[thin] (\i, 0)--(\i, 6);
        \draw[thin] (0, \i)--(6,\i);
        
        }
        
        \node at (3.5, 3.5){x};
        \node at (3.5, 4.5){x};
        \node at (3.5, 5.5){x};
        \node at (4.5, 2.5){x};
        \node at (4.5, 1.5){x};
        \node at (4.5, 0.5){x};
            \end{tikzpicture}
            \caption{Here, $k = 5$, $i = 2, j = 3$. Adding a point to one of the regions indicated with an x creates a permutation with descents at $2$ and $4$. The two grayed regions give the same permutations.}\label{fig:addpoint}
        \end{figure}
        Since $\sigma$ has only one descent at $i$, to create a permutation with descent at $i$ and $j+1$, this new point must create the new descent. That is, it can be added between $x = j-1$ and $x = j$ line (so that it becomes the new $j$-th point from the left), and must be higher than the 
        $y = \sigma(j)$ line to create a descent; or it can be added between $x = j$ and $x = j+1$ line (so that it becomes the new $j+1$-th point from the left), and must be lower than the $y = \sigma(j)$ line to create a descent. Thus, there are in total $k+1$ possible regions to add this new point. However, note that adding a point to the $(j-1, j)\times (\sigma(j), \sigma(j)+1)$ gives the same permutation as adding a point to $(j, j+1)\times (\sigma(j)-1, \sigma(j))$ (see Figure \ref{fig:addpoint}). Thus there are $k$ total distinct permutations.
    \end{proof}
\end{lm}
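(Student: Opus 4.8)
The plan is to count the desired $\pi$ directly, using the geometric picture of pattern containment. Every $\pi\in S_{k+1}$ that contains $\sigma$ is obtained from the standard plot of $\sigma$ by adding a single point, and conversely every such addition yields a $\pi$ containing $\sigma$, since deleting the added point and standardizing recovers $\sigma$. I would therefore parametrize an addition by the \emph{column slot} it occupies --- equivalently, the position $q\in\{1,\dots,k+1\}$ the new point has in $\pi$, which inserts it between columns $q-1$ and $q$ of $\sigma$ --- together with the \emph{height} $h\in\{1,\dots,k+1\}$, the rank of the new value among all $k+1$ values. The one caveat is that the map $(q,h)\mapsto\pi$ need not be injective, so the strategy is to count the additions realizing $\Des(\pi)=\{i,j+1\}$ and then subtract the number of coincidences.

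Assume $\Des(\sigma)=\{i\}$; the case $\Des(\sigma)=\{j\}$ reduces to this one by reversing all permutations in sight, since reversal preserves containment and sends $\Des(\sigma)=\{j\}$ to $\Des(\sigma^r)=\{k-j\}$ and $\Des(\pi)=\{i,j+1\}$ to $\Des(\pi^r)=\{k-j,\,k+1-i\}$, which is again of the required form. I would first record how adding a point at position $q$ transforms descents: on positions $\{1,\dots,q-2\}$ the descents of $\pi$ are exactly those of $\sigma$, on positions $\{q+1,\dots,k\}$ they are those of $\sigma$ shifted up by one, and at positions $q-1$ and $q$ the presence of a descent is decided by comparing $h$ with $\sigma(q-1)$ and with $\sigma(q)$. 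A short case check on $q$, using $i<j$, then shows that $\Des(\pi)=\{i,j+1\}$ forces either $q=j+1$ (the new point is the $(j+1)$-st entry and the fresh descent lies to its right), which needs $h>\sigma(j+1)$, or $q=j+2$ (the new point is the $(j+2)$-nd entry and the fresh descent lies to its left), which needs $h\le\sigma(j+1)$; in both cases the old descent at $i$ survives and no spurious descent appears, because $j-1\ge i$ and, when $j+1<k$, $\sigma(j+1)<\sigma(j+2)$ (as $j\notin\Des(\sigma)$). This produces exactly $(k+1-\sigma(j+1))+\sigma(j+1)=k+1$ valid additions, the boundary case $j+1=k$ (where $\sigma(j+2)$ is absent) giving the same two counts.

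It remains to identify the coincidences. Within the family $q=j+1$ the addition is recovered from $\pi$ via $\pi(j+1)=h$, so its members are pairwise distinct, and similarly $\pi(j+2)=h$ separates the members of the family $q=j+2$. Comparing the two families, the only pair with $\pi=\pi'$ is $(q,h)=(j+1,\,\sigma(j+1)+1)$ against $(q,h)=(j+2,\,\sigma(j+1))$: both insert a new value lying immediately above $\sigma(j+1)$ directly after the entry $\sigma(j+1)$ (so that near the insertion $\pi$ reads $\dots,\sigma(j),\sigma(j+1)+1,\sigma(j+1),\dots$), and one checks these two additions give literally the same permutation. Subtracting this single coincidence leaves $(k+1)-1=k$ permutations, as claimed. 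I expect the real crux to be exactly this last step --- showing the number of collisions is one, neither zero nor more --- since that is what turns the clean count $k+1$ into the asserted value $k$; the descent-transformation bookkeeping and the $j+1=k$ boundary case are routine but do need care.
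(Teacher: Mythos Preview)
Your proposal is correct and follows essentially the same approach as the paper's proof: both add a single point to the plot of $\sigma$, identify the $k+1$ admissible insertion regions (your cases $q=j+1$ and $q=j+2$), and then observe that exactly one pair of insertions coincides, yielding $k$ distinct permutations. Your parametrization by $(q,h)$ and your reversal reduction for the case $\Des(\sigma)=\{j\}$ are slightly more explicit than the paper's geometric description and its ``analogous'' remark, but the underlying argument is the same.
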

\begin{defn}
    Let $i, t, k$ be integers with $1\le i,t \le k.$
    Define $Q_k(i, t)$ to be the permutation $1,2, \dots, i-1, t, i, \dots, \what t, \dots, k$ if $t > i$ and $1,2, \dots, \what t,\dots, i+1,t, i+2, \dots, k$ if $t\le i.$ We may drop the subscript $k$ when it's clear from context.
\end{defn}
\begin{rmk}\label{rmk:caseworkg}
    For $\sigma\in S_k$, we have that $\sigma$ contains $\iota_{k-1}$ if and only if there exists $i,t$ such that $\sigma = Q(i,t).$ In this case, $\Des(\sigma) = \{i\}.$  
    Note that $Q_k(i,i) = Q_k(i,i+1)$ for all $i$, but otherwise all $Q_k(i,t)$ are distinct. See Figure \ref{fig:addpoint2}.
\end{rmk}

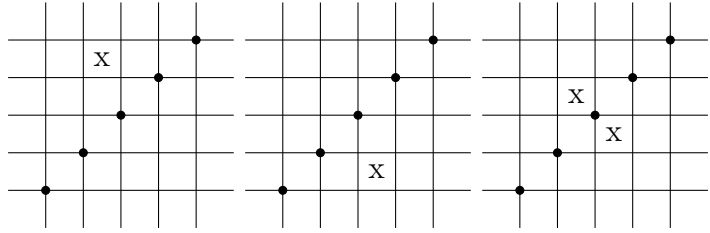
\begin{figure}[h!]
    \begin{tikzpicture}[scale=0.5]
\foreach \i in {1,2,3,4,5}
{\draw[thin] (\i, 0)--(\i, 6);
\draw[thin] (0, \i)--(6,\i);
\filldraw (\i,\i) circle (3pt);
}
\node at (2.5, 4.5){x};
\end{tikzpicture}
\begin{tikzpicture}[scale=0.5]
    \foreach \i in {1,2,3,4,5}
    {\draw[thin] (\i, 0)--(\i, 6);
    \draw[thin] (0, \i)--(6,\i);
    \filldraw (\i,\i) circle (3pt);
    }
    \node at (3.5, 1.5){x};
\end{tikzpicture}
\begin{tikzpicture}[scale=0.5]
    \foreach \i in {1,2,3,4,5}
    {\draw[thin] (\i, 0)--(\i, 6);
    \draw[thin] (0, \i)--(6,\i);
    \filldraw (\i,\i) circle (3pt);
    }
    \node at (3.5, 2.5){x};
    \node at (2.5, 3.5){x};
\end{tikzpicture}
    \caption{Here, $k = 6, i = 3$. Left: Add a point in the $(2, 3)\times (4,5)$ region to obtain $Q(3,5) = 125346$. Middle: Add a point in the $(3, 4)\times (1,2)$ region to obtain $Q(3,2) = 134256$. Right: Add a point to $(2, 3)\times (3,4)$ or to $(3, 4)\times (2,3)$ to obtain $Q(3,3) = Q(3,4) = 124356$.}
    \label{fig:addpoint2}
\end{figure}
\begin{defn}
    For a permutation $\pi$, define $\del(\pi,i)$ to be the sequence obtained by deleting the $i$-th letter of $\pi$ (in one-line notation.)
\end{defn}
For all the remaining lemmas in this section, we will have the following setup. Let $\Pi= \{\sigma_1, \dots, \sigma_{k-1}\}\sbs S_k$ be such that $\Des(\sigma_i) = \{i\}$ for each $i$.
\begin{lm}\label{lm:countcontwo}
    For $1\le i < k$, define 
    \begin{equation}
        P_i:= \{\std(\del(\sigma_i, i)), \std(\del(\sigma_i, i+1))\},
    \end{equation}
    which may have only one element if the two deletions are the same.
    Fix $i< j$, and let $N$ be the number of permutation $\pi\in S_{k+1}$ containing both $\sigma_i$ and $\sigma_j$ with $\Des(\pi) = \{i, j+1\}.$ Then 
    \begin{enumerate}[label=\textup(\arabic*\textup)]
        \item $N= 0$ if and only if $P_i\cap P_j = \varnothing.$
        \item If $P_i\cap P_j = \{\iota_{k-1}\}$ and there is no $t$ such that $\sigma_i = Q(i,t)$ and $\sigma_j = Q(j,t)$ then $N = 1.$
        \item If $P_i\cap P_j \sps \{\iota_{k-1}\}$ and there is a $t$ such that $\sigma_i = Q(i,t)$ and $\sigma_j = Q(j,t)$ then $N\ge 2.$
    \end{enumerate}
    \begin{proof}
        We first show (1). Suppose $w \in P_i\cap P_j$. Then one can obtain a plot of $\sigma_i$ from the standard plot of $w$ by adding a point that creates a descent at $i$, i.e. adding a point $A$ in $(i-1, i)\times (w(i), k)$ or in $(i, i+1)\times (0,w(i))$. Similarly, one can obtain a plot of $\sigma_j$ from the standard plot of $w$ by adding a point $B$ that creates a descent at $j.$
        If we add both of $A$ and $B$ in such a way that they are not on the same horizontal line and $A$ is to the left $B$, then we get a permutation of length $k+1$ that contains both $\sigma_i, \sigma_j$ and has descent at $i, j+1$ (the descents at $j$ get reindexed into $j+1$ due to the adding of the $A.$)

        Conversely, suppose $\pi\in S_{k+1}$ contains both $\sigma_i, \sigma_j$ and $\Des(\pi) = \{i, j+1\}$. Let $P$ be the standard plot of $\pi$. Then one can delete one point $A$ from $P$ to obtain a plot of $\sigma_i$, which must delete the $j+1$ descent, and thus must be the $j+1$-th or $j+2$-th point.
        Similarly, one can delete one point $B$ from the standard plot of $\pi$ to obtain $\sigma_j$, which must be either the $i$ or $i+1$-th point.
        We have $A\ne B$ since $\sigma_i\ne \sigma_j$. Then $P\sm \{A, B\}$ is obtainable by starting with $P\sm \{A\}$, which is a plot of $\sigma_i$, and delete $B$, the $i$ or $i+1$-th point. Also, $P\sm \{A, B\}$ is obtainable by starting with $P\sm \{B\}$, which is a plot of $\sigma_j$, and delete $A$, the $j$ or $j+1$-th point (the deletion of $B$ from $P$ reindex $A$ into $j$ or $j+1$.) Thus, $P\sm \{A, B\}$ is a plot of a permutation in $P_i\cap P_j$.

        To show (2), note that we proved above that permutations $\pi\in S_{k+1}$ containing both $\sigma_i$ and $\sigma_j$ with $\Des(\pi) = \{i, j+1\}$ must be obtained by starting with a plot of a permutation $w\in P_i\cap P_j$ and add two points, where adding one creates a plot of $\sigma_i$---call this point $A$, while adding the other creates a plot of $\sigma_j$---call this point $B$. We must have $w = \iota_{k-1}$. Without loss of generality, assume we start with the standard plot of $\iota_{k-1}$.

        Note that the $x$-coordinate of $A$ must be in $(i-1,i+1)$ and the $x$-coordinate of $B$ must be in $(j-1,j+1).$ If $j > i+1$ then $A$ is to the left of $B$. If $j  = i+1$, we still have $A$ is to the left of $B$, since otherwise we must have $A\in (i, i+1)\times (0, i)$ and $B\in (i, i+1)\times (i, k)$, which would create a permutation with only one descent.

        Note that the $y$-coordinate of $A$ can be in the range $(t,t+1)$ if and only if $\sigma_i = Q(i,t)$. By \Cref{rmk:caseworkg} and the fact that there is no common $t$ such that $\sigma_i = Q(i,t)$ and $\sigma_j = Q(j,t)$, we deduce that the relative order of the $y$-coordinates of $A$ and $B$ is also determined. Therefore, we get the same permutation regardless of the way we added $A$ and $B$.

        To prove (3), note that one can start with the standard plot of $\iota_{k-1}$ and add two points $A$ and $B$ as above. Since $\sigma_i = Q(i,t)$ and $\sigma_j = Q(j,t)$, it is possible to change the relative ordering of the $y$-coordinates of $A$ and $B$. Thus, we can get $2$ different permutations, so $N \ge 2$.
    \end{proof}
\end{lm}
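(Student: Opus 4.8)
The plan is to argue entirely in terms of plots, using the operations of deleting a point from, or adding a point to, a plot, exactly as in the proof of \Cref{lm:countconone}. The starting observation is this: if $\pi\in S_{k+1}$ contains $\sigma_i$, then some plot of $\sigma_i$ is obtained from the standard plot of $\pi$ by deleting a single point, and since $\Des(\sigma_i)=\{i\}$ while $\Des(\pi)=\{i,j+1\}$, this deletion must destroy the descent at $j+1$, so it removes the $(j+1)$-st or $(j+2)$-nd point of $\pi$ --- call this point $A$. Symmetrically, an occurrence of $\sigma_j$ in $\pi$ comes from deleting a point $B$ that is the $i$-th or $(i+1)$-st point of $\pi$. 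Since $\sigma_i\ne\sigma_j$ we have $A\ne B$, and since $i<j$ the point $B$ lies strictly to the left of $A$. Deleting both $A$ and $B$ from the standard plot of $\pi$ then gives, after standardization, a permutation in $P_i\cap P_j$: deleting $A$ first leaves a plot of $\sigma_i$ in which $B$ still occupies position $i$ or $i+1$, so deleting $B$ from it lands in $P_i$; deleting $B$ first leaves a plot of $\sigma_j$ in which $A$ now occupies position $j$ or $j+1$, so deleting $A$ from it lands in $P_j$.

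For part (1), the observation above shows that $N\ge 1$ forces $P_i\cap P_j\ne\varnothing$. Conversely, given $w\in P_i\cap P_j$, I would start from the standard plot of $w$, add a point $A$ creating a descent at $i$ so as to obtain a plot of $\sigma_i$, and add a point $B$ creating a descent that, after the reindexing caused by $A$, sits at $j+1$, so as to recover $\sigma_j$; taking $A$ to the left of $B$ and off all horizontal lines, the resulting permutation of $S_{k+1}$ contains both $\sigma_i$ and $\sigma_j$ and has descent set $\{i,j+1\}$. The one subtlety is the case $j=i+1$, where the horizontal strips available for $A$ and for $B$ overlap: here one checks that placing $A$ to the right of $B$ would force a configuration producing only a single descent, so the valid configurations still have $A$ to the left of $B$. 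Hence $N\ge 1$ if and only if $P_i\cap P_j\ne\varnothing$, which is part (1).

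For parts (2) and (3), I would isolate the contribution of $w=\iota_{k-1}$, which is all of $P_i\cap P_j$ under the hypothesis of (2) and part of it under the hypothesis of (3). By \Cref{rmk:caseworkg}, a permutation obtained from $\iota_{k-1}$ by adding one point that creates a descent at $i$ is exactly some $Q(i,t)$, so $\sigma_i$ determines the horizontal position and the height-strip of the added point $A$, up to the harmless coincidence $Q(i,i)=Q(i,i+1)$; likewise $\sigma_j$ determines $B$. Since $A$ must be placed to the left of $B$, the resulting $\pi$ is completely determined by $(\sigma_i,\sigma_j)$ except, possibly, for the relative vertical order of $A$ and $B$, and that order is unconstrained precisely when the height-strips of $A$ and $B$ coincide, i.e.\ precisely when $\sigma_i=Q(i,t)$ and $\sigma_j=Q(j,t)$ for one common $t$. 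Thus, when no such common $t$ exists and $P_i\cap P_j=\{\iota_{k-1}\}$, there is exactly one such $\pi$, so $N=1$, which is part (2); and when such a $t$ exists, both choices of vertical order yield valid permutations with descent set $\{i,j+1\}$, so $N\ge 2$, which is part (3).

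The main obstacle I anticipate is the bookkeeping in this last step: making precise exactly how $\sigma_i$ pins down the admissible horizontal position and height of the inserted point $A$, and in particular verifying that when the two height-strips do coincide, \emph{both} orderings of $A$ and $B$ genuinely produce the descent set $\{i,j+1\}$ rather than something smaller --- together with carefully disposing of the boundary case $j=i+1$ wherever it arises. The remaining content is a routine, if somewhat tedious, translation between ``$\pi$ contains $\sigma$'' and ``$\sigma$ is obtained from the plot of $\pi$ by deleting a point.''
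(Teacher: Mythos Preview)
Your proposal is correct and follows essentially the same approach as the paper: both argue via deleting and inserting points in plots, both identify the deleted points $A$ and $B$ with the positions $\{j+1,j+2\}$ and $\{i,i+1\}$ respectively, and both use the $Q(i,t)$ parametrization (via \Cref{rmk:caseworkg}) to pin down the height-strips and thereby decide whether the relative vertical order of $A$ and $B$ is forced. The only cosmetic difference is that you raise the $j=i+1$ boundary case already in part~(1), whereas the paper defers it to part~(2) where it is actually needed for uniqueness; the content is the same.
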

\begin{lm}\label{lm:p1p2}
    Define $P_i$ as in Lemma \ref{lm:countcontwo}. Suppose $\sigma_1 = Q(1,t)$ and $\sigma_4 = Q(4,t)$ for some $t > 2$ and there is $t'$ such that $\sigma_2 = Q(2,t')$ and $\sigma_4 = Q(4,t')$. Then either $t = t'$ or $P_1\cap P_2 = \{\iota_{k-1}\}.$
    \begin{proof}
        Since $Q(4,t) = \sigma_4 = Q(4,t'),$ by \Cref{rmk:caseworkg}, we have $t = t'$ or $\{t, t'\} = \{4,5\}.$
        If $\{t, t'\} = \{4,5\}$ one can directly check that $P_1\cap P_2 = \{\iota_{k-1}\}.$
    \end{proof}
\end{lm}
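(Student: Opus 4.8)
The plan is to reduce to a tiny finite check using \Cref{rmk:caseworkg}, whose essential content is that the only coincidence among the permutations $Q_k(i,\cdot)$ is $Q_k(i,i)=Q_k(i,i+1)$, all others being distinct. Since $Q(4,t)=\sigma_4=Q(4,t')$, this forces either $t=t'$ — in which case the stated conclusion holds with nothing to prove — or $\{t,t'\}=\{4,5\}$. So it remains to treat the two configurations $(t,t')=(4,5)$ and $(t,t')=(5,4)$, and in each of them I would compute $P_1$ and $P_2$ directly from the definition $P_i=\{\std(\del(\sigma_i,i)),\std(\del(\sigma_i,i+1))\}$ and verify that the intersection is $\{\iota_{k-1}\}$.

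For the computation: both $\sigma_1=Q(1,t)$ and $\sigma_2=Q(2,t')$ contain $\iota_{k-1}$, and deleting the letter that carries the unique descent — the first letter of $\sigma_1$, the second letter of $\sigma_2$ — returns $\iota_{k-1}$ after standardization, so $\iota_{k-1}\in P_1\cap P_2$ automatically. The remaining element of $P_1$ comes from deleting the second letter of $\sigma_1$; writing $\sigma_1=Q(1,t)$ in one-line notation ($t$ followed by $1,2,\dots,k$ with $t$ omitted) and standardizing yields a permutation of the shape $Q_{k-1}(1,t-1)$, which has its descent at position $1$. Symmetrically, the remaining element of $P_2$ is $Q_{k-1}(2,t'-1)$, whose descent is at position $2$. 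Since these two ``extra'' elements have their descents in different positions, they differ from each other and from $\iota_{k-1}$; hence $P_1\cap P_2=\{\iota_{k-1}\}$ in both cases $(t,t')\in\{(4,5),(5,4)\}$, which completes the proof.

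The one point I would be careful about — and the only place the hypothesis on $\sigma_4$ is genuinely used — is that $t'$ is not assumed to exceed $2$, so a priori a degenerate coincidence could arise: for instance if $\sigma_2=Q(2,1)$, then the extra element of $P_2$ has its descent at position $1$, and this matches the extra element of $P_1$ when $t=3$, which would make $P_1\cap P_2$ strictly larger than $\{\iota_{k-1}\}$. This offending configuration $(t,t')=(3,1)$ is exactly what $Q(4,t)=Q(4,t')$ rules out by \Cref{rmk:caseworkg}, since $Q(4,3)\ne Q(4,1)$. So the proof is just the reduction to $\{t,t'\}=\{4,5\}$ followed by the two explicit standardizations above; the remaining ``obstacle'' is only the bookkeeping of those standardizations, which is routine.
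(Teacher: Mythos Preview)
Your proof is correct and follows the same route as the paper: reduce via \Cref{rmk:caseworkg} to $\{t,t'\}=\{4,5\}$, then verify $P_1\cap P_2=\{\iota_{k-1}\}$ by an explicit computation. Your added observation that the two ``extra'' elements have descents at positions $1$ and $2$ respectively is a clean way to carry out what the paper leaves as ``one can directly check,'' and your closing remark nicely isolates why the $\sigma_4$ hypothesis is needed.
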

\begin{lm}\label{lm:countconf}
    There are
    $k-1$ permutations $\pi \in S_{k+1}$ containing $\sigma_1$ with $\Des(\pi) = \{1,2\}.$ 
    \begin{proof}
        Note that $\sigma_1 = Q_k(1,t)$ for some $1 < t \le k.$ Then the permutations in $S_{k+1}$ containing $\sigma$ with descent set $\{1,2\}$ are $z, t, 1,\dots, \what t , \dots,\what z, k+1$ for $k+1\ge z > t$ and $t+1, z, 1, \dots \what{z}, \dots, \what{t+1}, \dots, k+1$ for $t> z > 1.$
    \end{proof}
\end{lm}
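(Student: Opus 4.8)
The plan is to make both $\sigma_1$ and the relevant permutations of $S_{k+1}$ completely explicit and then count by deleting a single letter.

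First I would note that since $\Des(\sigma_1) = \{1\}$, the entries $\sigma_1(2), \dots, \sigma_1(k)$ are increasing, so $\sigma_1$ contains $\iota_{k-1}$; by Remark~\ref{rmk:caseworkg} this forces $\sigma_1 = Q_k(1,t)$ for some $t$, and since $Q_k(1,1) = Q_k(1,2)$ we may take $2 \le t \le k$, so that in one-line notation $\sigma_1 = t\,1\,2\cdots(t-1)\,(t+1)\cdots k$. Next I would describe the target permutations: $\pi \in S_{k+1}$ has $\Des(\pi) = \{1,2\}$ exactly when $\pi(1) > \pi(2) > \pi(3) < \pi(4) < \cdots < \pi(k+1)$, and then $\pi(3)$ lies below every other entry, so $\pi(3) = 1$. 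Hence such a $\pi$ is determined by the pair $(a,b) = (\pi(1),\pi(2))$ with $2 \le b < a \le k+1$: it equals $\pi_{a,b} := a\,b\,1\,c_1\cdots c_{k-2}$, where $c_1 < \cdots < c_{k-2}$ list $[k+1]\setminus\{1,a,b\}$ in increasing order.

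Since $\sigma_1$ has one letter fewer than $\pi_{a,b}$, the permutation $\pi_{a,b}$ contains $\sigma_1$ if and only if deleting some one of its letters yields a word standardizing to $\sigma_1$. Deleting a letter of the increasing tail leaves the prefix $a\,b\,1$ and hence a second descent, which is impossible. Deleting the $1$ in position~$3$ leaves $a\,b\,c_1\cdots$, which has a second descent when $b \ge 3$, and when $b = 2$ standardizes to $Q_k(1,a-1)$. Deleting position~$1$ leaves $b\,1\,c_1\cdots c_{k-2}$, which standardizes to $Q_k(1,b)$; deleting position~$2$ leaves $a\,1\,c_1\cdots c_{k-2}$, which standardizes to $Q_k(1,a-1)$. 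So $\pi_{a,b}$ contains $\sigma_1 = Q_k(1,t)$ precisely when $b = t$ (with $a \in \{t+1,\dots,k+1\}$) or $a = t+1$ (with $b \in \{2,\dots,t\}$); these two families have $k+1-t$ and $t-1$ members and overlap only in $\pi_{t+1,t}$, giving $(k+1-t)+(t-1)-1 = k-1$ permutations in total.

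I expect the only real subtlety to be the bookkeeping in this last step: correctly ruling out the deletions that create a second descent (including the boundary behavior of the position-$3$ deletion depending on whether $b = 2$ or $b \ge 3$), computing the two standardizations $Q_k(1,b)$ and $Q_k(1,a-1)$, and not forgetting to subtract the single permutation $\pi_{t+1,t}$ that lies in both families. Everything else is routine, and an equivalent (already disjoint) way to organize the final count is exactly the two families $z\,t\,1\cdots\widehat{t}\cdots\widehat{z}\cdots(k+1)$ for $z>t$ and $(t+1)\,z\,1\cdots\widehat{z}\cdots\widehat{t+1}\cdots(k+1)$ for $1<z<t$.
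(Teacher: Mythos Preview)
Your proposal is correct and follows essentially the same approach as the paper: identify $\sigma_1 = Q_k(1,t)$ explicitly and then enumerate the permutations $\pi\in S_{k+1}$ with $\Des(\pi)=\{1,2\}$ containing it. The paper simply lists the two disjoint families (which you also record at the end), while you additionally justify completeness by the delete-one-letter analysis; the arithmetic agrees either way.
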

\begin{lm}\label{lm:countaddnodes}
    There are $k+1-i$ permutations $\pi \in S_{k+1}$  containing $\sigma_i$ with $\Des(\pi) = \{i+1\}$ and there are $i+1$ permutations $\pi \in S_{k+1}$ containing $\sigma_i$ with $\Des(\pi) = \{i\}.$
    \begin{proof}
        We first count the permutations $\pi \in S_{k+1}$ containing $\sigma_i$ with $\Des(\pi) = \{i+1\}$. We want to add one more point to the standard plot of $\sigma$ in such a way that it creates the plot of a permutation with unique descent at $\{i+1\}$. To do this, one need to add a point to the left of the $x = i$ line so that the current descent at $i$ gets reindexed into $i+1$, and it should be added into a region that does not create new descent. That is, it must be added to $(a-1, a)\times (y_{a}, y_{a+1})$ for some $a\le i$, with 
        \begin{equation}
            y_a := \begin{cases}
                0, &\text{if } a = 0,\\
                \sigma_i(a),& \text{if }1\le a \le i,\\
                k+1,&\text{if }a = i+1.
            \end{cases}
        \end{equation}
        (See Figure \ref{fig:addpointbefore}.) 
        \begin{figure}[h!]
        \begin{tikzpicture}[scale=0.5]
            \foreach \i in {1,2,3,4,5,6}
            {\draw[thin] (\i, 0)--(\i, 7);
            \draw[thin] (0, \i)--(7,\i);
            }
            \filldraw (1,1) circle (3pt);
            \filldraw (2,2) circle (3pt);
            \filldraw (3,5) circle (3pt);
            \filldraw (4,3) circle (3pt);
            \filldraw (5,4) circle (3pt);
            \filldraw (6,6) circle (3pt);
            \node at (0.5, 0.5){a};
            \node at (1.5, 1.5){a};
            \node at (2.5, 2.5){a};
            \node at (2.5, 3.5){b};
            \node at (2.5, 4.5){c};
            \node at (3.5, 5.5){c};
            \node at (3.5, 6.5){d};
        \end{tikzpicture}
        \begin{tikzpicture}[scale=0.5]
            \foreach \i in {1,2,3,4,5,6}
            {\draw[thin] (\i, 0)--(\i, 7);
            \draw[thin] (0, \i)--(7,\i);
            }
            \filldraw (1,1) circle (3pt);
            \filldraw (2,2) circle (3pt);
            \filldraw (3,4) circle (3pt);
            \filldraw (4,5) circle (3pt);
            \filldraw (5,3) circle (3pt);
            \filldraw (6,6) circle (3pt);
            \node at (4.5, 0.5){a};
            \node at (4.5, 1.5){b};
            \node at (4.5, 2.5){c};
            \node at (5.5, 3.5){c};
            \node at (5.5, 4.5){d};
            \node at (5.5, 5.5){e};
            \node at (6.5, 6.5){e};
        \end{tikzpicture}
            \caption{Here, $k = 6.$ Left: adding a point in one of regions with letters creates a permutation in with descent set $\{4\}$. Regions with the same letter give the same permutation. Right: adding point in one of regions with letters creates a permutation with descent set $\{4\}.$}
            \label{fig:addpointbefore}
        \end{figure}
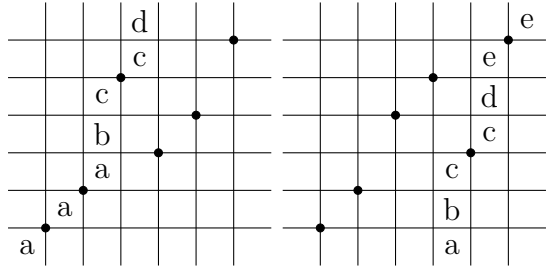
        There are a total of $k+1$ such regions, since in each of the $k+1$ row there is exactly one region. However, there are $i$ points of the original permutations of the form $(a, \sigma_i(a))$ with $1\le a\le i$. Two regions that are diagonally connected by these points will give the same permutation. Since there are $i$ points and each point merge two regions into one, the number of different permutations is $k+1-i.$ 

        We now count the permutations $\pi \in S_{k+1}$  containing $\sigma_i$ with $\Des(\pi) = \{i\}$. We need to add one point to the standard plot of $\sigma_i$ to create a permutation with unique descent at $i$. To do this, one need to add a point to the right of the $x = i$, and it should be added into a region that does not create new descent. That is, the new point must be added to $(a, a+1)\times (y_a, y_{a+1})$ for some $a\ge i$, where 
        \begin{equation}
            y_a =\begin{cases}
                0, &\text{if } a = i,\\
                \sigma_i(a), &\text{if }i < a < k,\\
                k+1, &\text{if }a = k.
            \end{cases}
        \end{equation}
        (See Figure \ref{fig:addpointbefore}.) Note that we also have $k+1$ different regions, and there are $k- i$ points of the original permutation $\sigma_i$ each of which merge two regions into one, so there are $(k+1) - (k-i) = i+1$ different permutations.
    \end{proof}
\end{lm}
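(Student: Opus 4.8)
The plan is to argue geometrically, in the style of the plot-and-add-a-point arguments used earlier (cf.\ Figure~\ref{fig:addpoint}). Since $\Des(\sigma_i)=\{i\}$ we have $\sigma_i(1)<\cdots<\sigma_i(i)>\sigma_i(i+1)<\cdots<\sigma_i(k)$. Every $\pi\in S_{k+1}$ containing $\sigma_i$ is obtained by adjoining one point to the standard plot $P_0=\{(a,\sigma_i(a)):a\in[k]\}$ and restandardizing, and the resulting permutation $\pi_C$ depends only on which of the $(k+1)^2$ open cells cut out by the $2k$ lines $x=a,\ y=\sigma_i(a)$ ($a\in[k]$) the new point falls into. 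Conversely, $\pi_C=\pi_{C'}$ precisely when $C$ and $C'$ are linked by a chain of elementary identifications, each merging the two cells that meet a point $(a,\sigma_i(a))$ of $P_0$ along the diagonal of positive slope; the underlying fact is the elementary observation that for distinct positions $p\ne q$ of $\tau\in S_{k+1}$ one has $\std(\del(\tau,p))=\std(\del(\tau,q))$ iff $p,q$ are consecutive positions and $\{\tau(p),\tau(q)\}$ are consecutive values. So each of the two counts reduces to enumerating the relevant cells and then quotienting by this equivalence.

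For the count with $\Des(\pi)=\{i+1\}$: a short case check shows $\Des(\pi_C)=\{i+1\}$ exactly when the new point either lies between the points of $P_0$ at positions $a-1$ and $a$ with value in $(\sigma_i(a-1),\sigma_i(a))$ for some $1\le a\le i$ (convention $\sigma_i(0)=0$) --- reindexing the descent at $i$ to $i+1$ without creating a new one --- or lies between positions $i$ and $i+1$ with value above $\sigma_i(i)$, turning the descent at $i$ into an ascent and producing a descent at $i+1$. These valid cells meet each horizontal strip $r<y<r+1$ in exactly one cell, so there are $k+1$ of them, say $C_0,\dots,C_k$ with $C_r$ in strip $r$. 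One checks that the identification through $(a,\sigma_i(a))$ merges $C_{\sigma_i(a)-1}$ with $C_{\sigma_i(a)}$ for $1\le a\le i$, while no identification through any other point of $P_0$ relates two valid cells. As $\sigma_i(1),\dots,\sigma_i(i)$ are distinct, these are $i$ distinct edges, each joining consecutive $C_r$'s, so they form a subgraph of the path $C_0-C_1-\cdots-C_k$, hence a forest; the number of classes --- which equals the number of admissible $\pi$ --- is therefore $(k+1)-i$.

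The count with $\Des(\pi)=\{i\}$ is strictly parallel. To keep the descent at position $i$, the new point must sit to the right of the point of $P_0$ at position $i$, and ``no new descent'' confines it to: between positions $i$ and $i+1$ with value below $\sigma_i(i+1)$; or between positions $a$ and $a+1$ with value in $(\sigma_i(a),\sigma_i(a+1))$ for some $i+1\le a\le k-1$; or to the right of position $k$ with value above $\sigma_i(k)$. Again there is exactly one valid cell per horizontal strip, and the identifications relating two valid cells are exactly those through $(a,\sigma_i(a))$ for $i+1\le a\le k$, giving $k-i$ distinct edges joining consecutive cells; the same forest argument gives $(k+1)-(k-i)=i+1$ admissible permutations.

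I expect the \emph{main obstacle} to be the case-analysis bookkeeping: one must verify, by checking where the new point can land relative to $P_0$, that the valid cells are exactly as described and hit each horizontal strip exactly once, and then pin down which of the diagonal identifications relate two valid cells. One should also record explicitly the ``same permutation iff linked by the diagonal identifications'' equivalence via the deletion/standardization fact above, since it is the ``only if'' direction there that guarantees distinct classes yield distinct permutations. Drawing the plot of $\sigma_i$ with the valid cells shaded (in the style of the figures of this section) makes both the enumeration and the diagonal identifications entirely routine.
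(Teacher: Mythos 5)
You take the same route as the paper's proof: insert one point into the standard plot of $\sigma_i$, identify the admissible cells (exactly one per horizontal strip, hence $k+1$ of them), and quotient by the diagonal identifications through the $i$ (resp.\ $k-i$) relevant points of the plot, getting $k+1-i$ (resp.\ $i+1$) permutations. Your description of the admissible cells and your forest count coincide with the paper's figure and argument, and you are more explicit than the paper about why distinct classes give distinct permutations.

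However, the ``underlying fact'' you cite is false as stated, and it fails precisely for the permutations occurring in this lemma: for $\tau = 1236457$ (obtained from $\sigma_3 = 125346$, $k=6$, $i=3$, by an admissible insertion) one has $\std(\del(\tau,1)) = \std(\del(\tau,3)) = 125346$, although the positions $1$ and $3$ are neither consecutive nor carry consecutive values; more generally, any equivalence class of three or more cells in your own computation contradicts the stated biconditional. The correct statement is that for $p<q$, $\std(\del(\tau,p)) = \std(\del(\tau,q))$ if and only if $\tau(p), \tau(p+1), \dots, \tau(q)$ are consecutive integers, increasing or decreasing by one at each step. Consequently the relation ``same permutation'' on cells is generated by identifications across a single point of $P_0$ along \emph{either} diagonal, not only the positive-slope one: for instance, for the plot of $21$, the cell above and to the left of both points and the cell below and to the right of both points both yield $321$. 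To close your argument you should therefore also rule out negative-slope identifications among admissible cells; this is easy, since one checks directly that for a negative-slope identification through a point of $P_0$ the common permutation has at least two descents, so neither cell is admissible in either count. After this repair only your positive-slope edges remain, they join cells in adjacent strips, and the forest count gives $k+1-i$ and $i+1$, exactly as in the paper.
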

\begin{lm}\label{lm:inductstep}
    Suppose that $\sigma_{i-1} = Q(i-1, t)$ for some $t > i$ and $\sigma_i = Q(i, t')$ for some $t'.$ If there exists $\pi\in S_{k+1}$ containing both $\sigma_{i-1}$ and $\sigma_i$ with $\Des(\pi) = \{i\}$, then $t' = t$ or $t' = t+1.$
    \begin{proof}
        Note that the permutations in $S_{k+1}$ containing $Q(i-1,t)$ with descent set $\{i\}$ are \begin{equation}1,\dots, i-2, z, t+1, i-1, \dots, \what z,\dots, \what{t+1}, \dots, k+1\end{equation} for $i-1\le z < t$ and \begin{equation}1, \dots, i-2, t, z, i-1, \dots,\what {t}, \dots, \what{z}, \dots, k+1\label{eq:largez}\end{equation} for $t < z \le k+1.$
        Thus, $\pi$ must be among the above permutations. In particular,
        \begin{equation}
            \pi(i) = \max(t+1,z).
        \end{equation}

        Note that 
        \begin{equation}
            \sigma_i(i)=Q(i,t')(i) = \max(i+1, t').
        \end{equation}

        When adding a new point to the standard plot of $\sigma_{i}$ to create $\pi$, this point must be added to the right of the $x = i$ line (otherwise the descent at $i$ of $\sigma_i$ becomes a descent at $i+1$ of $\pi$.) Thus, 
        \begin{equation}
            \pi(i)\in \{\sigma_i(i), \sigma_i(i)+1\},
        \end{equation}
        since the $(i, \sigma_i(i))$ point is now either the $\sigma_i(i)-$th or $(\sigma_i(i)+1)-$th point from the bottom.

        Thus, \begin{equation}\sigma_i(i) \ge \pi(i)-1= \max(t+1, z)-1 \ge t \ge i+1.\end{equation} 
        Since $\sigma_i(i) = \max(t', i+1)$ we have $\sigma_i(i) = t'\ge i+1$. If $t' = t$ we are done. Thus, we may assume $t'\ne t$ and so $\sigma_i(i) \ne t$ and thus $\sigma_i(i)\ge t+1$.
        We also have $\sigma_i(i)\le \pi(i) = \max(t+1, z)$, so if $z\le t+1$ we have $t' = \sigma_i(i) = t+1$ as desired. So we may assume $z > t+1$, which implies (see \eqref{eq:largez}) \begin{equation}\pi(i-1) = t.\label{eq:piim1}\end{equation}
        Since $t'\ge i+1,$ the permutations in $S_{k+1}$ containing $Q(i, t')$ with descent set $\{i\}$ are 
        \begin{equation}1, 2, \dots, \what{r},\dots, i, t'+1, r, \dots,\what{i}, \dots, \what{t'+1},\dots, k+1\end{equation} for $1\le r\le i$
        and $Q_{k+1}(i,t'),$ all of which violates \eqref{eq:piim1}.
        This is a contradiction.
    \end{proof}
\end{lm}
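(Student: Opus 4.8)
The plan is to determine $\pi$ almost completely by looking at its values in positions $i-1$, $i$, $i+1$, exploiting the rigidity that comes from $\pi$, $\sigma_{i-1}$, and $\sigma_i$ all having exactly one descent. Concretely, since $\pi$ contains $\sigma_{i-1}$, one obtains a plot of $\pi$ by adjoining a single point to the standard plot of $\sigma_{i-1} = Q(i-1,t)$; and since $\pi$ contains $\sigma_i$, one likewise obtains a plot of $\pi$ by adjoining a single point to the standard plot of $\sigma_i = Q(i,t')$. Each of these two operations is highly constrained, and intersecting the constraints will force $t' \in \{t, t+1\}$.

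First I would analyse the containment of $\sigma_{i-1} = Q(i-1,t)$. Since $\Des(\sigma_{i-1}) = \{i-1\}$ while $\Des(\pi) = \{i\}$, the adjoined point must be inserted to the left of column $i-1$ (to re-index the unique descent from $i-1$ to $i$) and must lie on the ``staircase'' of $Q(i-1,t)$ (so that no second descent appears). Writing this out in one-line notation yields a short explicit list of candidates for $\pi$, parametrised by a single integer $z$; in every member of the list $\pi(i) = \max(t+1, z)$, and---the key fact for the endgame---whenever $z > t+1$ one also reads off $\pi(i-1) = t$. Symmetrically, for the containment of $\sigma_i = Q(i,t')$, whose descent already sits at position $i$, the adjoined point must be inserted strictly to the right of column $i$ and on the staircase of $Q(i,t')$; hence $\pi(i) \in \{\sigma_i(i), \sigma_i(i)+1\}$, while by the definition of $Q$ one has $\sigma_i(i) = \max(t', i+1)$.

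Combining the two descriptions gives $\max(t', i+1) = \sigma_i(i) \ge \pi(i) - 1 = \max(t+1, z) - 1 \ge t \ge i+1$, which (after normalising the representation of $\sigma_i$ to avoid the ambiguity at a coincidence point) pins down $\sigma_i(i) = t'$ and squeezes $t'$ into the two-element set $\{\pi(i)-1, \pi(i)\} = \{\max(t+1,z)-1, \max(t+1,z)\}$. When $z \le t+1$ this set is exactly $\{t, t+1\}$ and we are done. The remaining case $z > t+1$ is where the work lies: there the first description forces $\pi(i-1) = t$, so to finish I would enumerate (again by adjoining one point to the right of column $i$) the permutations of $S_{k+1}$ with descent set $\{i\}$ that contain $Q(i,t')$, observe that each of them satisfies $\pi(i-1) \in \{i-1, i\}$, and conclude that $\pi(i-1) = t$ is impossible because $t > i$. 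This contradiction rules out $z > t+1$.

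The main obstacle is the bookkeeping in these two enumerations: one must pin down exactly which one-point extensions of $Q(i-1,t)$ and of $Q(i,t')$ have a single descent, track the boundary values of the insertion parameter (the smallest and largest admissible $z$), and handle the coincidence $Q(a,a) = Q(a,a+1)$. Conceptually nothing here is hard, but the positional identities $\pi(i) = \max(t+1,z)$, $\pi(i-1) = t$, and $\pi(i-1) \in \{i-1,i\}$ have to come out exactly right, which takes care.
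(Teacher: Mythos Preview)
Your proposal is correct and follows essentially the same route as the paper's proof: enumerate the one-point extensions of $\sigma_{i-1}=Q(i-1,t)$ with descent set $\{i\}$ to read off $\pi(i)=\max(t+1,z)$ and, for $z>t+1$, $\pi(i-1)=t$; combine with $\pi(i)\in\{\sigma_i(i),\sigma_i(i)+1\}$ from the $\sigma_i$ side to force $t'\in\{t,t+1\}$ when $z\le t+1$; and in the remaining case enumerate the extensions of $Q(i,t')$ to derive a contradiction with $\pi(i-1)=t$. The paper writes out the candidate permutations explicitly rather than phrasing things geometrically, and in the final step simply says the listed permutations ``violate $\pi(i-1)=t$'' where you spell out $\pi(i-1)\in\{i-1,i\}$, but the argument is the same.
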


\begin{lm}\label{lm:symmshuffle}
    If $S_{k+1}(\Pi)$ is symmetric then $\Pi$ is a partial shuffle.
    \begin{proof}
        Our strategy is to use \Cref{lm:countconone} and \Cref{lm:countcontwo} to show that there are $t_1, \dots, t_{k-1}$ such that $\sigma_i = Q(i,t_i)$ for all $i$. We then use Lemmas \ref{lm:countcontwo}, \ref{lm:countconf}, and \ref{lm:countaddnodes}  
        to show that one can choose $t_1 = \dots = t_{k-1},$ which means $\Pi$ is a partial shuffle.

        The lemma can be computationally verified for $k= 4$. Thus, we assume $k\ge 5$.

        Let $Z:=S_{k+1}\sm S_{k+1}(\Pi)$ be the set of permutations in $S_{k+1}$ containing at least one pattern in $\Pi$. Since $S_{k+1}(\Pi)$ is symmetric, $Z$ must be symmetric.

        Let 
        \begin{equation}
            Z_t := \{\pi\in Z\mid \pi\text{ contains }\sigma_t\} = S_{k+1}\sm S_{k+1}\pa{\{\sigma_t\}}
        \end{equation}

        For $i < j$, recall the notation $A(Z)_{\{i, j+1\}}$ means permutations in $Z$ with descents at $i$ and $j+1$ (and possibly elsewhere.) By abuse of notation we will write this as $A(Z)_{i, j+1}.$ Note that for each permutation in $Z$, deleting some letter and then standardizing gives a permutation in $\Pi$ which has only one descent, so a permutation in $Z$ can have at most two descents. Thus, $A(Z)_{i, j+1}$ is the set of the permutations in $Z$ with descent set exactly at $\{i,j+1\}.$
        
        Note that for $\pi\in A(Z)_{i, j+1},$ $\del(\pi, p)$ will still have two descents unless we delete one of $i, i+1, j+1, j+2.$ And if we delete either $i, i+1$ and obtain a sequence with only one descent then the descent is at $j$ (due to reindexing). Similarly, if we delete either $j+1, j+2$ and obtain a sequence with only one descent then the descent is at $i$. Thus, we have if there is $\pi\in Z_t\cap A(Z)_{i, j+1}$ then $t\in \{i, j\}.$ That is, we have $A(Z)_{i, j+1}  = A(Z)_{i, j+1}\cap (Z_i\cup Z_j),$ and thus 
        \begin{equation}
            \abs*{A(Z)_{i, j+1}} = \abs*{A(Z)_{i, j+1}\cap Z_i} + \abs*{A(Z)_{i, j+1}\cap Z_j} - \abs*{A(Z)_{i, j+1}\cap Z_i\cap Z_j}
        \end{equation}
        by inclusion-exclusion principle.

        By \Cref{lm:countconone} we have $|A(Z)_{i,j+1}\cap Z_i| = |A(Z)_{i,j+1}\cap Z_j| = k,$ so we have  
        \begin{equation}\label{eq:agij}
            \abs*{A(Z)_{i, j+1}} = 2k - \abs*{A(Z)_{i, j+1}\cap Z_i\cap Z_j}
        \end{equation}
        and thus, $\abs*{A(Z)_{i, j+1}\cap Z_i\cap Z_j}$ does not depend on $i$ and $j$, since $\abs*{A(Z)_{i, j+1}}$ for $i < j$ does not depend on $i, j$ by harmonicity of $A(Z).$

        As in \Cref{lm:countcontwo}, define \begin{equation}
                P_i:= \{\std(\del(\sigma_i, i)), \std(\del(\sigma_i, i+1))\}.
            \end{equation}

        Since $\Des(\sigma_1) = \{1\}$ we have $\sigma_1(2) < \sigma_1(3) \dots < \sigma_1(k)$, and thus $\std(\del(\pi_1, 1)) = \iota_{k-1}$. Similarly, $\std(\del(\sigma_{k-1}, k)) = \iota_{k-1}$, so $P_1\cap P_{k-1}\ne\varnothing$. By \Cref{lm:countcontwo}, $A(Z)_{1, k}\cap Z_1\cap Z_{k-1}\ne\varnothing$, and so we must have  $A(Z)_{i, j+1}\cap Z_i\cap Z_j\ne\varnothing$ for all $i < j$, so $P_i\cap P_j\ne \varnothing$ for all $i < j$ by Lemma \ref{lm:countcontwo}. 

        Note that $\Des(\std(\del(\sigma_i, i))) = \Des(\del(\sigma_i, i))$ is either $\{i-1\}$ or $\varnothing$ since we have $\sigma_i(1) < \dots < \sigma_i(i), \sigma_i(i+1) < \dots < \sigma_i(k)$. Thus, $\std(\del(\sigma_i, i))$ is either $\iota_{k-1}$ or a permutation with descent set $\{i-1\}$. Similarly, $\std(\del(\sigma_i, i+1))$ is either $\iota_{k-1}$ or a permutation with descent set $\{i\}$.
        Therefore, for any $i$, a permutation in $P_i\sm \{\iota_{k-1}\}$, if exists, has descent set $\{i-1\}$ or $\{i\}$. This means that $P_i \cap P_j \subseteq \{\iota_{k-1}\}$ for $|i-j| > 1$.

        Thus, for $i > 2$ since $P_1\cap P_i\ne \varnothing$  we have $\iota_{k-1}\in P_i$; and for $i < k-2$ since $P_i\cap P_k\ne \varnothing$ we have $\iota_{k-1}\in P_i$. Since $k \ge 5$, for any $1 \le i\le k$ we have $i > 2$ or $i < k-2$ so for any $i$ we have $\iota_{k-1}\in P_i$. In particular, we have $\sigma_i$ contains $\iota_{k-1}$ for all $i$ and $P_i \cap P_j = \{\iota_{k-1}\}$ for $|i-j| > 1$.

    \begin{casesp}
        
    \item There exists $t$ such that $\sigma_1 = Q(1,t)$ and $\sigma_{k-1}  = Q(k-1,t)$. Note that either $t > 2$ or $t < k-1$, by replacing $\Pi$ with $\Pi^{cr}$ we may assume $t > 2.$ 
        Note that there is $t_{ij}$ such that $\sigma_i = Q(i, t_{ij})$ and  $\sigma_i = Q(j, t_{ij})$ for all $i, j$ satisfying $P_i\cap P_j = \{\iota_{k-1}\}$ since otherwise we have $\abs*{A(Z)_{i, j+1}\cap Z_i\cap Z_j}= 1 < 2 \le \abs*{A(Z)_{1, k}\cap Z_1\cap Z_{k-1}}$ by Lemma \ref{lm:countcontwo}. In particular, such $t_{ij}$ exists for $j - i > 1$. By \Cref{lm:p1p2} we have either there is $t_{12}$ such that $\sigma_1 = Q(1,t_{12})$ and $\sigma_2 = Q(2,t_{12})$, or $P_1\cap P_2 = \{\iota_{k-1}\}.$ But if $P_1\cap P_2 = \{\iota_{k-1}\}$ then there is also $t_{12}$ such that $\sigma_1 = Q(1,t_{12})$ and $\sigma_2 = Q(2,t_{12}).$ By \Cref{rmk:caseworkg}, we must have $t_{1j} = t$ for all $j > 1$, so $\sigma_j = Q(j,t)$ for all $j$. Thu, $\Pi$ is the partial shuffle 
        \begin{equation}
            (1, 2, \dots, \what{t}, \dots, k)\cshuffle (t).
        \end{equation}

        \item There is no $t$ such that $\sigma_1 = Q(1,t)$ and $\sigma_{k-1}  = Q(k-1,t).$ Since $(k-1)-1>1$, we have $P_1\cap P_{k-1} = \{\iota_{k-1}\}$. Thus, $|A(Z)_{1, k}\cap Z_1\cap Z_{k-1}| =1$ by \Cref{lm:countcontwo}. Thus, $|A(Z)_{i, j+1}\cap Z_i\cap Z_{j}| =1$ for all $i< j$. Thus, for any $i < j$, there is no $t$ such that $\sigma_i = Q(i,t)$ and $\sigma_j = Q(j,t)$ by Lemma \ref{lm:countcontwo}. By \Cref{rmk:caseworkg}, we can choose $t_i$ such that $\sigma_i = Q(i, t_i)$ for each $i$, and for each $i$, $t_i$ can be chosen in either $1$ or $2$ ways. But note that the possible choices of $t_i$ for different $i$ must be disjoint. Since there are $k-1$ indices $i$ and there are $k$ total choices, at most one of them has $2$ choices. 
        Thus, either $t_1$ has $1$ choice or  $t_{k-1}$ has $1$ choice. By replacing $\Pi$ with $\Pi^{cr}$ we may assume $t_1$ has $1$ choice, so $t_1 >2$.

        Recall the notation $A(Z)_{\{i\}}$ denotes the permutations in $Z$ with a descent at $i$ and possibly elsewhere, by abuse of notation we write this as $A(Z)_i$. Note that $A(Z)_{\{i\}, [k]\sm\{i\}}$ denotes the set of permutation in $Z$ whose descent set is exactly $\{i\}.$
        Recall that a permutation in $Z$ can have at most two descents, so we have 
        \begin{align}\label{eq:m1cal}
            \abs*{A(Z)_{1}} &= |A(Z)_{\{1\}, [k]\sm \{1\}}| + \sum_{i = 2}^{k}|A(Z)_{1,i}|\notag \\
            &= |A(Z)_{\{1\}, [k]\sm \{1\}}| + |A(Z)_{1,2}| + (k-2)(2k-1),
        \end{align}
        where we used \eqref{eq:agij} together with $|A(Z)_{i, j+1}\cap Z_i\cap Z_{j}| =1$ for $i < j$ to obtain that $\abs*{A(Z)_{i,j}} = 2k-1$ for $|i-j|>1.$
 
        Note that deleting a letter from $\pi\in A(Z)_{\{1\}, [k]\sm \{1\}}$ give a sequence that either have a descent at $1$ or no descents, so $A(Z)_{\{1\}, [k]\sm \{1\}}\sbs Z_1.$ Similarly, deleting a letter from $\pi\in A(Z)_{1,2}$ give a sequence that either have a descent at $1$ or at least two descents, so $A(Z)_{1,2}\sbs Z_1.$ We thus have $|A(Z)_{\{1\}, [k]\sm \{1\}}| = 2$ and $|A(Z)_{1,2}| = k-1$ by  \Cref{lm:countaddnodes} and \Cref{lm:countconf}.

        Thus, by \eqref{eq:m1cal} we have $\abs*{A(Z)_1} = 2 + (k-1) + (k-2)(2k-1) = 2k^2 - 4k +3$.

        On the other hand for $1 < i < k$ we have 
        \begin{align}
            |A(Z)_{i}| &= |A(Z)_{\{i\}, [k]\sm \{i\}}| + |A(Z)_{i-1,i}| + |A(Z)_{i, i+1}| + \sum_{|j-i|> 1}A(Z)_{i,j}\notag\\
            &= |A(Z)_{\{i\}, [k]\sm \{i\}}| + 2(k-1) + (k-3)(2k-1)\\
            &= |A(Z)_{\{i\}, [k]\sm \{i\}}| + 2k^2 - 5k +1\notag
        \end{align}
        where we used harmonicity of $A(Z)$ to obtain that $|A(Z)_{i,i+1}| = |A(Z)_{1,2}| = k-1.$

        Since $|A(Z)_{i}| = |A(Z)_{1}|$ we have $|A(Z)_{\{i\}, [k]\sm\{i\}}| = k+2$ for all $1 < i < k$. 
        Note that deleting an element from $\pi\in A(Z)_{\{i\}, [k]\sm \{i\}}$ gives a sequence with descent at $i$ or at $i-1$ or no descent, so we have $A(Z)_{\{i\}, [k]\sm\{i\}}\sbs Z_{i-1}\cup Z_i,$ so
        \begin{align}\label{eq:pii}
            k+ 2 &= |A(Z)_{\{i\}, [k]\sm \{i\}}|  = |A(Z)_{\{i\}, [k]\sm \{i\}}\cap Z_{i-1}| + \abs*{A(Z)_{\{i\}, [k]\sm \{i\}}\cap Z_{i}} \notag\\
            &- \abs*{A(Z)_{\{i\}, [k]\sm \{i\}}\cap Z_{i-1}\cap Z_i}
        \end{align}

        By Lemma \ref{lm:countaddnodes} we have $\abs*{A(Z)_{\{i\}, [k]\sm \{i\}}\cap Z_{i-1}} = k+2 - i$ and $\abs*{A(Z)_{\{i\}, [k]\sm \{i\}}\cap Z_{i}} = i+1$. Thus, by \eqref{eq:pii} we have $\abs*{A(Z)_{\{i\}, [k]\sm \{i\}}\cap Z_{i-1}\cap Z_i} = 1$ for $1 < i < k$.

        Note that $\sigma_1 = Q(1,t_1)$ for some $t_1 > 2$.
        If $\sigma_{i-1} = Q(i-1,t_1+i-2)$ for some $1 < i < k$ we will have that $\sigma_i = Q(i,t_1+i-1)$ by Lemma \ref{lm:inductstep}. 
        Thus, by induction we have $\sigma_{k-1} = Q(k-1,k+t_1-2)$, which is a contradiction since $k+t_1-2>k$.\qedhere
    \end{casesp}
    \end{proof}
\end{lm}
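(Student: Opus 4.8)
The plan is to pass to the complement, observe that it is symmetric (hence its associated set system is harmonic), and squeeze rigid constraints on the $\sigma_i$ out of harmonicity by counting permutations with prescribed small descent sets. Concretely, let $Z := S_{k+1}\sm S_{k+1}(\Pi)$ be the set of permutations in $S_{k+1}$ containing at least one pattern of $\Pi$; since $S_{k+1}(\Pi)$ is symmetric, $Z$ is symmetric, so $A(Z)$ is harmonic by \Cref{prop:symm2harset}. The crucial structural observation is that deleting a suitable letter of $w\in Z$ and standardizing yields some $\sigma_i$, which has a single descent; hence every $w\in Z$ has at most two descents, and a permutation of $Z$ with descent set exactly $\{i,j+1\}$ ($i<j$) must contain $\sigma_i$ or $\sigma_j$. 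Writing $Z_t := S_{k+1}\sm S_{k+1}(\{\sigma_t\})$ and $A(Z)_{i,j+1}$ for the set of permutations in $Z$ with descent set exactly $\{i,j+1\}$, inclusion--exclusion together with $|A(Z)_{i,j+1}\cap Z_i| = |A(Z)_{i,j+1}\cap Z_j| = k$ (from \Cref{lm:countconone}) gives $|A(Z)_{i,j+1}| = 2k - |A(Z)_{i,j+1}\cap Z_i\cap Z_j|$, and harmonicity of $A(Z)$ forces the left side, hence the common value $N := |A(Z)_{i,j+1}\cap Z_i\cap Z_j|$, to be independent of $i<j$. (The case $k=4$ is finite, so I would verify it by computation and assume $k\ge 5$.)

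First I would determine the shape of each $\sigma_i$. By \Cref{lm:countcontwo}, $N$ is controlled by the sets $P_i := \{\std(\del(\sigma_i,i)),\std(\del(\sigma_i,i+1))\}$: $N=0$ iff $P_i\cap P_j=\varnothing$, and $N\ge 1$ otherwise. Since $\Des(\sigma_1)=\{1\}$ and $\Des(\sigma_{k-1})=\{k-1\}$, both $\std(\del(\sigma_1,1))$ and $\std(\del(\sigma_{k-1},k))$ equal $\iota_{k-1}$, so $\iota_{k-1}\in P_1\cap P_{k-1}$ and $N\ge 1$; hence $P_i\cap P_j\ne\varnothing$ for all $i<j$. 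A non-identity element of $P_i$ has descent set $\{i-1\}$ or $\{i\}$, so $P_i\cap P_j\sbs\{\iota_{k-1}\}$ whenever $|i-j|>1$; combining the two facts (and using $k\ge5$, so every index satisfies $i>2$ or $i<k-2$) gives $\iota_{k-1}\in P_i$ for all $i$. Thus each $\sigma_i$ contains $\iota_{k-1}$, so $\sigma_i=Q(i,t_i)$ for some $t_i$ by \Cref{rmk:caseworkg}, and $P_i\cap P_j=\{\iota_{k-1}\}$ for $|i-j|>1$.

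Next I would split on the value of $N$, which \Cref{lm:countcontwo} detects via whether $\sigma_1$ and $\sigma_{k-1}$ share a common shuffle parameter. If there is a $t$ with $\sigma_1=Q(1,t)$ and $\sigma_{k-1}=Q(k-1,t)$, then after possibly replacing $\Pi$ by $\Pi^{cr}$ we may assume $t>2$, and \Cref{lm:countcontwo}(3) gives $N\ge 2$. Consequently, for every nonadjacent pair $i<j$ there must be a common parameter $t_{ij}$ with $\sigma_i=Q(i,t_{ij})$, $\sigma_j=Q(j,t_{ij})$ (otherwise $N=1$ by \Cref{lm:countcontwo}(2)); \Cref{rmk:caseworkg} then forces $t_{1j}=t$ for all $j\ge3$, and \Cref{lm:p1p2} extends this to the adjacent pair $\sigma_1,\sigma_2$. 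Hence $\sigma_j=Q(j,t)$ for all $j$, i.e. $\Pi=(1,2,\dots,\what t,\dots,k)\cshuffle(t)$ is a partial shuffle.

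The main obstacle is ruling out the remaining case $N=1$, where $\sigma_1$ and $\sigma_{k-1}$ share no parameter. Then $N=1$ forces that no two $\sigma_i$'s share a parameter, so the parameters $t_i$ are pairwise distinct; since the only ambiguity is $Q(i,i)=Q(i,i+1)$ and there are $k-1$ indices but $k$ available values, at most one index admits two parameters, and after possibly passing to $\Pi^{cr}$ we may take $t_1\ge3$. I would then carry out a careful double count of $|A(Z)_i|$ for all $i$, decomposing each by the (size one or two) descent set of the sequence obtained after deleting the ``paired'' letter, and using $|A(Z)_{i,j}|=2k-1$ for $|i-j|>1$ along with the exact counts $|A(Z)_{1,2}|=k-1$ (\Cref{lm:countconf}), $|A(Z)_{\{1\},[k]\sm\{1\}}|=2$ (\Cref{lm:countaddnodes}), and $|A(Z)_{i,i+1}|=|A(Z)_{1,2}|=k-1$ (harmonicity). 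This yields $|A(Z)_1|=2k^2-4k+3$, and comparing with $|A(Z)_i|$ for $1<i<k$ via harmonicity forces $|A(Z)_{\{i\},[k]\sm\{i\}}\cap Z_{i-1}\cap Z_i|=1$. Hence for each $1<i<k$ there is a $\pi\in S_{k+1}$ containing both $\sigma_{i-1}$ and $\sigma_i$ with $\Des(\pi)=\{i\}$, so \Cref{lm:inductstep} gives $\sigma_i=Q(i,t_{i-1}+1)$, the alternative $t_i=t_{i-1}$ being excluded by distinctness of parameters. Inducting from $\sigma_1=Q(1,t_1)$ with $t_1\ge3$ produces $\sigma_{k-1}=Q(k-1,t_1+k-2)$ with $t_1+k-2>k$, which is out of range --- a contradiction. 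I expect the elementary but delicate bookkeeping in this last count (tracking how often each $A(Z)$-value is over- or undercounted, and keeping adjacent and nonadjacent contributions separate) to be the most error-prone step, together with the routine computational verification of the $k=4$ case needed because the $k\ge5$ hypotheses are used repeatedly.
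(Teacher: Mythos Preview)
Your proposal is correct and follows essentially the same approach as the paper's proof: the same passage to the complement $Z$, the same inclusion--exclusion formula $|A(Z)_{i,j+1}|=2k-N$ via \Cref{lm:countconone}, the same deduction that $\iota_{k-1}\in P_i$ for all $i$, the same case split on whether $\sigma_1$ and $\sigma_{k-1}$ share a parameter, and the same contradiction in the $N=1$ case via the $|A(Z)_i|$ count and \Cref{lm:inductstep}. The only points worth double-checking in a full write-up are that the hypothesis $t_{i-1}>i$ of \Cref{lm:inductstep} is maintained inductively (it is, since $t_1\ge 3$ and $t_i=t_{i-1}+1$), and the bookkeeping you flag in the $|A(Z)_i|$ computation.
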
 

\begin{proof}[Proof of Theorem \ref{thm:smallsaset}]
    Note that $(2)\implies (3)$ follows from \cite[Theorem 4.1]{bloomsagan}, \cite[Proposition 2.6]{HZP} and \Cref{cor:crpat}, and $(3)$ clearly implies $(1)$. Thus, it suffices to show $(1)\implies (2)$.

    Since $\Pi$ is symmetrically avoided, we have $S_k(\Pi) = S_k\sm \Pi$ is symmetric. But $S_k$ is symmetric, so $\Pi$ is symmetric. Thus, by Theorem \ref{thm:symmk} we have $\Pi$ satisfies one of the four conditions (a), (b), (c), (d). If it satisfies (a) then we are done. There are only finitely many sets of permutations satisfying (c) or (d), and one can computationally verify that none of these are symmetrically avoided. (All but $1$ of the sets $\Pi$ satisfying the first two cases (c) has $S_5(\Pi)$ or $S_6(\Pi)$ not symmetric, and the remaining set has $S_7(\Pi)$ not symmetric; and all except for $8$ of the sets $\Pi $ satisfying the first case of (d) has $S_7(\Pi)$ not symmetric, the remaining $8$ has $S_8(\Pi)$ not symmetric.)

    Thus, we only need to consider the case $\Pi$ satisfies (b). By taking complement, without loss of generality, assume $S =\{\sigma_1, \dots, \sigma_{k-1}\}$ with $\Des(\sigma_{i}) = \{i\}$. Then if $k\ge 4$ by Lemma \ref{lm:symmshuffle}, we have $\Pi$ is a partial shuffle. For $k = 3$ all set of the form $\{\sigma_1,\sigma_2\}$ with $\Des(\sigma_i) = \{i\}$ are partial shuffles or their complement, so the conclusion holds trivially. For $k\le 2$ we have $S_k = \{\iota_k, \delta_k\}$ so $\Pi\subseteq S_k = \{\iota_k, \delta_k\}$.
\end{proof}

\bibliographystyle{amsalpha}
\bibliography{Citations}

@inproceedings{S15,
  author       = {Bruce E. Sagan},
  title        = {Pattern avoidance and quasisymmetric functions},
  booktitle    = {13th International Conference on Permutation Patterns},
  year         = {2015},
  address      = {London, England},
  month        = jun # { } # 20,
  note         = {\url{https://users.math.msu.edu/users/bsagan/Slides/PaqPPh.pdf}},
}

@article {M25,
    AUTHOR = {Marmor, Avichai},
     TITLE = {Tight lower bound for pattern avoidance and symmetric
              functions},
   JOURNAL = {Israel J. Math.},
  FJOURNAL = {Israel Journal of Mathematics},
    VOLUME = {270},
      YEAR = {2025},
    NUMBER = {1},
     PAGES = {337--356},
      ISSN = {0021-2172,1565-8511},
   MRCLASS = {05},
  MRNUMBER = {4999802},
       DOI = {10.1007/s11856-025-2783-5},
       URL = {https://doi.org/10.1007/s11856-025-2783-5},
}

@article {ER17,
    AUTHOR = {Elizalde, Sergi and Roichman, Yuval},
     TITLE = {Schur-positive sets of permutations via products and grid
              classes},
   JOURNAL = {J. Algebraic Combin.},
  FJOURNAL = {Journal of Algebraic Combinatorics. An International Journal},
    VOLUME = {45},
      YEAR = {2017},
    NUMBER = {2},
     PAGES = {363--405},
      ISSN = {0925-9899,1572-9192},
   MRCLASS = {05E05 (05A05)},
  MRNUMBER = {3604061},
MRREVIEWER = {Michael\ Xinxin\ Zhong},
       DOI = {10.1007/s10801-016-0710-x},
       URL = {https://doi.org/10.1007/s10801-016-0710-x},
}

@book {stanley,
    AUTHOR = {Stanley, Richard P.},
     TITLE = {Enumerative combinatorics. {V}ol. 2},
    SERIES = {Cambridge Studies in Advanced Mathematics},
    VOLUME = {208},
   EDITION = {Second},
      NOTE = {With an appendix by Sergey Fomin},
 PUBLISHER = {Cambridge University Press, Cambridge},
      YEAR = {[2024] \copyright 2024},
     PAGES = {xvi+783},
      ISBN = {978-1-009-26249-1; 978-1-009-26248-4},
   MRCLASS = {05-02 (05A15 05E05 05E10 68R05)},
  MRNUMBER = {4621625},
MRREVIEWER = {Timothy\ Y.\ Chow},
}

@article {er,
    AUTHOR = {Elizalde, Sergi and Roichman, Yuval},
     TITLE = {Arc permutations},
   JOURNAL = {J. Algebraic Combin.},
  FJOURNAL = {Journal of Algebraic Combinatorics. An International Journal},
    VOLUME = {39},
      YEAR = {2014},
    NUMBER = {2},
     PAGES = {301--334},
      ISSN = {0925-9899,1572-9192},
   MRCLASS = {05A05 (05A15)},
  MRNUMBER = {3159254},
MRREVIEWER = {Yu\ Chen},
       DOI = {10.1007/s10801-013-0449-6},
       URL = {https://doi.org/10.1007/s10801-013-0449-6},
}

@article {ggc,
    AUTHOR = {Albert, Michael H. and Atkinson, M. D. and Bouvel, Mathilde
              and Ru\v skuc, Nik and Vatter, Vincent},
     TITLE = {Geometric grid classes of permutations},
   JOURNAL = {Trans. Amer. Math. Soc.},
  FJOURNAL = {Transactions of the American Mathematical Society},
    VOLUME = {365},
      YEAR = {2013},
    NUMBER = {11},
     PAGES = {5859--5881},
      ISSN = {0002-9947,1088-6850},
   MRCLASS = {05A05 (05A15)},
  MRNUMBER = {3091268},
MRREVIEWER = {Antonio\ Bernini},
       DOI = {10.1090/S0002-9947-2013-05804-7},
       URL = {https://doi.org/10.1090/S0002-9947-2013-05804-7},
}

@article {bloomsagan,
    AUTHOR = {Bloom, Jonathan S. and Sagan, Bruce E.},
     TITLE = {Revisiting pattern avoidance and quasisymmetric functions},
   JOURNAL = {Ann. Comb.},
  FJOURNAL = {Annals of Combinatorics},
    VOLUME = {24},
      YEAR = {2020},
    NUMBER = {2},
     PAGES = {337--361},
      ISSN = {0218-0006,0219-3094},
   MRCLASS = {05E05 (05A05)},
  MRNUMBER = {4110402},
MRREVIEWER = {Yan\ Zhuang},
       DOI = {10.1007/s00026-020-00492-6},
       URL = {https://doi.org/10.1007/s00026-020-00492-6},
}

@article {gesselreutenauer,
    AUTHOR = {Gessel, Ira M. and Reutenauer, Christophe},
     TITLE = {Counting permutations with given cycle structure and descent
              set},
   JOURNAL = {J. Combin. Theory Ser. A},
  FJOURNAL = {Journal of Combinatorial Theory. Series A},
    VOLUME = {64},
      YEAR = {1993},
    NUMBER = {2},
     PAGES = {189--215},
      ISSN = {0097-3165,1096-0899},
   MRCLASS = {05A15 (05E05)},
  MRNUMBER = {1245159},
       DOI = {10.1016/0097-3165(93)90095-P},
       URL = {https://doi.org/10.1016/0097-3165(93)90095-P},
}

@article {HZP,
    AUTHOR = {Hamaker, Zachary and Pawlowski, Brendan and Sagan, Bruce E.},
     TITLE = {Pattern avoidance and quasisymmetric functions},
   JOURNAL = {Algebr. Comb.},
  FJOURNAL = {Algebraic Combinatorics},
    VOLUME = {3},
      YEAR = {2020},
    NUMBER = {2},
     PAGES = {365--388},
      ISSN = {2589-5486},
   MRCLASS = {05E05 (05E16)},
  MRNUMBER = {4099000},
MRREVIEWER = {Markus\ E.\ Fulmek},
       DOI = {10.5802/alco.96},
       URL = {https://doi.org/10.5802/alco.96},
}

@incollection {gessel,
    AUTHOR = {Gessel, Ira M.},
     TITLE = {Multipartite {$P$}-partitions and inner products of skew
              {S}chur functions},
 BOOKTITLE = {Combinatorics and algebra ({B}oulder, {C}olo., 1983)},
    SERIES = {Contemp. Math.},
    VOLUME = {34},
     PAGES = {289--317},
 PUBLISHER = {Amer. Math. Soc., Providence, RI},
      YEAR = {1984},
      ISBN = {0-8218-5029-6},
   MRCLASS = {05A17 (20C30)},
  MRNUMBER = {777705},
MRREVIEWER = {J.\ D\'esarm\'enien},
       DOI = {10.1090/conm/034/777705},
       URL = {https://doi.org/10.1090/conm/034/777705},
}

\end{document}